\documentclass[11pt]{article}

\usepackage{tikz}
\usepackage{adjustbox}
\usepackage{subcaption}
\usepackage{hyperref}
\usepackage[normalem]{ulem}
\usepackage{bm}
\usepackage{amsmath}
\usepackage{amssymb}
\usepackage{amsthm}
\usepackage{mathabx}
\usepackage{amscd}
\usepackage{amsfonts}
\usepackage{graphicx}
\usepackage{fancyhdr}
\usepackage{color}
\usepackage{enumerate}
\usepackage{psfrag} 
\usepackage{epsfig}
\usepackage{epstopdf}
\usepackage{amsbsy}
\usepackage{latexsym}
\usepackage{moreverb}                      
\usepackage{textcomp}
\usepackage{dsfont}
\usepackage{algorithm}
\usepackage{multicol}
\usepackage{algorithmic}
\usepackage{enumitem}

\usepackage[inkscapelatex=false]{svg}
\usepackage{float}

\newtheorem{proposition}{Proposition}
\newtheorem{corollary}{Corollary}
\newtheorem{remark}{Remark}

\newtheorem{theorem}{Theorem}

\usepackage[top=2.8cm,bottom=2.8cm,left=2.5cm,right=2.5cm]{geometry}
\usepackage[T1]{fontenc}
\usepackage[utf8]{inputenc}

\usepackage[affil-it]{authblk} 
\usepackage{etoolbox}
\usepackage{lmodern}

\makeatletter
\patchcmd{\@maketitle}{\LARGE \@title}{\fontsize{18}{19.2}\selectfont\@title}{}{}
\makeatother

\usepackage{sectsty}
\sectionfont{\fontsize{13}{15}\selectfont}
\subsectionfont{\fontsize{12}{15}\selectfont}

\begin{document}

\title{Symmetrized Sinkhorn-Gibbs Inference\\ for Oscillatory Inverse Problems}
\author[1,2]{Gabriel Huerta\thanks{jghuert@sandia.gov, ghuerta@unm.edu}}
\author[1]{Mohammad Motamed\thanks{motamed@unm.edu}}
\affil[1]{Department of Mathematics and Statistics, University of New Mexico, Albuquerque, NM 87131, USA}
\affil[2]{Sandia National Laboratories, Albuquerque, NM 87123, USA}

\date{}

\maketitle

\hrule

\medskip

\noindent
{\bf Abstract.} 
Oscillatory inverse problems often exhibit highly nonconvex discrepancy landscapes due to signal misalignment and cycle-skipping phenomena, posing significant challenges for uncertainty quantification. While Gibbs posteriors provide a flexible framework for incorporating problem-specific discrepancy measures, 
their performance depends strongly on the empirical risk landscape induced by the chosen discrepancy measure. 
Optimal transport accounts for the spatial and temporal structure of the underlying feature domain when comparing oscillatory signals. However, the direct application of classical optimal transport is precluded because observed and predicted signals are typically signed and therefore do not satisfy the positivity requirements of classical transport formulations. We introduce a symmetrized Sinkhorn-Gibbs inference framework for oscillatory inverse problems. The proposed approach combines a normalization procedure for signed signals with a symmetrized Sinkhorn loss that exploits complementary transport information from the normalized signals and their normalized negations. The resulting loss is incorporated into the Gibbs posterior framework, yielding a Gibbs inference methodology tailored to oscillatory data. We establish smoothness properties of the proposed loss, prove well-definedness of the resulting Gibbs posterior, derive robustness guarantees, and develop an adaptive sampling strategy for posterior computation. Numerical experiments demonstrate less oscillatory empirical risk landscapes with fewer spurious local minima, more accurate posterior inference, greater robustness to observational noise, and improved population-level recovery than Gibbs posteriors based on Euclidean and trace-wise Wasserstein losses.

\medskip
\noindent
{\bf Keywords.} Inverse problems,
Gibbs posterior,
Optimal transport,
Sinkhorn divergence,
Uncertainty quantification,
Oscillatory signals,
Cycle skipping,
Bayesian inference.

\bigskip
\hrule

\medskip
\medskip


\section{Introduction}
\label{sec:intro}

Inverse problems arise across science and engineering, where the objective is to infer unknown model parameters from indirect and often noisy observations generated by a physical system. Such problems appear in applications including seismic imaging, wave propagation, medical imaging, and quantum characterization, and are frequently formulated through parameterized forward models governed by systems of ordinary or partial differential equations. In many settings, uncertainty quantification is required in addition to point estimation, motivating Bayesian approaches that characterize probability distributions over the unknown parameters rather than just producing a single estimate. Bayesian inverse problems have been extensively studied in both finite- and infinite-dimensional settings; see, e.g., \cite{Stuart:10,Dashti_Stuart:16,Sullivan:2017}.

A persistent challenge in many inverse problems is the presence of highly oscillatory data. In wave-based applications, small perturbations of the model parameters may induce substantial shifts in the locations of peaks, troughs, and wavefronts. As a consequence, discrepancy measures based on pointwise comparisons often produce highly nonconvex objective functions containing numerous local minima. This phenomenon, arising from temporal and spatial misalignment between observed and predicted signals and commonly referred to as \emph{cycle skipping} in the seismic inversion literature, has long been recognized as a major obstacle in seismic inversion and related wave-propagation problems; see, e.g., \cite{Virieux_Operto:09}. The resulting nonconvexity affects not only optimization algorithms but also uncertainty quantification procedures, since the empirical risk landscape induced by the discrepancy measure strongly influences the resulting posterior distribution.

Gibbs posteriors provide a flexible alternative to traditional Bayesian inference by replacing the likelihood function in Bayes' theorem with a user-specified empirical risk \cite{Bissiri_etal:2016}. Rather than requiring a complete statistical model for the observational noise, Gibbsian inference combines prior information with a loss function that measures the discrepancy between observed and predicted data. This construction has attracted increasing attention in settings where likelihood models are difficult to specify or potentially misspecified, while still retaining many of the computational and conceptual advantages of Bayesian inference \cite{SyringMartin2019,ZafarNicholls2024,LeeLiuNicholls2025}. 
The performance of a Gibbs posterior depends strongly on the empirical risk landscape induced by the chosen loss, including its nonconvexity, local minima, and sensitivity to signal misalignment, making the design of robust and informative discrepancy measures a central problem.

Optimal transport has emerged as a powerful framework for comparing probability distributions by accounting for distances on the underlying feature domain. In inverse problems involving oscillatory signals, transport-based discrepancies often exhibit substantially more favorable optimization landscapes than traditional Euclidean losses \cite{Engquist_Froese:14,Yang_etal:18,Engquist_Yang:19,Yang_Engquist:18,Engquist_Yang:19b,Motamed_Appelo:19}. In particular, optimal transport can mitigate cycle-skipping effects by accounting for the spatial and temporal locations at which signal values occur, rather than comparing amplitudes pointwise. However, classical Wasserstein distances become computationally expensive when the underlying feature domain is multidimensional, as the cost of solving the associated optimal transport problem grows rapidly with dimension. Consequently, previous transport-based approaches to oscillatory inverse problems have typically applied Wasserstein distances only along the temporal dimension and then aggregated the resulting discrepancies across spatial locations using Euclidean norms \cite{Motamed_Appelo:19,DunlopYang2022}. While computationally efficient, this hybrid construction reintroduces the limitations of Euclidean comparisons in the remaining feature dimensions. Entropic regularization addresses this limitation through the Sinkhorn divergence, making multidimensional transport practical while preserving transport information across all feature dimensions \cite{Cuturi:13,Solomon_etal:15,Motamed_Sinkhorn:20}. Nevertheless, the application of transport-based discrepancies to oscillatory inverse problems remains challenging because observed and predicted signals generally do not satisfy the assumptions required by classical optimal transport.

Classical optimal transport compares probability measures and therefore requires inputs to be nonnegative and normalized. Observed and predicted signals in scientific applications, however, are often signed, oscillatory, and not naturally mass preserving. This issue has motivated a variety of normalization strategies that transform oscillatory signals into probability measures prior to the application of transport-based distances \cite{Engquist_Froese:14,Yang_etal:18,FrenchOT3,Engquist_Yang:19,Yang_Engquist:18,Engquist_Yang:19b,Motamed_Appelo:19}. While these approaches enable the use of optimal transport, a single nonlinear normalization may suppress information associated with one phase of a signed oscillatory signal. Moreover, normalization can alter the favorable convexity properties of transport-based discrepancies, leading to additional nonconvexity in the resulting empirical risk landscape. These limitations motivate a construction that combines complementary normalized representations while recovering favorable asymptotic convexity behavior.

The objective of this work is to develop a transport-based Gibbsian inference framework specifically tailored to oscillatory inverse problems. We introduce a symmetrized Sinkhorn-Gibbs posterior that combines a normalization procedure for signed signals with a symmetrized Sinkhorn loss constructed from the normalized signals and their normalized negations. Because normalization is nonlinear, the normalized representations of a signal and its negation contain complementary information about its positive and negative phases. By combining these representations, the proposed loss reduces the information loss associated with a single normalization and recovers favorable convexity behavior in the asymptotic regime. 
Together with multidimensional Sinkhorn transport, this construction yields empirical risk landscapes with fewer spurious oscillations and competing local minima, leading to more accurate posterior inference. The resulting framework remains computationally tractable through entropic regularization and can be integrated directly into standard Gibbs posterior methodologies.

The main contributions of this work are summarized as follows.

\begin{itemize}

\item {\bf Contribution 1:}
We introduce a symmetrized Sinkhorn-Gibbs inference framework for inverse problems with oscillatory signals. The framework is independent of the particular forward model and can be coupled with physics-based simulators, reduced-order models, or data-driven surrogates without modification. By combining Gibbsian inference with entropic optimal transport, it performs transport comparisons across the full feature domain, rather than only along individual traces as in previous transport-based approaches.

\item {\bf Contribution 2:}
We develop a normalization and symmetrization strategy that enables transport-based inference for signed oscillatory data. The construction transforms oscillatory signals into probability measures while combining complementary normalized representations to preserve information from both phases of the oscillation.

\item {\bf Contribution 3:}
We establish theoretical properties of the proposed framework, including smoothness of the normalized Sinkhorn divergence, asymptotic convexity and well-definedness of the resulting Gibbs posterior, and robustness of the posterior distribution with respect to perturbations in both the observed data and the forward model.

\item {\bf Contribution 4:}
Through a series of challenging seismic inversion benchmarks, we demonstrate that the proposed framework yields empirical risk landscapes with fewer spurious oscillations and competing local minima, more accurate posterior inference, greater robustness to observational noise, and improved population-level recovery than Gibbs posteriors based on Euclidean and trace-wise Wasserstein losses.

\end{itemize}

The remainder of the paper is organized as follows. Section~\ref{sec:background} introduces the inverse problem setting, reviews Bayesian and Gibbsian inference, and develops the transport-based framework underlying the proposed methodology. Section~\ref{sec:inf_OT} presents the construction of the symmetrized Sinkhorn-Gibbs posterior together with an adaptive sampling strategy for jointly inferring model parameters and the inverse temperature, a learning rate in the Gibbs posterior framework. Section~\ref{sec:analysis} establishes theoretical properties of the proposed framework, including smoothness, well-definedness, convexity-related behavior, and posterior robustness. Section~\ref{sec:numerical_exp} presents numerical experiments illustrating the performance of the method on challenging oscillatory inverse problems. 
Finally, Section~\ref{sec:conclusions} summarizes the main findings and discusses directions for future research.

\section{Problem Statement and Background}
\label{sec:background}

In this section, we formulate the inverse problem of recovering model parameters from noisy measurements supported on a compact Euclidean domain, where observations arise from a parametric forward model, such as systems of ordinary or partial differential equations (ODEs/PDEs) depending on \(n_p\) unknowns. We present both Bayesian and Gibbsian inference frameworks for this problem, highlighting their respective modeling assumptions and motivations. Particular attention is given to the role of the loss function in Gibbsian inference and the use of transport-based dissimilarity measures for comparing oscillatory data. We also establish the connection between continuous and discrete representations of the inverse problem, providing the theoretical and computational foundations for the symmetrized Sinkhorn-Gibbs framework developed in the following sections.

\subsection{Inverse Problem Setup}
\label{sec:inv_setup}

Let $X \subset \mathbb{R}^{n_d}$ be a compact \emph{feature} domain, equipped with the 
Lebesgue measure, where $n_d \in \mathbb{N}$ denotes the ambient dimension. In many 
applications, including those considered in this paper, $X$ represents a physical 
spatial and/or temporal domain in one to four dimensions ($n_d=1,2,3,4$), such as a 
purely temporal interval, a static spatial region, a spatial region evolving over time, 
or a multidimensional grid arising in experimental measurements.

We define the function space $\mathcal{F}(X)$ as the set of bounded, real-valued 
functions $h$ on $X$ with zero mean:
\begin{equation}\label{FX1}
\mathcal{F}(X) := \left\{ h: X \rightarrow {\mathbb R};\ h \in L^{\infty}(X) 
\,\middle|\, \int_X h(\mathbf{x})\, d\mathbf{x} = 0 \right\}.
\end{equation}

Consider a parametric operator
\[
f : \Theta \longrightarrow \mathcal{F}(X),
\]
representing a \emph{forward model} that maps model parameters $\boldsymbol{\theta} \in \Theta \subset 
\mathbb{R}^{n_p}$, with $\Theta$ a compact parameter space, to the function space \eqref{FX1}. The forward model may arise from an ODE or PDE system, a computational surrogate, a reduced-order model, or any other mechanism that provides evaluations of the parameter-to-signal map. For each $\boldsymbol{\theta} \in \Theta$, the forward model produces a 
function $f(\boldsymbol{\theta}) \in \mathcal{F}(X)$, with pointwise evaluations 
$f(\boldsymbol{\theta})(\mathbf{x})$ for every $\mathbf{x} \in X$. We retain the 
operator notation $f(\boldsymbol{\theta})(\mathbf{x})$ to emphasize that $f$ first maps 
parameters to a function on $X$, which is then evaluated at $\mathbf{x}$.

We distinguish the forward model from the experimentally measured data. We introduce a 
measured data field
\[
    g : X \to \mathbb{R}, \qquad g \in \mathcal{F}(X),
\]
which represents the observed data obtained from the experiment. In practice, $g$ is only 
observed at finitely many locations $\{\mathbf{x}_i\}_{i=1}^n \in X$, yielding the 
discrete observations $\{ g(\mathbf{x}_i) \}_{i=1}^n$.

Given the forward map $f$ and these observations, the inverse problem consists of inferring information about the model parameters $\boldsymbol{\theta}$ such that 
the forward-model output $f(\boldsymbol{\theta})$ matches the measured field $g$ at the 
sampled points, in an appropriate sense (exactly in the noise-free case, or 
approximately in the presence of measurement error).

Typical datasets of interest include: (i) wavefield or seismic data supported on spatiotemporal grids in two to four dimensions, obtained from seismograms; and (ii) one- or two-dimensional measurements of state populations in quantum devices, acquired through experiments such as Rabi or Ramsey protocols. In all cases, the data can be viewed as $n$ measured values $\{ g(\mathbf{x}_i) \}_{i=1}^n$ sampled at $n$ grid points $\{\mathbf{x}_i\}_{i=1}^n$ in the compact domain $X$.

Many inverse problems arising from such datasets involve highly oscillatory signals, which often give rise to challenging optimization and inference landscapes. As a result, the choice of discrepancy measure used to compare observed data with model predictions can have a significant impact on the quality of the resulting parameter estimates and uncertainty quantification.

\begin{remark}
In an idealized, noise-free setting, one may assume the existence of a 
$\theta^\ast \in \Theta$ such that $g = f(\theta^\ast)$. A common noisy variant is the 
additive-noise model $g(\mathbf{x}_i) = f(\theta^\ast)(\mathbf{x}_i) + \varepsilon_i$. 
Our general formulation does not require either assumption.
\end{remark}

\subsection{Continuous vs. Discrete Representations}

Our analysis and algorithms involve two distinct yet complementary perspectives on the 
data: the continuous and discrete settings.

In the \emph{continuous} setting, measured and simulated data are modeled as functions 
or probability measures over the compact domain $X$. These induce continuous measures 
$\mu_f$ and $\mu_g$, which serve as the primary objects of study in optimal transport. 
This formulation is crucial for establishing rigorous stability and well-posedness 
properties, as developed in Section~\ref{sec:analysis}.

In contrast, in practical computations we only have access to data at finitely many 
locations $\{\mathbf{x}_i\}_{i=1}^n \subset X$. This yields empirical measures $F_n$ and 
$G_n$ supported on point clouds or a discrete set of points, represented by probability vectors $\mathbf{f}_n$ and 
$\mathbf{g}_n$ with positive entries summing to one. Algorithms for optimal transport, 
including Sinkhorn’s algorithm, operate in this discrete setting. We describe these 
computational aspects, along with MCMC methods for sampling from the Sinkhorn–Gibbs 
posterior, in detail in Section~\ref{sec:inf_OT}.

By explicitly connecting the continuous and discrete representations, we bridge the gap 
between theoretical guarantees and computational feasibility. This dual perspective 
underpins our entire framework, allowing us to develop methods that are both robust in 
theory and efficient in practice.

\subsection{Bayesian and Gibbsian Inference}

\paragraph{Bayesian inference.}
The classical Bayesian approach to inverse problems \cite{Kaipo_Somersalo:05,Stuart:10} models the unknown parameters $\boldsymbol{\theta}$ as random variables with a prior density $\pi(\boldsymbol{\theta})$. Given a statistical model for the data, specified through a likelihood $\pi(\mathbf{g}_n | \boldsymbol{\theta})$, Bayes' theorem yields the posterior density 
$$
\pi(\boldsymbol{\theta} | \mathbf{g}_n) = \frac{\pi(\mathbf{g}_n | \boldsymbol{\theta}) \, \pi(\boldsymbol{\theta}) } {\int_\Theta \pi(\mathbf{g}_n | \boldsymbol{\theta}) \, \pi(\boldsymbol{\theta}) \, d\boldsymbol{\theta}},
$$
where $\mathbf{g}_n = \{ g(\mathbf{x}_i) \}_{i=1}^n$ denotes the measurements. Once a prior and likelihood are specified, well-established computational methods exist for estimating the posterior, including Markov Chain Monte Carlo (MCMC) and variational inference techniques \cite{VI:17}.

In practice, however, specifying an accurate likelihood is often challenging or infeasible, which can lead to model misspecification and inaccurate posterior inference.

\paragraph{Gibbsian inference.}
The Gibbsian (or general Bayesian) framework \cite{Bissiri_etal:2016,Zhang:2006,Catoni:2007} offers a flexible alternative that constructs posteriors by updating priors using a loss function instead of a likelihood. Specifically, one selects a loss $L$ measuring the dissimilarity between observed data $\mathbf{g}_n = \{ g(\mathbf{x}_i) \}_{i=1}^n$ and simulated data $\mathbf{f}_n(\boldsymbol{\theta}) = \{ f(\boldsymbol{\theta})(\mathbf{x}_i) \}_{i=1}^n$. The empirical risk is then defined as
\begin{equation} \label{risk_loss}
\Phi(\boldsymbol{\theta}; \mathbf{g}_n) := L\left( \mathbf{f}_n(\boldsymbol{\theta}), \mathbf{g}_n\right),
\end{equation}
and the discrete Gibbs posterior is
\begin{equation} \label{Gibbs_posterior_discrete}
\Pi_n(d\boldsymbol{\theta}) = \frac{\exp\left(- \lambda n \, \Phi(\boldsymbol{\theta}; \mathbf{g}_n)\right) \, \Pi_0(d\boldsymbol{\theta})}{\int_\Theta \exp\left(- \lambda n \, \Phi(\boldsymbol{\theta}; \mathbf{g}_n)\right) \, \Pi_0(d\boldsymbol{\theta})},
\end{equation}
where $\Pi_0(d\boldsymbol{\theta})$ is the prior measure, the factor $n$ is the number of observations, and $\lambda>0$ is the inverse temperature, also referred to as the learning rate in the Gibbs posterior literature.

This formulation admits a decision-theoretic interpretation through the variational framework of \cite{Bissiri_etal:2016}. Assuming that the prior measure $\Pi_0$ admits a density $\pi_0$ with respect to a suitable reference measure, the Gibbs posterior $\Pi_n$ likewise admits a density $\pi_n$. The posterior density may then be characterized as the unique minimizer of the cost functional
\[
C(\pi_n) := \lambda n \int \Phi(\boldsymbol{\theta}, \mathbf{g}_n) \, \pi_n(\boldsymbol{\theta}) \, d\boldsymbol{\theta} + \int \pi_n(\boldsymbol{\theta}) \, \log \frac{\pi_n(\boldsymbol{\theta})}{\pi_0(\boldsymbol{\theta})} \, d\boldsymbol{\theta},
\]
which balances fidelity to data through the expected empirical risk and coherence with prior beliefs through the Kullback-Leibler divergence. The unique minimizer satisfies
\[
\pi_n(\boldsymbol{\theta}) \propto \exp\left(- \lambda n \, \Phi(\boldsymbol{\theta}, \mathbf{g}_n)\right) \pi_0(\boldsymbol{\theta}),
\]
which is precisely the density corresponding to the Gibbs posterior measure defined above.


Although our discrete formulation \eqref{Gibbs_posterior_discrete} serves as the basis for computational algorithms, it naturally approximates a continuous setting where the data are modeled as functions $g \in \mathcal{F}(X)$ and $f(\boldsymbol{\theta}) \in \mathcal{F}(X)$ over $X$. In this continuous perspective, the empirical risk $\Phi(\boldsymbol{\theta}; \mathbf{g}_n)$ is replaced by a risk functional $\Phi(\boldsymbol{\theta}; g)$ measuring dissimilarity between the continuous functions. The resulting continuous Gibbs posterior is given by
\begin{equation} \label{Gibbs_posterior_continuous}
\Pi(d\boldsymbol{\theta}) = \frac{\exp\left(- \lambda \, \Phi(\boldsymbol{\theta}; g)\right) \, \Pi_0(d\boldsymbol{\theta})}{\int_\Theta \exp\left(- \lambda \, \Phi(\boldsymbol{\theta}; g)\right) \, \Pi_0(d\boldsymbol{\theta})},
\end{equation}
which provides the foundation for the stability and well-posedness analysis presented in Section~\ref{sec:analysis}.

The factor $n$ in \eqref{Gibbs_posterior_discrete} accounts for the normalization implicit in empirical risks, which are typically defined as averages over the observations. For instance, when the empirical risk is induced by a sample-wise loss $\ell$,
$$
\Phi(\boldsymbol{\theta}; \mathbf{g}_n) = \frac{1}{n} \sum_{i=1}^n \ell(f(\boldsymbol{\theta})(\mathbf{x}_i), g(\mathbf{x}_i)),
$$
then $n \, \Phi(\boldsymbol{\theta}; \mathbf{g}_n)$ represents the cumulative contribution of all observations. 
In the continuous formulation, this scaling is absorbed into the definition of the risk functional $\Phi(\boldsymbol{\theta}; g)$, which measures the total discrepancy between continuous fields.

While Gibbs posteriors are not derived directly from Bayes' theorem, they nonetheless retain a Bayesian form and can be computed using standard Bayesian algorithms without requiring a fully specified likelihood function.

The performance of a Gibbs posterior depends critically on the choice of loss function through the empirical risk landscape it induces over the parameter space. In oscillatory inverse problems, conventional pointwise discrepancies may assign large penalties to signals that are visually similar but slightly misaligned, resulting in highly nonconvex risk landscapes with numerous local minima. This phenomenon, commonly referred to as \emph{cycle skipping} in waveform inversion, motivates the development of transport-based loss functions that better capture geometric similarities between observed and simulated data.

\subsection{\bf Optimal Transport Dissimilarity Measures} 

Optimal transport provides a powerful feature-domain-aware framework for comparing probability measures; see e.g., \cite{Villani:03,Villani:09,Ambrosio_Gigli:13,Santambrogio:15}. In this section, we first review optimal transport in the continuous setting, which forms the mathematical foundation for our analysis, and then describe its discrete counterpart, which is essential for practical computations on finite datasets. By connecting these two formulations, we link theoretical properties of optimal transport divergences with their numerical approximations used in our algorithmic framework.

\subsubsection{Continuous optimal transport}
\label{sec:OT_continuous}

Given two probability measures $\mu_f$ and $\mu_g$ on a compact domain $X \subset \mathbb{R}^{n_d}$ and a cost function $c: X \times X \to [0, \infty)$, the Kantorovich formulation defines the optimal transport cost as
\[
\mathrm{OT}(\mu_f, \mu_g) := \inf_{\pi \in \Pi(\mu_f, \mu_g)} \int_{X \times X} c(\mathbf{x}, \mathbf{y}) \, d\pi(\mathbf{x}, \mathbf{y}),
\]
where $\Pi(\mu_f, \mu_g)$ denotes the set of joint probability measures (or couplings) on $X \times X$ with marginals $\mu_f$ and $\mu_g$. For the quadratic cost $c(\mathbf{x}, \mathbf{y})=\|\mathbf{x}-\mathbf{y}\|_2^2$, the celebrated quadratic Wasserstein distance is
\[
W_2(\mu_f, \mu_g) = \left( \mathrm{OT}(\mu_f, \mu_g) \right)^{1/2}.
\]
This defines a true metric on the space of probability measures with finite second moments, satisfying symmetry, positive definiteness, and the triangle inequality.

While $W_2$ has appealing geometric and statistical properties \cite{Villani:09,Santambrogio:15,Peyre_Cuturi:19,Panareto_Zemel:19}, such as convexity and noise-insensitivity, computing it directly is often intractable in high dimensions. To address this, Cuturi \cite{Cuturi:13} proposed an entropic regularization of the OT cost,
\[
\mathrm{OT}_\varepsilon(\mu_f, \mu_g) := \inf_{\pi \in \Pi(\mu_f, \mu_g)} \int_{X \times X} c(\mathbf{x}, \mathbf{y}) \, d\pi(\mathbf{x}, \mathbf{y}) - \varepsilon H(\pi),
\]
where $H(\pi) = - \int_{X \times X} \log \left(\frac{d\pi}{d\mu_f \otimes d\mu_g}\right) \, d\pi$ denotes the entropy of $\pi$ relative to the product measure $\mu_f \otimes \mu_g$, and $\varepsilon > 0$ is a small regularization parameter. 
Built on this regularized transport cost, we define the \emph{Sinkhorn divergence} as
\begin{equation}\label{eq:Sinkhorn_continuous}
S_\varepsilon(\mu_f, \mu_g) := \left(\mathrm{OT}_\varepsilon(\mu_f, \mu_g) - \tfrac{1}{2} \mathrm{OT}_\varepsilon(\mu_f, \mu_f) - \tfrac{1}{2} \mathrm{OT}_\varepsilon(\mu_g, \mu_g) \right)^{1/2}.
\end{equation}
This divergence is symmetric, positive definite (i.e., it vanishes if and only if $\mu_f=\mu_g$), and scales consistently with the classical Wasserstein distance, satisfying
\[
\lim_{\varepsilon \to 0} S_\varepsilon(\mu_f, \mu_g) = W_2(\mu_f, \mu_g).
\]
However, $S_\varepsilon$ does not in general satisfy the triangle inequality and is therefore not a metric. Even weaker, local versions of the triangle inequality may fail to hold unless the measures are suitably close to one another.

\begin{remark}
In the optimal transport literature, including \cite{Feydy_etal:19}, the debiased Sinkhorn divergence is typically defined in squared form, corresponding to our $S_\varepsilon^2$. Here, we adopt the rooted form $S_\varepsilon$ to match the scale of $W_2$ directly.
\end{remark}

In practice, the probability measures $\mu_f$ and $\mu_g$ are represented via finite samples, leading naturally to empirical discrete measures and discrete OT formulations, which we detail next.

\subsubsection{Discrete optimal transport}

Consider two discrete probability measures on $X$:
\[
F_n = \sum_{i=1}^n f_i \, \delta_{\mathbf{x}_i}, \qquad G_n = \sum_{i=1}^n g_i \, \delta_{\mathbf{x}_i},
\]
where $\delta_{\mathbf{x}_i}$ is the Dirac point-mass measure at point $\mathbf{x}_i \in X$, and the weights $\{ (f_i, g_i) \}_{i=1}^n$ form two probability vectors ${\bf f}_n := (f_1, \dotsc, f_n) \in \mathbb{R}_+^n$ and ${\bf g}_n := (g_1, \dotsc, g_n) \in \mathbb{R}_+^n$, summing to 1. For notational simplicity, we assume both measures share the same support.

Let $P = [P_{ij}] \in \mathbb{R}_+^{n \times n}$ denote a transport matrix, i.e., a non-negative matrix with row and column sums equal to ${\bf f}_n$ and ${\bf g}_n$, respectively. Each element $P_{ij}$ represents the amount of mass transported from source point $\mathbf{x}_i$ to target point $\mathbf{x}_j$. For the quadratic cost matrix $C = [C_{ij}]  \in \mathbb{R}_+^{n \times n}$, with components $C_{ij}=||\mathbf{x}_i-\mathbf{x}_j||_2^2$, the quadratic Wasserstein distance is defined as
\begin{equation}\label{W2}
W({\bf f}_n, {\bf g}_n) = \bigl( \min_{P \in U({\bf f}_n, {\bf g}_n)} \langle P, C \rangle \bigr)^{1/2}, 
\quad 
U({\bf f}_n, {\bf g}_n) := \left\{ P \in \mathbb{R}_+^{n \times n} : P \mathds{1}_n = {\bf f}_n, \; P^{\top} \mathds{1}_n = {\bf g}_n \right\},
\end{equation}
where $\langle P, C \rangle = \sum_{i,j} P_{ij} C_{ij}$ is the total transport cost associated with $P$, and $\mathds{1}_n$ is the vector of $n$ ones. 
Direct computation of the Wasserstein distance \eqref{W2} is costly, requiring $\mathcal{O}(n^3)$ time in general \cite{Peyre_Cuturi:19}.

Cuturi’s entropic regularization \cite{Cuturi:13} and subsequent developments \cite{Ramdas_etal:17,Genevay_etal:18,Salimans_etal:2018,Sanjabi_etal:18,Feydy_etal:19,Chizat_etal:20} address the computational bottleneck of OT by solving the entropy-regularized problem
\begin{equation}\label{OT_Regularized}
T_{\varepsilon}({\bf f}_n, {\bf g}_n) := \min_{P \in U({\bf f}_n, {\bf g}_n)} \langle P, C \rangle - \varepsilon H(P), \qquad H(P) = - \sum_{i,j} P_{ij} (\log P_{ij} - 1),
\end{equation}
leading to the discrete Sinkhorn divergence
\begin{equation}\label{S_unbiased}
S({\bf f}_n, {\bf g}_n) = \left( T_{\varepsilon}({\bf f}_n, {\bf g}_n) - \tfrac{1}{2} \left[ T_{\varepsilon}({\bf f}_n, {\bf f}_n) + T_{\varepsilon}({\bf g}_n, {\bf g}_n) \right] \right)^{1/2}.
\end{equation}
The Sinkhorn divergence \eqref{S_unbiased} inherits desirable properties including convexity, smoothness, and symmetric positive definiteness \cite{Feydy_etal:19}. 
Moreover, efficient implementations of Sinkhorn's algorithm and related acceleration techniques make large-scale transport computations computationally feasible; see, e.g., \cite{Sinkhorn:64,Sinkhorn_Knopp:67,Solomon_etal:15,Altschuler_etal:17,Dvurechensky_etal:18,Chizat_etal:20,Motamed_Sinkhorn:20} for details.

\subsection{Transport-Based Losses for Signed Oscillatory Data}
\label{sec:OT_loss}

Among the various transport-based discrepancies available for comparing probability measures, the Sinkhorn divergence offers a particularly attractive combination of displacement-aware comparison, smoothness, and computational efficiency. A key challenge in using the Sinkhorn divergence as the loss function within the Gibbs posterior framework, however, is that classical optimal transport compares probability measures and therefore requires inputs to be nonnegative and normalized to have unit total mass. In many scientific inverse problems, observed and simulated signals are naturally signed and may not satisfy this requirement. Although unbalanced optimal transport formulations \cite{UOT:18,UOT:19,Sejourne_Etal:2021} relax the equal-mass constraint, they still operate on nonnegative measures and therefore do not directly address the challenges posed by signed data.

To address this difficulty, we build upon normalization strategies that transform oscillatory signals into probability measures prior to the application of transport-based divergences \cite{Engquist_Froese:14,Yang_etal:18,FrenchOT3,Engquist_Yang:19,Yang_Engquist:18,Engquist_Yang:19b,Motamed_Appelo:19}. While such constructions enable the use of optimal transport for signed data, they do not by themselves fully exploit the information contained in both positive and negative signal excursions. This motivates the symmetrized Sinkhorn formulation proposed in this work, which incorporates transport information from both the original signals and their negations. The resulting loss retains the favorable smoothness and transport-based comparison properties of Sinkhorn divergences while providing a more balanced representation of signed oscillatory data.

Building on these ideas, the next section develops the proposed symmetrized Sinkhorn-Gibbs framework for signed oscillatory data.

\section{Construction of the Symmetrized Sinkhorn-Gibbs Posterior}
\label{sec:inf_OT}

In this section, we construct a Gibbsian inference framework for oscillatory inverse problems based on transport-driven loss functions. We begin by introducing a data-processing procedure that enables the application of optimal transport to signed signals while preserving stability and convexity-related of the resulting loss function. We then develop a statistical interpretation of the processed data that naturally integrates transport-based discrepancies into the Gibbsian framework and leads to the construction of a symmetrized Sinkhorn-Gibbs posterior. The section concludes with a practical adaptive MCMC algorithm for posterior computation. The theoretical analysis and formal proofs supporting the proposed framework are presented separately in Section~\ref{sec:analysis}.

\subsection{Data Processing}
\label{sec:data_processing}

Consider a set of raw observations $\{\tilde{g}(\mathbf{x}_i)\}_{i=1}^n$ and raw model predictions $\{\tilde{f}(\boldsymbol{\theta})(\mathbf{x}_i)\}_{i=1}^n$ of the parametric forward map $f:\Theta\rightarrow\mathcal{F}(X)$ evaluated at discrete points $\{\mathbf{x}_i\}_{i=1}^n \subset X \subset \mathbb{R}^{n_d}$. As discussed in Section~\ref{sec:OT_loss}, the application of transport-based divergences to oscillatory inverse problems requires additional care because optimal transport operates on probability measures, whereas the observed data and model predictions are generally signed. We therefore introduce a two-step data-processing procedure that converts the raw signals into probability vectors suitable for transport-based comparisons. Beyond ensuring compatibility with optimal transport, this construction preserves convexity-related properties of the resulting loss function and supports the robustness guarantees established in Section~\ref{sec:analysis}.

\paragraph{I. Mean-centering.} 
Many oscillatory signals contain global offsets that are not directly informative for comparing their underlying structure. We therefore remove such offsets by subtracting the empirical mean of the discrete samples. 
This operation focuses the comparison on deviations from the average signal level, improves robustness to additive shifts, and aligns the discrete data with the zero-mean function space $\mathcal{F}(X)$ considered in the theoretical analysis of Section~\ref{sec:analysis}. 
Given raw measurements $\{\tilde{g}(\mathbf{x}_i)\}_{i=1}^n$ and model predictions $\{\tilde{f}(\boldsymbol{\theta})(\mathbf{x}_i)\}_{i=1}^n$, we define
\[
g(\mathbf{x}_i) := \tilde{g}(\mathbf{x}_i) - \frac{1}{n} \sum_{j=1}^n \tilde{g}(\mathbf{x}_j), \qquad
f(\boldsymbol{\theta})(\mathbf{x}_i) := \tilde{f}(\boldsymbol{\theta})(\mathbf{x}_i) - \frac{1}{n} \sum_{j=1}^n \tilde{f}(\boldsymbol{\theta})(\mathbf{x}_j),
\]
so that the processed discrete signals have zero empirical mean.

\paragraph{II. Mapping to the probability simplex.} 
Mean-centering generally produces signed data, even when the original measurements are strictly positive. Since optimal transport requires positive probability measures, we map the signed, mean-centered data into positive probability vectors using a smooth, strictly increasing normalization function $\sigma: \mathbb{R} \to \mathbb{R}_+$. Specifically, we set
\[
T_\sigma g(\mathbf{x}_i) := \frac{\sigma\big(g(\mathbf{x}_i)\big)}{\sum_{j=1}^n \sigma\big(g(\mathbf{x}_j)\big)}, \qquad
T_\sigma f(\boldsymbol{\theta})(\mathbf{x}_i) := \frac{\sigma\big(f(\boldsymbol{\theta})(\mathbf{x}_i)\big)}{\sum_{j=1}^n \sigma\big(f(\boldsymbol{\theta})(\mathbf{x}_j)\big)},
\]
which yields discrete probability vectors $T_\sigma \mathbf{g}_n$ and $T_\sigma \mathbf{f}_n(\boldsymbol{\theta})$ supported on $\{\mathbf{x}_i\}_{i=1}^n$. As detailed in Section~\ref{sec:analysis}, careful normalization ensures the resulting Sinkhorn loss retains convexity and Lipschitz continuity with respect to perturbations. 
A concrete choice of the normalization function $\sigma$ used in our experiments is the softplus function,
\begin{equation}\label{eq:softplus}
\sigma_{\delta}(z) = \ln\big(1 + e^{\delta z}\big),
\end{equation}
where $\delta > 0$ is a scaling hyperparameter that can be adjusted based on the expected signal amplitude.

The resulting probability vectors serve as the fundamental objects used in the transport-based comparisons that define the Gibbs posteriors developed below.

\subsection{Statistical Interpretation and Discrete Posterior Construction}
\label{sec:stat_interp}

We interpret the processed observed data as a discrete random vector supported on the set of points $\{\mathbf{x}_i\}_{i=1}^n \subset X$, with associated probabilities given by the normalized weights $\{ T_\sigma g(\mathbf{x}_i) \}_{i=1}^n$. This perspective explicitly incorporates the geometric structure of the data, ensuring that spatial relationships encoded in $\{\mathbf{x}_i\}_{i=1}^n$ are preserved throughout the inference process.

This interpretation differs fundamentally from the standard Bayesian approach, where measurements $\{g(\mathbf{x}_i)\}_{i=1}^n$ are treated as realizations of a random variable evaluated at fixed inputs $\{\mathbf{x}_i\}_{i=1}^n$, or more generally, where the pair $(\mathbf{x}, g(\mathbf{x}))$ is modeled as a random vector producing $n$ i.i.d. samples $\{(\mathbf{x}_i, g(\mathbf{x}_i))\}_{i=1}^n$. In contrast, by treating the processed data as a discrete probability measure on $\{\mathbf{x}_i\}_{i=1}^n$, our approach enables a geometrically meaningful integration of Sinkhorn-based divergences into the Gibbsian framework.


The processed observed data and model predictions induce discrete probability measures supported on the grid points $\{\mathbf{x}_i\}_{i=1}^n$. The corresponding normalized probability vectors are used to define the \emph{normalized discrete Sinkhorn divergence}
\begin{equation}\label{eq:Sinkhorn_discrete}
S_{\sigma}(\mathbf{f}_n(\boldsymbol{\theta}), \mathbf{g}_n) := S\left(T_\sigma \mathbf{f}_n(\boldsymbol{\theta}), \, T_\sigma \mathbf{g}_n \right),
\end{equation}
where $S(\cdot,\cdot)$ denotes the discrete Sinkhorn divergence \eqref{S_unbiased} applied to the normalized probability vectors formed from the processed data samples. We emphasize that $S_{\sigma}$ is applied directly to the vectors
\[
\mathbf{g}_n := \{g(\mathbf{x}_i)\}_{i=1}^n, \qquad \mathbf{f}_n(\boldsymbol{\theta}) := \{f(\boldsymbol{\theta})(\mathbf{x}_i)\}_{i=1}^n,
\]
which have zero empirical mean as ensured by the data processing procedure of Section~\ref{sec:data_processing}.

The empirical risk function $\Phi(\boldsymbol{\theta}; \mathbf{g}_n)$ obtained from the square of the normalized discrete Sinkhorn divergence \eqref{eq:Sinkhorn_discrete} provides a natural transport-based loss within the Gibbsian framework. This leads to a normalized Sinkhorn-Gibbs posterior
\begin{equation} \label{DSloss_normalized}
\Pi_n(d\boldsymbol{\theta}) = \frac{\exp\left(- \lambda n \, \Phi(\boldsymbol{\theta}; \mathbf{g}_n)\right) \, \Pi_0(d\boldsymbol{\theta})}{\int_\Theta \exp\left(- \lambda n \, \Phi(\boldsymbol{\theta}; \mathbf{g}_n)\right) \, \Pi_0(d\boldsymbol{\theta})}, \qquad
\Phi(\boldsymbol{\theta}; \mathbf{g}_n) = S_{\sigma}(\mathbf{f}_n(\boldsymbol{\theta}),\mathbf{g}_n)^2.
\end{equation}

While this construction successfully incorporates transport geometry into the Gibbsian framework, the transformation of signed oscillatory signals into probability measures may not fully exploit information contained in both positive and negative signal excursions. To incorporate transport information from both phases of the oscillation, we introduce the \emph{symmetrized normalized squared Sinkhorn loss}
\begin{equation}\label{eq:symmetrized_squared_Sink}
    D_\sigma(\mathbf{f}_n(\boldsymbol{\theta}), \mathbf{g}_n)^2 := S_\sigma(\mathbf{f}_n(\boldsymbol{\theta}), \mathbf{g}_n)^2 + S_\sigma(-\mathbf{f}_n(\boldsymbol{\theta}), -\mathbf{g}_n)^2.
\end{equation}
Using the symmetrized loss as the empirical risk yields the symmetrized Sinkhorn-Gibbs posterior
\begin{equation} \label{DSloss_normalized_symmetrized}
\Pi_n(d\boldsymbol{\theta}) = \frac{\exp\left(- \lambda n \, \Phi(\boldsymbol{\theta}; \mathbf{g}_n)\right) \, \Pi_0(d\boldsymbol{\theta})}{\int_\Theta \exp\left(- \lambda n \, \Phi(\boldsymbol{\theta}; \mathbf{g}_n)\right) \, \Pi_0(d\boldsymbol{\theta})}, \qquad
\Phi(\boldsymbol{\theta}; \mathbf{g}_n) = D_{\sigma}(\mathbf{f}_n(\boldsymbol{\theta}),\mathbf{g}_n)^2.
\end{equation}

Beyond incorporating information from both phases of the oscillation, the symmetrized loss is designed to reduce spurious oscillations and competing local minima in the empirical risk landscape and to enhance robustness to sign-sensitive perturbations in oscillatory data. As discussed in Section~\ref{sec:analysis}, the symmetrized construction admits stronger convexity and robustness properties than its non-symmetrized counterpart under suitable assumptions. Moreover, the numerical experiments of Section~\ref{sec:numerical_exp} consistently indicate improved posterior performance when the symmetrized loss is employed. The symmetrized Sinkhorn-Gibbs posterior \eqref{DSloss_normalized_symmetrized} therefore constitutes the primary inference framework proposed in this work.

\subsection{Adaptive Sampling of Gibbs Posteriors with Learned Inverse Temperature}
\label{sec:adaptive_mcmc}

As Gibbs posteriors retain the same mathematical structure as Bayesian posteriors, many standard Bayesian sampling algorithms can be adapted to compute them. A key practical challenge, however, is the specification of the inverse-temperature parameter \(\lambda\), which controls the concentration of the posterior around minimizers of the empirical risk. The choice of \(\lambda\) can have a substantial effect on posterior uncertainty and often requires careful calibration. Several approaches have been proposed in the literature, including coverage-based calibration, posterior predictive checks, and fully Bayesian treatments that place a prior distribution on the inverse temperature itself; see, for example, \cite{SyringMartin2019,ZafarNicholls2024,LeeLiuNicholls2025}.

In this subsection, we present a general sampling framework in which the inverse-temperature parameter is treated as an unknown quantity and inferred jointly with the physical model parameters. This formulation provides a flexible approach that naturally accommodates discrepancies with different numerical scales and allows the data to determine the appropriate posterior concentration. While some of the numerical experiments in Section~\ref{sec:numerical_exp} employ calibrated values of $\lambda$ for comparison purposes, the framework presented here applies to the more general setting in which $\lambda$ is learned as part of the inference procedure.

Importantly, the adaptive sampling strategy developed below is independent of the specific empirical risk used to construct the Gibbs posterior. In the numerical experiments of Section~\ref{sec:numerical_exp}, we consider Gibbs posteriors generated by the squared Euclidean discrepancy, a trace-Wasserstein discrepancy, and the proposed symmetrized Sinkhorn loss. The sampling framework presented here applies uniformly to all of these cases.

Given observed data $\mathbf{g}_n$ and an empirical risk function
$\Phi(\boldsymbol{\theta};\mathbf{g}_n)$ defining a Gibbs posterior of the form
\eqref{Gibbs_posterior_discrete}, we place a prior distribution on the inverse-temperature parameter $\lambda$ and consider the corresponding joint Gibbs posterior on $\Theta\times(0,\infty)$,
\[
\Pi_n(d\boldsymbol{\theta},d\lambda)
\propto
\exp\!\left[
-\lambda n \Phi(\boldsymbol{\theta};\mathbf{g}_n)
\right]
\Pi_0(d\boldsymbol{\theta})
\Pi_0(d\lambda).
\]
Assuming that the prior measures admit densities $\pi_0(\boldsymbol{\theta})$ and $\pi_0(\lambda)$ with respect to suitable reference measures, the corresponding joint posterior density is given by
\[
\pi(\boldsymbol{\theta},\lambda\mid \mathbf{g}_n)
\propto
\exp\!\left[-\lambda n \Phi(\boldsymbol{\theta};\mathbf{g}_n)\right]
\pi_0(\boldsymbol{\theta})\pi_0(\lambda).
\]
To preserve positivity and facilitate efficient exploration across multiple orders of magnitude, the inverse temperature is updated through its logarithm,
\[
\ell=\log\lambda.
\]

To sample from the joint posterior, we employ an adaptive Metropolis-within-Gibbs algorithm. At each iteration, we first update \(\boldsymbol{\theta}\) while keeping \(\ell\) fixed, and then update \(\ell\) while keeping \(\boldsymbol{\theta}\) fixed. The proposal for \(\boldsymbol{\theta}\in\mathbb{R}^{n_p}\) is a multivariate Gaussian random walk,
\[
\boldsymbol{\theta}^{*}
=
\boldsymbol{\theta}^{(k-1)}
+
\exp\!\left(\eta_{\theta}^{(k-1)}\right)
L_{\theta}^{(k-1)} z,
\qquad
z\sim N(0,I_{n_p}),
\]
where
\[
L_{\theta}^{(k-1)}
\bigl(L_{\theta}^{(k-1)}\bigr)^\top
=
\Sigma_{\theta}^{(k-1)}.
\]
Here, \(\Sigma_{\theta}^{(k-1)}\) denotes an adaptive empirical covariance matrix and \(\eta_{\theta}^{(k-1)}\) is an adaptive log-scaling parameter controlling the proposal magnitude. Similarly, the proposal for the inverse temperature is defined in log-space by
\[
\ell^{*}
=
\ell^{(k-1)}
+
\exp\!\left(\eta_{\lambda}^{(k-1)}\right)
\sqrt{v_{\lambda}^{(k-1)}}\,\xi,
\qquad
\xi\sim N(0,1),
\]
where \(v_{\lambda}^{(k-1)}\) denotes the adaptive empirical variance of the log-temperature chain and \(\eta_{\lambda}^{(k-1)}\) is an adaptive log-scaling parameter.

The adaptation follows the ASM+AM strategy of Vihola~\cite{vihola2020}, which combines adaptive Metropolis (AM) covariance estimation with adaptive scaling Metropolis (ASM) proposal tuning. The AM component updates the empirical covariance of the \(\boldsymbol{\theta}\)-chain according to
\begin{equation}\label{eq:adaptive_cov}
\Sigma_{\theta}^{(k)}
=
\Sigma_{\theta}^{(k-1)}
+
\gamma_k
\left(
\delta_{\theta}^{(k)}
(\delta_{\theta}^{(k)})^\top
-
\Sigma_{\theta}^{(k-1)}
\right),
\qquad
\delta_{\theta}^{(k)}
=
\boldsymbol{\theta}^{(k)}
-
\mu_{\theta}^{(k-1)},
\end{equation}
where the empirical mean is updated by
\[
\mu_{\theta}^{(k)}
=
\mu_{\theta}^{(k-1)}
+
\gamma_k
\delta_{\theta}^{(k)}.
\]
The proposal scale is adapted through the ASM recursion
\begin{equation}\label{eq:adaptive_log_scale}
\eta_{\theta}^{(k)}
=
\eta_{\theta}^{(k-1)}
+
\gamma_k
\left(
\alpha_{\theta}^{(k)}
-
\alpha_{\theta}^{\rm tgt}
\right),
\qquad
\gamma_k=(k+1)^{-2/3},
\end{equation}
where \(\alpha_{\theta}^{(k)}\) denotes the Metropolis acceptance probability associated with the current \(\boldsymbol{\theta}\)-proposal. Following standard practice, we use the target acceptance probability
\[
\alpha_{\theta}^{\rm tgt}
=
\begin{cases}
0.44, & n_p=1,\\
0.234, & n_p\ge 2.
\end{cases}
\]

The same AM+ASM construction is applied to the log-temperature update. Specifically, the empirical variance \(v_{\lambda}^{(k)}\) is updated analogously to \eqref{eq:adaptive_cov}, while the corresponding log-scaling parameter \(\eta_{\lambda}^{(k)}\) is updated according to \eqref{eq:adaptive_log_scale} with target acceptance probability \(\alpha_{\lambda}^{\rm tgt}=0.44\).

The complete adaptive sampling procedure is summarized in Algorithm~\ref{alg:adaptive_mhg}. In the implementation, covariance eigenvalues, empirical variances, and log-scaling parameters are constrained to remain within prescribed bounds in order to prevent degenerate proposal distributions during the early adaptation phase.

\begin{algorithm}[!ht]
\caption{{\fontsize{11}{12}\selectfont
Adaptive sampler for Gibbs posteriors with learned inverse temperature}}
\label{alg:adaptive_mhg}
\begin{algorithmic}
\medskip

\STATE {\bf 1.} {\it Input}: \ \ \ \ $\mathbf{g}_n=\{g(\mathbf{x}_i)\}_{i=1}^n$: observed data on grid points $\{\mathbf{x}_i\}_{i=1}^n\subset X\subset\mathbb{R}^{n_d}$.

\hskip 2.14cm $\mathbf{f}_n(\boldsymbol{\theta})=\{f(\boldsymbol{\theta})(\mathbf{x}_i)\}_{i=1}^n$: model predictions for $\boldsymbol{\theta}\in\Theta\subset\mathbb{R}^{n_p}$.

\hskip 2.14cm $\Phi(\boldsymbol{\theta};\mathbf{g}_n)
=L(\mathbf{f}_n(\boldsymbol{\theta}),\mathbf{g}_n)$: empirical risk.

\hskip 2.14cm $\pi_0(\boldsymbol{\theta})$ and $\pi_0(\ell)$: prior densities, with $\ell=\log\lambda$ and $\lambda=\exp(\ell)>0$.

\hskip 2.14cm $K$: total number of MCMC iterations.

\hskip 2.14cm $\alpha_{\theta}^{\rm tgt}$ and $\alpha_{\lambda}^{\rm tgt}$: target acceptance probabilities.

\medskip
\STATE {\bf 2.} {\it Initialization}:

\ \ \ \ \  Choose $(\boldsymbol{\theta}^{(0)},\ell^{(0)})$ and set $\lambda^{(0)}=\exp(\ell^{(0)})$.

\ \ \ \ \ Evaluate $\Phi^{(0)}=\Phi(\boldsymbol{\theta}^{(0)};\mathbf{g}_n)$.

\ \ \ \ \ Initialize $\mu_\theta^{(0)} = \boldsymbol{\theta}^{(0)}$, $\Sigma_\theta^{(0)}$, $\mu_\lambda^{(0)} = \ell^{(0)}$, $v_\lambda^{(0)}$, and $\eta_\theta^{(0)} = \eta_\lambda^{(0)} = 0$.

\medskip
\STATE {\bf 3.} {\it Adaptive Gibbs posterior sampling loop}:

\medskip
\ \ \  {\bf for} $k=1,2,\ldots,K$

\begin{itemize}[leftmargin=1.0cm]
\setlength\itemsep{-0.35em}

\item Metropolis step 1: update $\boldsymbol{\theta}$, given $\lambda^{(k-1)}$.

\begin{itemize}[leftmargin=.05cm]
\setlength\itemsep{-0.35em}

\item[$\circ$] Draw $z\sim N(0,I_{n_p})$ and let $\boldsymbol{\theta}^*
=
\boldsymbol{\theta}^{(k-1)}
+
\exp(\eta_\theta^{(k-1)})L_\theta^{(k-1)}z$, with $L_\theta^{(k-1)} (L_\theta^{(k-1)})^\top=\Sigma_\theta^{(k-1)}$.

\item[$\circ$] Evaluate $\Phi^*=\Phi(\boldsymbol{\theta}^*;\mathbf{g}_n)$ and compute $\alpha_\theta^{(k)}
=
\min\left\{
1,\,
\frac{
\exp[-\lambda^{(k-1)}n\Phi^*]\pi_0(\boldsymbol{\theta}^*)
}{
\exp[-\lambda^{(k-1)}n\Phi^{(k-1)}]\pi_0(\boldsymbol{\theta}^{(k-1)})
}
\right\}$.

\item[$\circ$] Set $(\boldsymbol{\theta}^{(k)},\Phi^{(k)})=(\boldsymbol{\theta}^*,\Phi^*)$ with probability $\alpha_\theta^{(k)}$; otherwise $(\boldsymbol{\theta}^{(k)},\Phi^{(k)})=(\boldsymbol{\theta}^{(k-1)},\Phi^{(k-1)})$.

\end{itemize}

\item Metropolis step 2: update $\lambda\mid\boldsymbol{\theta}^{(k)},\mathbf{g}_n$ in log-space.

\begin{itemize}[leftmargin=.05cm]
\setlength\itemsep{-0.25em}

\item[$\circ$] Draw $\xi\sim N(0,1)$ and propose $\ell^*
=
\ell^{(k-1)}
+
\exp(\eta_\lambda^{(k-1)})\sqrt{v_\lambda^{(k-1)}}\,\xi$, and set $\lambda^*=\exp(\ell^*)$.

\item[$\circ$] Compute 
$\alpha_\lambda^{(k)}
=
\min\left\{
1,\,
\frac{
\exp[-\lambda^*n\Phi^{(k)}]\pi_0(\ell^*)
}{
\exp[-\lambda^{(k-1)}n\Phi^{(k)}]\pi_0(\ell^{(k-1)})
}
\right\}$.

\item[$\circ$] Set $\ell^{(k)}=\ell^*$ with probability $\alpha_\lambda^{(k)}$; otherwise $\ell^{(k)}=\ell^{(k-1)}$. Then set $\lambda^{(k)}=\exp(\ell^{(k)})$.

\end{itemize}

\item Adapt proposal statistics.

\begin{itemize}[leftmargin=.05cm]
\setlength\itemsep{-0.25em}

\item[$\circ$] Update $\mu_\theta^{(k)}$ and $\Sigma_\theta^{(k)}$ by \eqref{eq:adaptive_cov}, and similarly update $\mu_\lambda^{(k)}$ and $v_\lambda^{(k)}$.

\item[$\circ$] Update the adaptive log-scaling $\eta_\theta^{(k)}$ by \eqref{eq:adaptive_log_scale}, and similarly update $\eta_\lambda^{(k)}$.

\item[$\circ$] Clamp $\Sigma_\theta^{(k)}$, $v_\lambda^{(k)}$, $\eta_\theta^{(k)}$, and $\eta_\lambda^{(k)}$ within prescribed bounds if necessary.

\end{itemize}

\end{itemize}

\ \ \ {\bf end for}

\medskip
\STATE {\bf 4.} {\it Output}: return
$\{(\boldsymbol{\theta}^{(k)},\lambda^{(k)})\}_{k=1}^K$
after discarding burn-in samples and thinning the chain.

\end{algorithmic}
\end{algorithm}


\section{Theoretical Analysis of Symmetrized Sinkhorn-Gibbs Inference}
\label{sec:analysis}

In this section, we provide a rigorous analysis of the symmetrized Sinkhorn-Gibbs inference framework introduced in Section~\ref{sec:inf_OT}. Our analysis considers the observed data $g$ and model predictions $f(\boldsymbol{\theta})$ as bounded, real-valued functions defined on a compact domain $X$, and formalizes the transformations needed to integrate them into optimal transport-based inference. 
The section is organized into three parts: first, we establish the smoothness, convexity, and Lipschitz continuity properties underlying the normalized Sinkhorn divergence and the associated symmetrized loss; second, we prove the well-definedness of the resulting Gibbs posterior; and third, we demonstrate robustness of the posterior with respect to perturbations in the data and the forward model.

\subsection{Convexity and Stability of Symmetrized Sinkhorn Losses}
\label{sec:convexity}

A key challenge in integrating data into the Sinkhorn-Gibbs framework is ensuring that data transformations preserve the smoothness and convexity of the Sinkhorn-based divergences without loss of information. It is well known that the convexity of the Wasserstein distance can be destroyed by data normalization \cite{FrenchOT3,Yang_Engquist:18,Engquist_Yang:19b,Motamed_Appelo:19}. We show that employing particular smooth, one-to-one normalization functions preserves smoothness and asymptotic convexity in our setting (see Propositions~\ref{thm:smoothness}-\ref{thm:symmetrized_convexity} and Corollary~\ref{corollary_1}).

A second critical requirement is the preservation of stability of the Sinkhorn divergence. In particular, we aim for a strong notion of stability measured in the negative Sobolev space $\dot{H}^{-1}(X)$. For any bounded function $h$ defined on $X$, the $\dot{H}^{-1}$ semi-norm is given by
\[
\| h \|_{\dot{H}^{-1}(X)} := \sup_{\phi \in \dot{H}^1(X)} \left\{ \int_X h \, \phi \, d\mathbf{x} \, \middle| \, \|\phi\|_{\dot{H}^1(X)}^2 = \int_X |\nabla \phi|^2 \, d\mathbf{x} \le 1 \right\},
\]
which measures the size of $h$ in a weak sense, making the semi-norm insensitive to high-frequency oscillations. 
This semi-norm is only finite if $h$ has zero mean over $X$: if $\int_X h(\mathbf{x}) \, d\mathbf{x} \neq 0$, constant test functions $\phi$ can drive the supremum arbitrarily large. Therefore, before normalizing the data, we ensure that both the observed and simulated signals satisfy
\[
\int_X g(\mathbf{x}) \, d\mathbf{x} = 0, \qquad \int_X f(\boldsymbol{\theta})(\mathbf{x}) \, d\mathbf{x} = 0,
\]
so that $g, f(\boldsymbol{\theta}) \in \mathcal{F}(X)$, the space of bounded, zero-mean functions defined in \eqref{FX1}. This mean-zero condition guarantees that the $\dot{H}^{-1}$ semi-norm is well-defined and enables stability results such as Lipschitz continuity of the Sinkhorn divergence (see Proposition~\ref{thm:Lipschitz}).

Once the signals are mean-centered, they must be transformed into probability densities to apply optimal transport. This is achieved by mapping the processed, signed data to positive densities via a smooth, strictly increasing normalization function $\sigma: \mathbb{R} \to \mathbb{R}_+$, such as the softplus function defined in \eqref{eq:softplus}. The normalized densities are given by
\begin{equation} \label{eq:T_sigma_cont}
T_\sigma g(\mathbf{x}) := \frac{\sigma(g(\mathbf{x}))}{\int_X \sigma(g(\mathbf{x}')) \, d\mathbf{x}'}, 
\qquad
T_\sigma f(\boldsymbol{\theta})(\mathbf{x}) := \frac{\sigma(f(\boldsymbol{\theta})(\mathbf{x}))}{\int_X \sigma(f(\boldsymbol{\theta})(\mathbf{x}')) \, d\mathbf{x}'}.
\end{equation}

These transformations ensure compatibility with optimal transport and enable the definition of the \emph{normalized Sinkhorn divergence},
\begin{equation}\label{eq:Sinkhorn_density}
S_{\sigma,\varepsilon}(f(\boldsymbol{\theta}), g) := S_\varepsilon\big(T_\sigma f(\boldsymbol{\theta}), \, T_\sigma g \big),
\end{equation}
where $S_{\varepsilon}(\cdot,\cdot)$ denotes the Sinkhorn divergence \eqref{eq:Sinkhorn_continuous} applied to the probability measures induced by the normalized densities in \eqref{eq:T_sigma_cont}. We emphasize that $S_{\sigma,\varepsilon}$ quantifies the dissimilarity directly between raw mean-zero functions $f(\boldsymbol{\theta}), g \in \mathcal{F}(X)$ by mapping them into the optimal transport framework.

The primary loss function considered in this work is the symmetrized normalized squared Sinkhorn loss
\begin{equation}\label{eq:sym_loss_cont}
\mathcal D_{\sigma,\varepsilon}^2(f,g)
:=
S_{\sigma,\varepsilon}^2(f,g)
+
S_{\sigma,\varepsilon}^2(-f,-g),
\end{equation}
which corresponds to the continuous analogue of the discrete loss introduced in Section~\ref{sec:stat_interp}. The results below first establish fundamental properties of the normalized Sinkhorn divergence and then show how these properties extend naturally to the symmetrized loss. 
Specifically, we show that the proposed transformations, while ensuring compatibility with optimal transport, preserve essential mathematical properties of the normalized Sinkhorn divergence and the associated symmetrized loss, including smoothness, asymptotic convexity, and Lipschitz continuity in the $\dot{H}^{-1}$ norm. These foundational results provide theoretical support for the symmetrized Sinkhorn-Gibbs framework developed later in this section.

\begin{proposition}\label{thm:smoothness}
Let $f, g \in \mathcal{F}(X)$. The normalized Sinkhorn divergence $S_{\sigma,\varepsilon}(f,g)$, defined in \eqref{eq:Sinkhorn_density} with a one-to-one, smooth normalization function $\sigma: \mathbb{R} \to \mathbb{R}_+$, is a symmetric, positive definite, and smooth loss function with respect to $f$ and $g$.
\end{proposition}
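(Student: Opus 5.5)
The plan is to write $S_{\sigma,\varepsilon}(f,g)=S_\varepsilon(T_\sigma f,T_\sigma g)$ as a composition and read each property off the two layers. The inner layer is the normalization map $T_\sigma:\mathcal{F}(X)\to\mathcal{P}(X)$ from \eqref{eq:T_sigma_cont}. The outer layer is the debiased Sinkhorn divergence \eqref{eq:Sinkhorn_continuous}, whose symmetry, positive definiteness and smoothness on probability measures over a compact domain we take from \cite{Feydy_etal:19}.

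\emph{Symmetry} is immediate. $\mathrm{OT}_\varepsilon(\mu,\nu)=\mathrm{OT}_\varepsilon(\nu,\mu)$, because transposing a coupling preserves both the cost (since $c$ is symmetric) and the relative entropy with respect to the product measure. The debiasing terms are symmetric in $(f,g)$ by construction.

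\emph{Positive definiteness.} Nonnegativity of $S_\varepsilon^2$ and the equivalence $S_\varepsilon(\mu,\nu)=0\iff\mu=\nu$ are the main theorem of \cite{Feydy_etal:19}; this uses that the Gibbs kernel $e^{-\|\mathbf{x}-\mathbf{y}\|^2/\varepsilon}$ is positive definite on compact $X$. So $S_{\sigma,\varepsilon}(f,g)=0$ if and only if $T_\sigma f=T_\sigma g$ a.e. It remains to show that $T_\sigma$ is injective on $\mathcal{F}(X)$. Suppose $\sigma(f)=c\,\sigma(g)$ a.e. with $c=\int\sigma(f)/\int\sigma(g)>0$. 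If $c>1$, then $\sigma(f)>\sigma(g)$ a.e., so $f>g$ a.e. because $\sigma$ is strictly increasing; this contradicts $\int_X(f-g)\,d\mathbf{x}=0$. The case $c<1$ is symmetric. Hence $c=1$, and $f=g$ a.e. because $\sigma$ is one-to-one. This is where the zero-mean condition in $\mathcal{F}(X)$ is genuinely needed. I also need the monotonicity of $\sigma$ assumed in Section~\ref{sec:data_processing}, not just injectivity. A continuous injective map on $\mathbb{R}$ is monotone anyway, and the decreasing case is handled the same way.

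\emph{Smoothness.} First, $T_\sigma$ is Fr\'echet smooth from $L^\infty(X)$ into $L^\infty(X)$. The Nemytskii operator $h\mapsto\sigma(h)$ is smooth on $L^\infty$ because $\sigma$ is smooth and $h$ takes values in a bounded interval, where all derivatives of $\sigma$ are bounded. The denominator $\int\sigma(h)$ is a smooth functional that is bounded below by $|X|\min_{[-\|h\|_\infty,\|h\|_\infty]}\sigma>0$, so the quotient is smooth. Moreover $T_\sigma h$ is bounded above and below by positive constants on bounded sets. Second, I would show that $(\alpha,\beta)\mapsto\mathrm{OT}_\varepsilon(\alpha,\beta)$ is smooth for densities in this class. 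Write it in dual form,
\[
\mathrm{OT}_\varepsilon(\alpha,\beta)=\max_{u,v}\int u\,d\alpha+\int v\,d\beta-\varepsilon\int\!\!\int \bigl(e^{(u(\mathbf{x})+v(\mathbf{y})-c(\mathbf{x},\mathbf{y}))/\varepsilon}-1\bigr)\,d\alpha\,d\beta .
\]
The Schr\"odinger potentials solve the Sinkhorn fixed-point equations, and I would apply the implicit function theorem in $C(X)\times C(X)$, modulo the constant shift. The linearized operator is $I$ minus a compact positive kernel operator whose only eigenvalue $1$ corresponds to constants. This gives smooth dependence of the potentials on $(\alpha,\beta)$, and by the envelope theorem the first variation of $\mathrm{OT}_\varepsilon$ is $(u,v)$. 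Composing with $T_\sigma$ gives smoothness of $S^2_{\sigma,\varepsilon}$ by the chain rule.

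\emph{Expected obstacle.} The main difficulty is the square root in the rooted $S_{\sigma,\varepsilon}$: it is smooth only where $S^2_{\sigma,\varepsilon}>0$, that is, for $f\ne g$. I would therefore state smoothness for the squared divergence everywhere, which is what enters the Gibbs risk $\Phi$, and for $S_{\sigma,\varepsilon}$ itself away from the diagonal. Carrying out the implicit-function step for the potentials is the main technical step. I would cite \cite{Feydy_etal:19} for it rather than redo it.
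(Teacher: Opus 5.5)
Your proposal follows the same route as the paper: you write $S_{\sigma,\varepsilon}(f,g)=S_\varepsilon(T_\sigma f,T_\sigma g)$ and import symmetry, definiteness and smoothness from Theorem~1 of \cite{Feydy_etal:19} through the smooth map $T_\sigma$. It is more careful than the paper's one-line proof at two points. First, injectivity of $\sigma$ alone only gives $\sigma(f)=c\,\sigma(g)$; your argument that the zero-mean constraint and the monotonicity of $\sigma$ force $c=1$ supplies the step the paper leaves implicit. Second, the rooted $S_{\sigma,\varepsilon}$ is generically not differentiable on the diagonal $f=g$. So you are right to restrict the smoothness claim to $S_{\sigma,\varepsilon}^2$ everywhere, which is what enters $\Phi$ and what the smoothness statement of Feydy et al.\ concerns, and to $S_{\sigma,\varepsilon}$ only off the diagonal.
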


\begin{proof}
This follows directly from the properties of the Sinkhorn divergence $S_\varepsilon$, as established in Theorem 1 of \cite{Feydy_etal:19}, together with the injectivity and smoothness of the normalization function $\sigma$. These ensure that the compositions $T_\sigma f$ and $T_\sigma g$ depend smoothly on $f$ and $g$, and the positive definiteness and symmetry of $S_\varepsilon$ carry over.
\end{proof}

\begin{corollary}\label{corollary_1}
Let $f, g \in \mathcal{F}(X)$. 
The symmetrized normalized squared Sinkhorn loss $\mathcal D_{\sigma,\varepsilon}^2(f,g)$, defined in \eqref{eq:sym_loss_cont} with a one-to-one, smooth normalization function $\sigma: \mathbb{R} \to \mathbb{R}_+$, is symmetric, positive definite, and smooth with respect to $f$ and $g$.
\end{corollary}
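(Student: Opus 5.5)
The plan is to reduce each claimed property of $\mathcal D_{\sigma,\varepsilon}^2$ to the corresponding property of $S_{\sigma,\varepsilon}$ from Proposition~\ref{thm:smoothness}, applied twice: once to $(f,g)$ and once to $(-f,-g)$. The first thing to check is that the second term makes sense. The negation map $h\mapsto -h$ preserves boundedness and the zero-mean condition, so $-f,-g\in\mathcal F(X)$ whenever $f,g\in\mathcal F(X)$. Hence $S_{\sigma,\varepsilon}(-f,-g)$ is well defined, and Proposition~\ref{thm:smoothness} applies to it verbatim.

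Symmetry is immediate. Since $S_{\sigma,\varepsilon}(f,g)=S_{\sigma,\varepsilon}(g,f)$ and $S_{\sigma,\varepsilon}(-f,-g)=S_{\sigma,\varepsilon}(-g,-f)$, the sum of the squares is symmetric in $(f,g)$.

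Positive definiteness follows from nonnegativity. Both terms are squares of nonnegative quantities, so $\mathcal D^2_{\sigma,\varepsilon}(f,g)\ge 0$, and it vanishes if and only if both terms vanish. If $f=g$, both terms are zero. Conversely, if $\mathcal D^2_{\sigma,\varepsilon}(f,g)=0$, then in particular $S_{\sigma,\varepsilon}(f,g)=0$, and positive definiteness from Proposition~\ref{thm:smoothness} gives $f=g$. Making this fully explicit uses the same fact the proposition rests on: $T_\sigma f=T_\sigma g$ forces $\sigma(f)=c\,\sigma(g)$ for some constant $c>0$, and together with injectivity of $\sigma$ and the zero-mean constraint this yields $f=g$ a.e.

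Smoothness comes from composition. The map $(f,g)\mapsto(-f,-g)$ is linear and bounded on $L^\infty$, hence smooth. So $S_{\sigma,\varepsilon}(-f,-g)$ is smooth as the composition of a smooth map with a linear one. Squaring and addition preserve smoothness, so $\mathcal D^2_{\sigma,\varepsilon}$ is smooth.

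The main obstacle is the point just mentioned: smoothness of the rooted quantity $S_{\sigma,\varepsilon}$ on the diagonal, where the square root is not differentiable at $0$. I would avoid this by working with the squared divergence $S_\varepsilon^2$ throughout. By Theorem 1 of \cite{Feydy_etal:19} and the smooth dependence of $T_\sigma f$ on $f$, the squared divergence is itself smooth. Moreover, $\mathcal D^2_{\sigma,\varepsilon}$ involves only squared divergences, so the smoothness claim never depends on the square root.
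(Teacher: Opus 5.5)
Your proposal is correct and follows the same route as the paper, whose proof is the one-line observation that $\mathcal D^2_{\sigma,\varepsilon}$ is a sum of two smooth, positive-definite terms supplied by Proposition~\ref{thm:smoothness} applied to $(f,g)$ and $(-f,-g)$. Your additional steps fill in details the paper leaves implicit rather than changing the argument: you check that $-f,-g\in\mathcal F(X)$, you use the zero-mean constraint to show that $T_\sigma f=T_\sigma g$ forces $f=g$, and you work with the squared divergence so that smoothness never depends on the square root, which is generally not differentiable on the diagonal.
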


\begin{proof}
Immediate from Proposition~\ref{thm:smoothness}, since
$\mathcal D_{\sigma,\varepsilon}^2$
is the sum of two smooth positive-definite terms.    
\end{proof}

\begin{proposition}\label{thm:asymptotic_convexity}
Let $f, g \in \mathcal{F}(X)$ and consider the softplus normalization $\sigma_\delta$ defined in \eqref{eq:softplus}. Then, as $\delta \to \infty$, the normalized Sinkhorn divergence $S_{\sigma_\delta,\varepsilon}(f,g)$ converges pointwise to
\[
S_\varepsilon\left(\frac{f^+}{\int_X f^+}, \, \frac{g^+}{\int_X g^+}\right),
\]
where $f^+(\mathbf{x}) = \max\{f(\mathbf{x}),0\}$. 
The limiting Sinkhorn divergence on these normalized positive parts of $f$ and $g$ is a well-defined, convex function of its arguments.
\end{proposition}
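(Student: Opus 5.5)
Note first that the raw softplus $\sigma_\delta(z)=\ln(1+e^{\delta z})$ does not itself converge to $z^+$. Its large-$\delta$ behaviour is $\sigma_\delta(z)\sim \delta z^+$ for $z>0$, and $\sigma_\delta(0)=\ln 2$. Since $T_\sigma$ is invariant under positive rescaling of $\sigma$, we may instead work with $\sigma_\delta/\delta$. This function converges to $z^+$ uniformly on $\mathbb{R}$:
\[
0\le \tfrac1\delta\ln(1+e^{\delta z})-z^+=\tfrac1\delta\ln(1+e^{-\delta|z|})\le \tfrac{\ln 2}{\delta}.
\]
The proof then has three steps: convergence of the normalized densities, continuity of $S_\varepsilon$, and convexity of the limit.

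\textbf{Step 1 (densities).} Because $f\in L^\infty(X)$, we get $\sigma_\delta(f)/\delta\to f^+$ uniformly on $X$, and hence in $L^1(X)$, since $X$ is compact and has finite Lebesgue measure. The normalizing constants satisfy $\int_X\sigma_\delta(f)/\delta\to\int_X f^+$. This limit is strictly positive: $f$ has zero mean, and (excluding the trivial case $f\equiv0$, which must be ruled out or handled separately) it therefore has a positive part of positive measure. Consequently $T_{\sigma_\delta}f\to f^+/\int_X f^+$ in $L^1(X)$. The same argument applies to $g$. In particular, the induced probability measures on $X$ converge in total variation, and therefore weakly.

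\textbf{Step 2 (continuity of $S_\varepsilon$).} For fixed $\varepsilon>0$ on a compact domain with the continuous cost $\|\mathbf{x}-\mathbf{y}\|^2$, the map $(\alpha,\beta)\mapsto \mathrm{OT}_\varepsilon(\alpha,\beta)$ is weakly continuous. This is established in \cite{Feydy_etal:19}, via uniform Lipschitz bounds on the Schrödinger potentials and Arzel\`a--Ascoli. Each of the three terms in \eqref{eq:Sinkhorn_continuous} therefore converges, and so does the square root, which gives the pointwise limit stated in Proposition~\ref{thm:asymptotic_convexity}.

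\textbf{Step 3 (well-definedness and convexity).} The limit measures are genuine probability measures, so the limiting divergence is well defined; the case $f\equiv0$ is the only degenerate one. For convexity, Theorem 1 of \cite{Feydy_etal:19} states that $S_\varepsilon^2$ is convex in each argument as a function of the measure. The convexity claim is therefore to be read with respect to the normalized measures $f^+/\!\int f^+$, not as convexity in $f$ itself, since the map $f\mapsto f^+/\!\int f^+$ is not linear.

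\textbf{Main obstacle.} Step 2 is the delicate point: it requires citing or re-deriving the weak continuity of $\mathrm{OT}_\varepsilon$, including uniqueness and equicontinuity of the dual potentials. A second issue is stating the convexity claim precisely, with convexity placed on $S_\varepsilon^2$ in the measure arguments. If rooted convexity is wanted instead, it would require a separate argument, which I would avoid by stating the result for $S_\varepsilon^2$.
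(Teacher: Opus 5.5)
Your argument follows the same route as the paper's proof: convergence of the normalized densities, passage to the limit inside $S_\varepsilon$, and convexity from Theorem~1 of \cite{Feydy_etal:19}. It is more careful than the paper in three places: the $1/\delta$ rescaling turns the paper's ill-posed ``$\lim_{\delta\to\infty}\sigma_\delta(f)=\delta f^+$'' into a genuine uniform limit, and you supply the positivity of $\int_X f^+$ and the weak continuity of $\mathrm{OT}_\varepsilon$, both of which the paper's ``Consequently'' takes for granted.

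Your restriction of the convexity claim to $S_\varepsilon^2$, convex in each measure argument separately, is exactly what the cited theorem delivers. It is also all the paper's proof supports. The rooted form is not convex in general: for example, $S_\varepsilon\bigl(t\delta_{\mathbf{x}}+(1-t)\delta_{\mathbf{y}},\delta_{\mathbf{x}}\bigr)\approx\sqrt{1-t}\,\|\mathbf{x}-\mathbf{y}\|$ when $\|\mathbf{x}-\mathbf{y}\|^2\gg\varepsilon$, which is not convex in $t$.
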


\begin{proof}
First, note that for $\delta \to \infty$, the softplus scaling satisfies \(\sigma_\delta(f(\mathbf{x})) = \ln(1 + e^{\delta f(\mathbf{x})}) \rightarrow \delta f(\mathbf{x})\) for $f(\mathbf{x})>0$, while for $f(\mathbf{x})<0$ the term converges to 0. We hence have:
\[
\lim_{\delta \to \infty} \sigma_\delta(f(\mathbf{x})) = 
\begin{cases}
0, & f^+(\mathbf{x})=0, \\[6pt]
\delta f^+(\mathbf{x}), & f^+(\mathbf{x})>0,
\end{cases}
\]
and the normalization integral behaves as
\[
\lim_{\delta \to \infty} \int_X \sigma_\delta(f(\mathbf{x}')) \, d\mathbf{x}' = \delta \int_X f^+(\mathbf{x}') \, d\mathbf{x}'.
\]
Therefore, the normalized density converges pointwise to
\[
\lim_{\delta \to \infty} T_{\sigma_{\delta}} f(\mathbf{x}) = \frac{f^+(\mathbf{x})}{\int_X f^+(\mathbf{x}') \, d\mathbf{x}'}.
\]
A similar argument gives
\[
\lim_{\delta \to \infty} T_{\sigma_{\delta}} g(\mathbf{x}) = \frac{g^+(\mathbf{x})}{\int_X g^+(\mathbf{x}') \, d\mathbf{x}'}.
\]
Consequently, the normalized Sinkhorn divergence converges to
\[
\lim_{\delta \to \infty} S_{\sigma_{\delta},\varepsilon}(f,g) = S_{\varepsilon}\left(\frac{f^+}{\int_X f^+}, \, \frac{g^+}{\int_X g^+}\right).
\]
Convexity of the limiting Sinkhorn divergence in $(f^+, g^+)$ follows from Theorem 1 of \cite{Feydy_etal:19} applied to these probability densities, implying the asymptotic convexity of the normalized Sinkhorn divergence.
\end{proof}

\begin{proposition}\label{thm:symmetrized_convexity}
Let $f, g \in \mathcal{F}(X)$ and consider the softplus normalization $\sigma_\delta$ defined in \eqref{eq:softplus}. Then, as $\delta \to \infty$, the symmetrized normalized squared Sinkhorn loss $\mathcal D_{\sigma_\delta,\varepsilon}^2(f,g)$ converges pointwise to
\[
S_\varepsilon^2\left(\frac{f^+}{\int_X f^+}, \, \frac{g^+}{\int_X g^+}\right) + S_\varepsilon^2\left(\frac{f^-}{\int_X f^-}, \, \frac{g^-}{\int_X g^-}\right),
\]
where  $f^+(\mathbf{x}) = \max\{f(\mathbf{x}),0\}$ and $f^-(\mathbf{x}) := \max\{-f(\mathbf{x}),0\}$. 
The limiting loss is a well-defined, convex function of its arguments.
\end{proposition}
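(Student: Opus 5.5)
The plan is to reduce the statement to Proposition~\ref{thm:asymptotic_convexity}, applied once to the pair $(f,g)$ and once to the pair $(-f,-g)$. By definition \eqref{eq:sym_loss_cont},
\[
\mathcal D_{\sigma_\delta,\varepsilon}^2(f,g) = S_{\sigma_\delta,\varepsilon}^2(f,g) + S_{\sigma_\delta,\varepsilon}^2(-f,-g).
\]
Since $f,g\in\mathcal F(X)$ implies $-f,-g\in\mathcal F(X)$, Proposition~\ref{thm:asymptotic_convexity} applies to both pairs.

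\textbf{Step 1 (the first term).} For the pair $(f,g)$, Proposition~\ref{thm:asymptotic_convexity} gives pointwise convergence of $S_{\sigma_\delta,\varepsilon}(f,g)$ to $S_\varepsilon(f^+/\!\int_X f^+,\, g^+/\!\int_X g^+)$. Because $t\mapsto t^2$ is continuous, the squares converge as well.

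\textbf{Step 2 (the second term).} For the pair $(-f,-g)$, note the identity $(-f)^+=\max\{-f,0\}=f^-$. Hence the same proposition yields convergence of $S_{\sigma_\delta,\varepsilon}(-f,-g)$ to $S_\varepsilon(f^-/\!\int_X f^-,\, g^-/\!\int_X g^-)$. Summing the two limits gives the stated pointwise limit.

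\textbf{Step 3 (well-definedness).} The normalizing integrals must be strictly positive. Here I use the zero-mean condition: if $f\in\mathcal F(X)$ is not identically zero, then $\int_X f^+ = \int_X f^- = \tfrac12\int_X |f| > 0$. The same holds for $g$. So both normalized positive parts and both normalized negative parts are genuine probability densities. They lie in $L^\infty$ on the compact set $X$, so each Sinkhorn term is finite. The degenerate case $f\equiv 0$ would be excluded or handled by convention (there $T_{\sigma_\delta}f$ is uniform for every $\delta$); I would state this assumption explicitly.

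\textbf{Step 4 (convexity).} By Theorem~1 of \cite{Feydy_etal:19}, $S_\varepsilon^2$ is convex in each of its probability-measure arguments. Here convexity is understood, as in Proposition~\ref{thm:asymptotic_convexity}, with respect to the normalized measures $f^\pm/\!\int f^\pm$ and $g^\pm/\!\int g^\pm$, not with respect to $f$ itself. The limiting loss is a sum of two such convex functions of disjoint groups of arguments: the positive-part densities and the negative-part densities. A sum of convex functions is convex, so the limiting loss is convex.

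\textbf{Main obstacle.} The main difficulty lies in the convergence step, which I inherit from Proposition~\ref{thm:asymptotic_convexity}. Pointwise convergence of the densities $T_{\sigma_\delta}f$ must be upgraded to convergence of the Sinkhorn divergence. To do this I would use the bound $|\sigma_\delta(z)/\delta - z^+|\le (\ln 2)/\delta$, which gives uniform, hence $L^1$, convergence of the normalized densities. That in turn gives weak convergence of the corresponding measures. Weak continuity of $\mathrm{OT}_\varepsilon$ on a compact $X$ then gives convergence of $S_\varepsilon$.

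If Proposition~\ref{thm:asymptotic_convexity} is taken as given, nothing beyond Steps~1--4 is needed. The only genuinely new point in the present statement is the identification $(-f)^+=f^-$ together with the positivity of $\int_X f^-$.
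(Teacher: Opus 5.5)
Your proposal is correct and follows the paper's proof: apply Proposition~\ref{thm:asymptotic_convexity} to $(f,g)$ and to $(-f,-g)$, sum the two limits, and conclude convexity because a sum of convex terms is convex. Your additions make explicit several points the paper's argument leaves implicit: the identity $(-f)^+=f^-$, positivity of $\int_X f^\pm$ from the zero-mean condition (with $f\equiv 0$ excluded), the uniform bound $|\sigma_\delta(z)/\delta - z^+|\le (\ln 2)/\delta$, and citing \cite{Feydy_etal:19} directly for the squared divergence $S_\varepsilon^2$.
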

\begin{proof}
Applying Proposition~\ref{thm:asymptotic_convexity} to the pair $(f,g)$ yields the asymptotic limit associated with the positive parts $f^+$ and $g^+$. Applying the same proposition to the pair $(-f,-g)$ yields the corresponding limit involving the negative parts $f^-$ and $g^-$. Summing the two limits gives the stated expression. 
Since each limiting Sinkhorn term is convex in its arguments by Proposition~\ref{thm:asymptotic_convexity}, and the sum of convex functions remains convex, the limiting symmetrized loss is also convex.    
\end{proof}

\begin{remark}[Asymptotic Convexity and Practical Implications]
Proposition~\ref{thm:asymptotic_convexity} and Proposition~\ref{thm:symmetrized_convexity} 
establish asymptotic convexity only in the limit $\delta\to\infty$. In particular, the normalized Sinkhorn divergence and the symmetrized loss converge pointwise to convex limiting functions defined through the positive and negative components of the signals. However, uniform convergence of the second derivatives with respect to $f$ and $g$ may fail due to potential discontinuities in the derivatives of $T_{\sigma_\delta}f$ and $T_{\sigma_\delta}g$ near the zeros of $f$ or $g$. Consequently, uniform convexity cannot generally be guaranteed for finite values of $\delta$. 
Nevertheless, Proposition~\ref{thm:symmetrized_convexity} provides theoretical support for the symmetrized loss introduced in Section~\ref{sec:inf_OT}. By incorporating transport information from both positive and negative signal excursions, the symmetrized construction inherits the asymptotic convexity of its constituent Sinkhorn terms while retaining sensitivity to sign-dependent features of oscillatory data. The numerical experiments of Section~\ref{sec:numerical_exp} further suggest substantially improved loss geometry even for moderate values of $\delta$.
\end{remark}

We now turn to another key property of the normalized Sinkhorn divergence: its Lipschitz continuity with respect to perturbations measured in the negative Sobolev norm.

\begin{proposition}\label{thm:Lipschitz}
Let $f, f' \in \mathcal{F}(X)$, and let $\sigma: \mathbb{R} \to \mathbb{R}_+$ be a smooth, strictly increasing normalization function. Then there exist positive constants $M_1, M_2 > 0$, depending only on $\|f\|_{L^\infty(X)}$, $\|f'\|_{L^\infty(X)}$, and $\varepsilon >0$ (with $M_1$, $M_2$ remaining bounded for $\varepsilon$ in any fixed interval $[\varepsilon_{\min}, \varepsilon_{\max}]$), such that
\[
S_{\sigma, \varepsilon}(f, f') = S_{\varepsilon}(T_{\sigma} f, T_{\sigma} f') \le M_1 \, \| T_{\sigma} f - T_{\sigma} f' \|_{\dot{H}^{-1}(X)} \le M_2 \, \| f - f' \|_{\dot{H}^{-1}(X)},
\]
where $S_{\sigma, \varepsilon}(f, f')$ denotes the normalized Sinkhorn divergence defined in \eqref{eq:Sinkhorn_density}.
\end{proposition}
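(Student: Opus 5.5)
Our plan is to prove the two inequalities separately, since they have different character: the first is a property of the Sinkhorn divergence between probability densities, and the second is a property of the nonlinear normalization map $T_\sigma$.

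\emph{First inequality.} We write $\mu=T_\sigma f$ and $\nu=T_\sigma f'$. Both are bounded above and below on $X$, since $\sigma$ is continuous and positive and $f,f'\in L^\infty$. We use the dual (potential) formulation of $\mathrm{OT}_\varepsilon$. The debiased quantity $S_\varepsilon^2(\mu,\nu)$ can be written as the integral along the segment $\mu_t=\nu+t(\mu-\nu)$ of the first variation:
\[
S_\varepsilon^2(\mu,\nu)=\int_0^1\!\int_X \psi_t\,(\mu-\nu)\,d\mathbf{x}\,dt .
\]
Here $\psi_t$ is a difference of entropic potentials (the Sinkhorn gradient of $\mathrm{OT}_\varepsilon(\mu_t,\nu)-\tfrac12\mathrm{OT}_\varepsilon(\mu_t,\mu_t)$). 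These potentials are smooth on compact $X$, and their gradients are bounded by $\operatorname{diam}(X)$-dependent constants for the quadratic cost; the potentials are $c$-transforms of a smooth kernel. The bound is uniform for $\varepsilon\in[\varepsilon_{\min},\varepsilon_{\max}]$. Because $\int(\mu-\nu)=0$, duality for $\dot H^{-1}$ gives
\[
S_\varepsilon^2\le \sup_t\|\psi_t\|_{\dot H^1}\,\|\mu-\nu\|_{\dot H^{-1}} .
\]
This yields an estimate linear in $\|\mu-\nu\|_{\dot H^{-1}}$ for $S_\varepsilon^2$, not for $S_\varepsilon$. To obtain the stated linear bound on $S_\varepsilon$ itself, we will use the comparison $S_\varepsilon^2\lesssim W_2^2$, which holds because the debiased divergence is dominated by the unregularized cost up to constants on bounded domains. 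We combine it with the known estimate
\[
W_2(\mu,\nu)\le 2\,\max\{\|\mu^{-1}\|_\infty\}^{1/2}\,\|\mu-\nu\|_{\dot H^{-1}}
\]
for densities bounded below (Peyre's $\dot H^{-1}$--$W_2$ equivalence). The lower bound on the densities depends only on $\|f\|_\infty$ and $\|f'\|_\infty$, and this determines $M_1$. This comparison step, controlling $S_\varepsilon$ by $W_2$ uniformly in $\varepsilon$ with the $\varepsilon$-dependent constant absorbed, is the main obstacle. If a clean domination fails, we fall back on the potential argument above combined with a lower bound $S_\varepsilon^2\gtrsim\|\mu-\nu\|^2_{\dot H^{-1}}$-type local estimate, which is the same kind of argument used to get Lipschitz bounds in the literature.

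\emph{Second inequality.} We bound $\|T_\sigma f-T_\sigma f'\|_{\dot H^{-1}}$ by duality: for $\phi$ with $\|\nabla\phi\|_2\le1$, subtract its mean (allowed since $T_\sigma f-T_\sigma f'$ has zero integral). Next we split
\[
T_\sigma f-T_\sigma f'=\frac{\sigma(f)-\sigma(f')}{Z_f}+\sigma(f')\Big(\frac1{Z_f}-\frac1{Z_{f'}}\Big),
\]
where $Z_h=\int_X\sigma(h)$. Since $\sigma(f)-\sigma(f')=a(\mathbf{x})(f-f')$ with $a=\int_0^1\sigma'(f'+s(f-f'))\,ds$ bounded by $\sup_{|z|\le R}\sigma'$, one must estimate $\int a(f-f')\phi$ against $\|f-f'\|_{\dot H^{-1}}$. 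This is delicate, because multiplication by a nonsmooth $a$ is not bounded on $\dot H^{-1}$. We will argue within the smooth setting, with $f,f'$ taken in the regularity class where $a\phi\in H^1$ with norm controlled by the $L^\infty$ data (this is where the constant $M_2$ absorbs $\|\sigma''\|$ on $[-R,R]$). The normalization-constant term is handled similarly, since $Z_f-Z_{f'}=\int a(f-f')$ is again a pairing with the test function $a$. Finally, $Z_f\ge|X|\min_{[-R,R]}\sigma>0$.
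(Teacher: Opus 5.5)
\textbf{The genuine gap is in your second inequality, and it cannot be closed under the stated hypotheses.} You are right that multiplication by $a$ is not bounded on $\dot H^{-1}$, but your remedy fails. You need $\|\nabla(a\tilde\phi)\|_{L^2}$, and for the normalization term $\|\nabla a\|_{L^2}$ (since $Z_f-Z_{f'}=\int_X(a-\bar a)(f-f')$), to be controlled by $L^\infty$ data. However, $\nabla a=\int_0^1\sigma''\bigl(f'+s(f-f')\bigr)\,\nabla\bigl(f'+s(f-f')\bigr)\,ds$ involves $\nabla f$ and $\nabla f'$, which no $L^\infty$ bound controls.

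This is not an artifact of your method. With constants depending only on $\|f\|_{L^\infty}$ and $\|f'\|_{L^\infty}$, the inequality is false already for the softplus $\sigma_\delta$, and indeed for any strictly convex $\sigma$. Take $X=[0,2\pi]$, $f'=0$ and $f_k(x)=(1+\cos x)\cos(kx)$ with $k\ge 2$. Then $f_k\in\mathcal F(X)$, $\|f_k\|_{L^\infty}=2$, and $\|f_k\|_{\dot H^{-1}(X)}=\|\int_0^x f_k\|_{L^2}=O(1/k)$. On the other hand, $\sigma(f_k)\rightharpoonup\bar\sigma(1+\cos x)$, where $\bar\sigma(A):=\frac{1}{2\pi}\int_0^{2\pi}\sigma(A\cos s)\,ds$ is strictly increasing for $A\ge 0$ because $\sigma'$ is strictly increasing. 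Hence $T_\sigma f_k$ converges weakly, and strongly in $\dot H^{-1}$ (the primitives converge uniformly), to a nonconstant density $\bar\rho$, whereas $T_\sigma 0$ is uniform. So $\|T_\sigma f_k-T_\sigma 0\|_{\dot H^{-1}}$ stays bounded away from zero while $\|f_k\|_{\dot H^{-1}}\to0$. By weak continuity and positive definiteness of $S_\varepsilon$, even $S_{\sigma,\varepsilon}(f_k,0)$ stays bounded away from zero. The mechanism is that the nonlinearity demodulates high-frequency content into low frequencies, which is exactly what $\dot H^{-1}$ detects.

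The paper's proof disposes of this step in one sentence (local $L^\infty$-Lipschitz continuity of $T_\sigma$ plus $\dot H^1$--$\dot H^{-1}$ duality), which fails for the same reason, so it does not supply the missing step either. Your duality computation does become correct if you add two hypotheses: $f,f'$ bounded in $W^{1,\infty}(X)$, and $X$ connected with a Poincar\'e inequality (to control $\|\tilde\phi\|_{L^2}$). Then $M_2$ also depends on $\|\nabla f\|_{L^\infty}$, $\|\nabla f'\|_{L^\infty}$ and $\sup_{[-R,R]}|\sigma''|$, which is a different statement from the proposition.

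For the first inequality you follow the paper's route: $S_\varepsilon\le C_\varepsilon W_2$, then Peyre's $W_2\lesssim\dot H^{-1}$ bound for densities bounded below. The paper likewise only asserts the comparison with $W_2$. Your fallback does not work, however, since a lower bound $S_\varepsilon^2\gtrsim\|\mu-\nu\|_{\dot H^{-1}}^2$ can never yield an upper bound on $S_\varepsilon$.

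The wrong power in your own potential representation can be fixed directly. Observe that $\psi_0$ is constant, because $\nu$ minimizes $S_\varepsilon^2(\cdot,\nu)$ and so the first variation vanishes there. Write $\int_X\psi_t(\mu-\nu)=\int_X(\psi_t-\psi_0)(\mu-\nu)$ and use the Lipschitz dependence of entropic potentials on the marginals, $\|\nabla(\psi_t-\psi_0)\|_{L^\infty}\le K_\varepsilon W_1(\mu_t,\nu)=K_\varepsilon\,t\,W_1(\mu,\nu)$. Combined with $W_1(\mu,\nu)\le|X|^{1/2}\|\mu-\nu\|_{\dot H^{-1}}$, this gives $S_\varepsilon^2(\mu,\nu)\le\frac12 K_\varepsilon|X|\,\|\mu-\nu\|_{\dot H^{-1}}^2$. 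Here $K_\varepsilon$ is bounded for $\varepsilon\in[\varepsilon_{\min},\varepsilon_{\max}]$, and no density lower bound is needed.
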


\begin{proof}
The first inequality follows from comparing the Sinkhorn divergence $S_{\varepsilon}$ with the quadratic Wasserstein distance $W_2$ between the normalized densities $\rho = T_\sigma f$ and $\rho' = T_\sigma f'$. Specifically, because the Sinkhorn divergence with entropic regularization $\varepsilon$ converges to the quadratic Wasserstein distance $W_2$ as $\varepsilon \to 0$, we have
\[
\lim_{\varepsilon \to 0} S_\varepsilon(\rho, \rho') = W_2(\rho, \rho'),
\]
uniformly over compactly supported, bounded densities. 
On bounded domains $X$, adding finite entropic regularization $\varepsilon > 0$ introduces a smoothing of the transport plan, but does not cause unbounded deviation from $W_2$. Concretely, Sinkhorn’s entropic regularization tends to reduce transport costs relative to the unregularized problem and adds a bias correction term to ensure that $S_\varepsilon(\rho, \rho) = 0$, preserving its interpretation as a true divergence. Therefore, there exists $C_\varepsilon \ge 1$, depending on $\varepsilon$ and the compactness of $X$, such that
\[
S_\varepsilon(\rho, \rho') \le C_\varepsilon W_2(\rho, \rho'),
\]
with $C_\varepsilon$ bounded for $\varepsilon$ in any fixed interval $[\varepsilon_{\min}, \varepsilon_{\max}]$.

On the other hand, since $W_2$ metrizes weak convergence and is equivalent (up to constants depending on bounds on densities) to the negative Sobolev norm $\dot{H}^{-1}$ on compact domains \cite{Peyre:2018}, we have
\[
W_2(\rho, \rho') \le C' \, \|\rho - \rho'\|_{\dot{H}^{-1}(X)},
\]
where $C'$ depends only on the diameter of $X$ and the essential bounds on $\rho, \rho'$. Composing the two gives the first inequality:
\[
S_\varepsilon(\rho, \rho') \le C_\varepsilon C' \, \|\rho - \rho'\|_{\dot{H}^{-1}(X)},
\]
establishing the constant $M_1 = C_\varepsilon C'$.

Finally, since the normalization operator $T_\sigma$ is locally Lipschitz in $L^\infty$ (due to the smoothness of $\sigma$), and by the definition of the $\dot{H}^{-1}$ semi-norm as the dual to $\dot{H}^1$, there exists a constant $L_\sigma > 0$ depending on the $L^\infty$ norms of $f, f'$ such that
\[
\|T_\sigma f - T_\sigma f'\|_{\dot{H}^{-1}(X)} \le L_\sigma \, \| f - f' \|_{\dot{H}^{-1}(X)},
\]
which yields the second inequality with $M_2 = M_1 L_\sigma$.
\end{proof}

\subsection{Well-Definedness of Symmetrized Sinkhorn-Gibbs Posteriors}
\label{sec:welldefinedness}

We now establish that the continuous symmetrized Sinkhorn-Gibbs posterior defines a valid probability measure on the compact parameter space $\Theta \subset \mathbb{R}^{n_p}$, ensuring the well-definedness of the proposed inference framework.

We first restate the continuous Gibbs posterior,
\begin{equation} \label{Gibbs_posterior_continuous2}
\Pi(d\boldsymbol{\theta}) =
\frac{
\exp\left(- \lambda \, \Phi(\boldsymbol{\theta}; g)\right)
\, \Pi_0(d\boldsymbol{\theta})
}{
\int_\Theta
\exp\left(- \lambda \, \Phi(\boldsymbol{\theta}; g)\right)
\, \Pi_0(d\boldsymbol{\theta})
},
\end{equation}
where the empirical risk functional is given by
\begin{equation}\label{eq:Sink_risk_cont}
\Phi(\boldsymbol{\theta}; g)
=
\mathcal D_{\sigma,\varepsilon}^2
\bigl(
f(\boldsymbol{\theta}),
g
\bigr),
\end{equation}
with $\mathcal D_{\sigma,\varepsilon}^2$ denoting the symmetrized normalized squared Sinkhorn loss introduced in \eqref{eq:sym_loss_cont}, and where the forward map $f:\Theta\to\mathcal F(X)$ assigns each parameter $\boldsymbol{\theta}$ to a function $f(\boldsymbol{\theta})$ on $X$. 
We note that together,
\eqref{Gibbs_posterior_continuous2}--\eqref{eq:Sink_risk_cont}
represent the continuous analogue of the symmetrized discrete posterior
\eqref{DSloss_normalized_symmetrized}
introduced in Section~\ref{sec:stat_interp}.

Since the Gibbs posterior \eqref{Gibbs_posterior_continuous2} mirrors the structure of Bayes' theorem, its well-definedness can be established by adapting standard arguments from Bayesian inverse problems \cite{Stuart:10,Dashti_Stuart:16,Sullivan:2017}.

\begin{theorem}[Well-definedness]
\label{well-defined}
Suppose $\Theta \subset \mathbb{R}^{n_p}$ is compact, and let $\Pi_0$ be a Borel probability measure on $\Theta$. Assume the forward map $f:\Theta\to\mathcal F(X)$ is continuous when $\mathcal F(X)$ is equipped with the $L^\infty$ norm topology. Then the symmetrized Sinkhorn risk $\Phi$ defined in \eqref{eq:Sink_risk_cont} satisfies:
\begin{itemize}
    \item[(i)]
    $\Phi(\cdot; g)$ is measurable on $\Theta$ for every
    $g \in \mathcal F(X)$;

    \item[(ii)]
    $\Phi(\boldsymbol{\theta}; g) \ge 0$
    for all
    $\boldsymbol{\theta} \in \Theta$
    and
    $g \in \mathcal F(X)$;

    \item[(iii)]
    There exists a constant
    $0 < C < \infty$,
    depending only on uniform bounds of
    $f$
    and
    $g$,
    such that
    $\Phi(\boldsymbol{\theta}; g) \le C$
    for all
    $\boldsymbol{\theta} \in \Theta$
    and
    $g \in \mathcal F(X)$.
\end{itemize}
Consequently, the symmetrized Sinkhorn-Gibbs posterior $\Pi$ defined by
\eqref{Gibbs_posterior_continuous2}--\eqref{eq:Sink_risk_cont}
is a well-defined Borel probability measure on $\Theta$.
\end{theorem}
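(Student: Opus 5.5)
We verify (i)--(iii) in turn and then deduce well-definedness by the usual Bayesian-inverse-problem argument: a nonnegative, bounded, measurable potential gives a normalizing constant that is strictly positive and finite.

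For (i), we show that $\boldsymbol{\theta}\mapsto\Phi(\boldsymbol{\theta};g)$ is continuous on $\Theta$, which implies Borel measurability. By assumption, $\boldsymbol{\theta}\mapsto f(\boldsymbol{\theta})$ is continuous into $L^\infty(X)$, so $\boldsymbol{\theta}\mapsto -f(\boldsymbol{\theta})$ is continuous as well. Since $\sigma$ is smooth and the values of $f(\boldsymbol{\theta})$ lie in a bounded interval, $\sigma$ is Lipschitz on that interval and bounded below there by a positive constant. This is where $\sigma>0$ is used: the softplus, for instance, is strictly positive. Hence the denominator $\int_X\sigma(f(\boldsymbol{\theta}))$ is bounded away from zero, and $T_\sigma f(\boldsymbol{\theta})$ depends continuously on $\boldsymbol{\theta}$ in $L^\infty$, hence in $L^1$ and weakly as measures. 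We then use that $\mathrm{OT}_\varepsilon$, and hence $S_\varepsilon^2$, is continuous for the weak topology on probability measures over a compact $X$ (Feydy et al., Theorem 1, invoked as in Proposition~\ref{thm:smoothness}). It follows that both terms of $\mathcal D^2_{\sigma,\varepsilon}(f(\boldsymbol{\theta}),g)$ are continuous in $\boldsymbol{\theta}$. Alternatively, Proposition~\ref{thm:Lipschitz} could be combined with $\|h\|_{\dot H^{-1}}\lesssim\|h\|_{L^2}$, but the weak-continuity route avoids that proposition's constants.

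Part (ii) is immediate: $S_\varepsilon^2\ge 0$ by the positive definiteness in Proposition~\ref{thm:smoothness}, and $\mathcal D^2_{\sigma,\varepsilon}$ is a sum of two such terms, as in Corollary~\ref{corollary_1}.

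For (iii), it is enough to bound $\mathrm{OT}_\varepsilon(\mu,\nu)$ uniformly over probability measures on $X$. Taking the product coupling $\mu\otimes\nu$, whose relative entropy term vanishes, gives
\[
\mathrm{OT}_\varepsilon(\mu,\nu)\le \int c\,d(\mu\otimes\nu)\le \operatorname{diam}(X)^2 .
\]
For the lower bound, the entropy term $-\varepsilon H(\pi)=\varepsilon\,\mathrm{KL}(\pi\,|\,\mu\otimes\nu)\ge 0$ and $c\ge 0$, so $\mathrm{OT}_\varepsilon\ge 0$. Therefore
\[
S_\varepsilon^2=\mathrm{OT}_\varepsilon(\mu,\nu)-\tfrac12\mathrm{OT}_\varepsilon(\mu,\mu)-\tfrac12\mathrm{OT}_\varepsilon(\nu,\nu)\le \operatorname{diam}(X)^2,
\]
and $\Phi\le C:=2\operatorname{diam}(X)^2$. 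This constant is even independent of $f$ and $g$, which is consistent with, and stronger than, the claimed dependence. Note that the entropy sign convention must be checked: with $H(\pi)$ the negative KL divergence, as defined in the paper, the bound holds.

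Finally, (ii) and (iii) give $e^{-\lambda C}\le e^{-\lambda\Phi}\le 1$. Integrating against the probability measure $\Pi_0$ yields a normalizing constant $Z\in[e^{-\lambda C},1]$, so $Z$ is positive and finite. Together with the measurability from (i), $\Pi$ is then a Borel probability measure absolutely continuous with respect to $\Pi_0$. The main obstacle is the continuity in (i), specifically justifying weak continuity of $\mathrm{OT}_\varepsilon$ and keeping the normalizing denominators of $T_\sigma$ positive. I would rely on the cited Sinkhorn regularity theory and the uniform $L^\infty$ bound on $f(\Theta)$, which holds because $\Theta$ is compact and $f$ is continuous.
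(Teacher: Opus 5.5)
Your proof is correct. For (i), (ii) and the final normalization step you follow the paper's route:
\begin{itemize}
\item continuity of $\boldsymbol{\theta}\mapsto\Phi(\boldsymbol{\theta};g)$ from continuity of $f$ and the Sinkhorn regularity of Feydy et al.;
\item nonnegativity of $S_\varepsilon^2$;
\item the bounds $e^{-\lambda C}\le Z\le 1$.
\end{itemize}
You usefully make explicit why $T_\sigma f(\boldsymbol{\theta})$ is continuous: $\sigma$ is bounded below by a positive constant on bounded intervals. This needs no extra hypothesis, since a continuous injective $\sigma\ge 0$ is strictly monotone and hence strictly positive.

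The genuine difference is in (iii). The paper argues that $f(\Theta)$ is compact in $L^\infty$ and $\mathcal D_{\sigma,\varepsilon}^2(\cdot,g)$ is continuous, so $\Phi(\cdot;g)$ is bounded on $\Theta$. That yields a constant uniform in $\boldsymbol{\theta}$ but attached to each fixed $g$. It also relies on compactness of $\Theta$, continuity of $f$, and Corollary~\ref{corollary_1}.

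Your route uses the estimate $0\le\mathrm{OT}_\varepsilon(\mu,\nu)\le\operatorname{diam}(X)^2$. The upper bound comes from the product coupling, and the lower bound from $c\ge 0$ and $\mathrm{KL}\ge 0$. Combined with nonnegativity of the self-terms, this gives $\Phi\le 2\operatorname{diam}(X)^2$. The computation is elementary, uses no continuity at all, and is independent of $f$, $g$, $\sigma$ and $\Theta$.

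This is strictly stronger than the paper's argument. It delivers the ``for all $\boldsymbol{\theta}$ and all $g$'' uniformity written in the statement. It also makes the bound $M$, used simultaneously for $Z$ and $Z'$ in the Hellinger estimate of the appendix, independent of the data.

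Your caveat about the entropy convention resolves correctly for this theorem. The continuous $H(\pi)$ is minus the KL divergence relative to $\mu\otimes\nu$, so it vanishes at the product coupling. With the discrete entropy $H(P)=-\sum_{i,j}P_{ij}(\log P_{ij}-1)$, the individual $T_\varepsilon$ terms shift by marginal-entropy constants. These constants cancel in the debiased combination, so the same bound holds in the discrete setting as well.
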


\begin{proof}
Non-negativity (ii) follows directly from the definition of the symmetrized loss
$\mathcal D_{\sigma,\varepsilon}^2$
and the fact that
$S_{\sigma,\varepsilon}^2 \ge 0$. 
By the continuity of the forward map $f$ and Corollary~\ref{corollary_1}, the symmetrized loss $\mathcal D_{\sigma,\varepsilon}^2
\bigl(f(\boldsymbol{\theta}),g\bigr)$ depends continuously on
$\boldsymbol{\theta}$
for each fixed
$g \in \mathcal F(X)$.
Hence the risk functional
$\Phi(\cdot;g)$
is continuous and therefore measurable, establishing (i). 
Moreover, since $\Theta$ is compact and $f$ is continuous, the image
$f(\Theta)$
is compact in
$\mathcal F(X)$.
The continuity of
$\mathcal D_{\sigma,\varepsilon}^2$
therefore implies that
$\Phi$
is uniformly bounded on
$\Theta$,
yielding (iii). 
These properties guarantee that the normalization constant
\[
Z :=
\int_{\Theta}
\exp\!\left(
-\lambda\,\Phi(\boldsymbol{\theta};g)
\right)
\,\Pi_0(d\boldsymbol{\theta})
\]
is strictly positive and finite, so that $\Pi$ defines a probability measure on $\Theta$ with Radon--Nikodým derivative
\[
\frac{d\Pi}{d\Pi_0}(\boldsymbol{\theta})
=
\frac{
\exp\!\left(
-\lambda\,\Phi(\boldsymbol{\theta};g)
\right)
}{Z}.
\]
Since $\Pi_0$ is a Radon measure on the compact metric space $\Theta$, $\Pi$ is also a Radon probability measure.
\end{proof}

This result follows closely the framework established for Bayesian inverse problems in Theorem 4.3 of \cite{Sullivan:2017}, adapted here to the symmetrized Sinkhorn-Gibbs posterior.

\subsection{Robustness of Symmetrized Sinkhorn-Gibbs Posteriors}
\label{sec:robustness}

Robustness concerns the stability of the posterior distribution with respect to perturbations in the observed data and the forward model, a critical property for reliable uncertainty quantification in inverse problems. While the well-definedness of the symmetrized Sinkhorn-Gibbs posterior follows from standard arguments in Bayesian inverse problems, establishing robustness requires additional regularity properties of the underlying loss function. 
Unlike posterior constructions based on genuine metrics—such as the Wasserstein distance employed in \cite{Motamed_Appelo:19,DunlopYang2022}—the Sinkhorn divergence does not satisfy a triangle inequality, preventing a direct application of classical stability arguments. Instead, we exploit the smoothness and $\dot H^{-1}$-regularity properties established in Section~\ref{sec:convexity}. We first establish a Lipschitz stability result for the normalized Sinkhorn divergence itself and then extend it to the symmetrized Sinkhorn risk introduced in Section~\ref{sec:stat_interp}. These estimates provide the key ingredients for proving stability of the resulting Gibbs posterior under perturbations of both the observed data and the forward model.

\begin{proposition}[$\dot H^{-1}$-Lipschitz Stability of the Normalized Sinkhorn Divergence]
\label{prop:Lipschitz_stability}
Let $f \in \mathcal{F}(X)$ be fixed, and let $g,g' \in \mathcal{F}(X)$ be mean-zero functions. Assume that $f$, $g$, and $g'$ belong to an $L^\infty$-bounded subset of $\mathcal{F}(X)$ and that the normalized densities $T_\sigma f$, $T_\sigma g$, and $T_\sigma g'$ are bounded above and below by positive constants. Then there exists a constant $C_\varepsilon>0$, depending on $\varepsilon$, $X$, $\sigma$, and the corresponding bounds, such that
\[
\left|
S_{\sigma,\varepsilon}(f,g)
-
S_{\sigma,\varepsilon}(f,g')
\right|
\le
C_\varepsilon
\|g-g'\|_{\dot H^{-1}(X)}.
\]
\end{proposition}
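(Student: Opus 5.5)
The plan is to avoid the triangle inequality, which $S_\varepsilon$ lacks, and to work with the squared divergence $S_\varepsilon^2$. This quantity is a smooth functional of its second argument with an explicit first variation. Set $\rho = T_\sigma f$, $\beta = T_\sigma g$ and $\beta' = T_\sigma g'$. For $t\in[0,1]$ consider the interpolation $\beta_t = (1-t)\beta + t\beta'$; it is again a probability density bounded above and below by the same positive constants. By the differentiability of the debiased Sinkhorn divergence (Theorem 1 of \cite{Feydy_etal:19}, already used in Proposition~\ref{thm:smoothness}), we have
\[
\frac{d}{dt} S_\varepsilon^2(\rho,\beta_t) = \int_X \bigl(\psi_t - \phi_t\bigr)\,(\beta' - \beta)\, d\mathbf{x}.
\]
Here $\psi_t$ is the Sinkhorn potential of $\mathrm{OT}_\varepsilon(\rho,\beta_t)$ associated with the second marginal, and $\phi_t$ is the symmetric potential of $\mathrm{OT}_\varepsilon(\beta_t,\beta_t)$. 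Because $\beta'-\beta$ has zero mean, the additive ambiguity of the potentials is harmless. The duality defining the $\dot H^{-1}$ seminorm then gives
\[
\bigl|S_\varepsilon^2(\rho,\beta)-S_\varepsilon^2(\rho,\beta')\bigr| \le \sup_{t\in[0,1]} \|\nabla(\psi_t-\phi_t)\|_{L^2(X)}\,\|\beta-\beta'\|_{\dot H^{-1}(X)}.
\]

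The next step is a uniform gradient bound on the potentials. The entropic potentials satisfy the Schrödinger fixed-point equations, $\psi(\mathbf{y}) = -\varepsilon\log\int_X e^{(\varphi(\mathbf{x})-\|\mathbf{x}-\mathbf{y}\|^2)/\varepsilon}\rho(\mathbf{x})\,d\mathbf{x}$. Differentiating under the integral shows that $\nabla\psi(\mathbf{y})$ is a weighted average of $2(\mathbf{y}-\mathbf{x})$, so $|\nabla\psi|\le 2\,\mathrm{diam}(X)$. This bound is uniform in $t$, in $\varepsilon$ and in the densities, and the same holds for $\phi_t$. Hence the supremum above is at most $4\,\mathrm{diam}(X)\,|X|^{1/2}$. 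Combining this with the second inequality of Proposition~\ref{thm:Lipschitz}, namely $\|T_\sigma g - T_\sigma g'\|_{\dot H^{-1}} \le L_\sigma\|g-g'\|_{\dot H^{-1}}$ with $L_\sigma$ depending on the $L^\infty$ bounds, yields a Lipschitz bound for the squared quantity:
\[
|S_{\sigma,\varepsilon}^2(f,g)-S_{\sigma,\varepsilon}^2(f,g')|\le K\,\|g-g'\|_{\dot H^{-1}(X)}.
\]

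The main obstacle is passing from the square to the root $S_{\sigma,\varepsilon}$ itself. We have $|a-b| = |a^2-b^2|/(a+b)$, and this bound degenerates when both divergences are small, in particular near $g=f$. At that point the first-order term vanishes and a second-order argument is needed. My first attempt would be to refine the estimate into the form $|a^2-b^2|\le K'(a+b)\|\beta-\beta'\|_{\dot H^{-1}}$. For this, note that $\psi_t-\phi_t$ vanishes identically when $\beta_t=\rho$, and try to bound $\|\nabla(\psi_t-\phi_t)\|_{L^2}$ by a multiple of $S_\varepsilon(\rho,\beta_t)$. The tool would be the local equivalence, for densities with two-sided bounds, of $S_\varepsilon$ with a weighted $\dot H^{-1}$ norm (the linearization of Sinkhorn around a fixed density, as in the $W_2$–$\dot H^{-1}$ comparison of \cite{Peyre:2018}). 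The two-sided density bounds in the hypotheses are exactly what this comparison requires, and the resulting constant $C_\varepsilon$ depends on $\varepsilon$, $X$, $\sigma$ and these bounds, as claimed. As a fallback, the same potential-gradient bound yields $|a-b|\le\sqrt{K\|g-g'\|}$ trivially, and the Lipschitz bound for the squared loss already suffices for the posterior stability argument that follows.
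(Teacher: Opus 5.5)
\textbf{Verdict: genuine gap.} The step that breaks is the last one, which you import from Proposition~\ref{thm:Lipschitz}: the claim $\|T_\sigma g-T_\sigma g'\|_{\dot H^{-1}(X)}\le L_\sigma\|g-g'\|_{\dot H^{-1}(X)}$. This is false for any strictly convex $\sigma$, including the softplus, and the proposition fails with it.

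The mechanism is that a $\dot H^{-1}$-small perturbation can be an $O(1)$ high-frequency oscillation. The nonlinearity of $\sigma$ rectifies such an oscillation into an $O(1)$ low-frequency change of $T_\sigma g$. Here is a concrete counterexample.
\begin{itemize}
\item On $X=[0,2\pi]$, take $f=g'=0$ and $g_k(x)=\sin(kx)\,\mathbf 1_{[0,\pi]}(x)$ with $k$ even.
\item These functions are continuous, mean-zero and bounded by $1$. Their normalized densities lie between $\sigma(-1)/(2\pi\sigma(1))$ and $\sigma(1)/(2\pi\sigma(-1))$ uniformly in $k$. So all hypotheses hold with $k$-independent bounds.
\item Integrating by parts against $(1-\cos kx)/k$, which vanishes at $0$ and $\pi$, gives $\|g_k\|_{\dot H^{-1}}\le 2\sqrt{\pi}/k\to 0$.
\item Yet $T_\sigma g_k$ converges weakly to a density proportional to $\bar\sigma\,\mathbf 1_{[0,\pi]}+\sigma(0)\,\mathbf 1_{(\pi,2\pi]}$, where $\bar\sigma=\frac{1}{2\pi}\int_0^{2\pi}\sigma(\sin s)\,ds>\sigma(0)$ by strict Jensen. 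This limit is not uniform.
\item By weak continuity and positive definiteness of $S_\varepsilon$ \cite{Feydy_etal:19}, $S_{\sigma,\varepsilon}(0,g_k)$ tends to a positive limit, while $S_{\sigma,\varepsilon}(0,0)=0$. So no constant $C_\varepsilon$ can work.
\end{itemize}
The paper's own proof uses exactly the same normalization step, so this is a defect you share with it rather than an idea you missed.

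What your potential argument does establish rigorously is the density-level estimate
\[
\bigl|S_\varepsilon^2(\rho,\beta)-S_\varepsilon^2(\rho,\beta')\bigr|\le 4\,\mathrm{diam}(X)\,|X|^{1/2}\,\|\beta-\beta'\|_{\dot H^{-1}(X)},
\]
and it holds even without the two-sided density bounds. Combined with Poincar\'e and the $L^2$-Lipschitz continuity of $T_\sigma$ on $L^\infty$-bounded sets, it gives a correct bound in $\|g-g'\|_{L^2}$ instead of $\|g-g'\|_{\dot H^{-1}}$. The Hellinger arguments downstream only use $\mathcal D^2_{\sigma,\varepsilon}$, so this replacement suffices for them, with $\dot H^{-1}$ replaced by $L^2$ in their conclusions.

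Separately, you never carry out the passage from $S_\varepsilon^2$ to $S_\varepsilon$. Everything rests on the inequality $\|\nabla(\psi_t-\phi_t)\|_{L^2}\le K\,S_\varepsilon(\rho,\beta_t)$, which you only announce. The comparison in \cite{Peyre:2018} concerns $W_2$, not $S_\varepsilon$ at fixed $\varepsilon>0$, so it does not supply this bound. Your fallback gives only a H\"older-$\tfrac12$ estimate.

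You are right that this is exactly where the paper's first step is unjustified. Its claim that smoothness of $\nu\mapsto S_\varepsilon(\rho,\nu)$ yields a $W_2$-Lipschitz bound glosses over the square-root singularity at $\nu=\rho$. Still, as it stands your proposal proves neither the rooted estimate nor, because of the $T_\sigma$ step, any estimate in $\|g-g'\|_{\dot H^{-1}}$.
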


\begin{proof}
Set $\rho := T_\sigma f$, $\nu := T_\sigma g$, and $\nu' := T_\sigma g'$. Under the stated assumptions, these densities belong to a bounded subset of strictly positive probability densities supported on the compact domain $X$. 
For fixed $\rho$, the map $\nu \mapsto S_\varepsilon(\rho,\nu)$ is smooth on this class of densities. This follows from the smooth dependence of the entropic optimal transport potentials on the input measures \cite[Theorem~1]{Feydy_etal:19}. Hence, on bounded subsets, this map is locally Lipschitz with respect to the Wasserstein distance $W_2$, which metrizes weak convergence and thus provides a natural scale for Lipschitz-type estimates in the space of probability measures. In particular, there exists $L_\varepsilon>0$ such that
\[
\left|
S_\varepsilon(\rho,\nu)
-
S_\varepsilon(\rho,\nu')
\right|
\le
L_\varepsilon \,
W_2(\nu,\nu').
\]
For probability densities supported on a compact domain and bounded above and below by positive constants, the Wasserstein distance is controlled by the negative Sobolev norm \cite{Peyre:2018}. Thus,
\[
W_2(\nu,\nu')
\le
C_{X,M}
\|\nu-\nu'\|_{\dot H^{-1}(X)},
\]
where $C_{X,M}$ depends on the domain and the density bounds. 
Finally, since $\sigma$ is smooth and strictly increasing, the normalization map $T_\sigma$ is Lipschitz on bounded subsets of $\mathcal F(X)$ in the $\dot H^{-1}$ topology, using the same normalization argument as in Proposition~\ref{thm:Lipschitz}. Hence,
\[
\|T_\sigma g-T_\sigma g'\|_{\dot H^{-1}(X)}
\le
C_\sigma
\|g-g'\|_{\dot H^{-1}(X)}.
\]
Combining the three estimates gives
\[
\left|
S_{\sigma,\varepsilon}(f,g)
-
S_{\sigma,\varepsilon}(f,g')
\right|
\le
C_\varepsilon
\|g-g'\|_{\dot H^{-1}(X)}.
\]
\end{proof}

\begin{corollary}[$\dot H^{-1}$-Lipschitz Stability of the Symmetrized Sinkhorn Risk]
\label{cor:sym_risk_stability}
Under the assumptions of Proposition~\ref{prop:Lipschitz_stability}, there exists a constant $C_\varepsilon>0$ such that
\[
\left|
\mathcal D_{\sigma,\varepsilon}^{2}(f,g)
-
\mathcal D_{\sigma,\varepsilon}^{2}(f,g')
\right|
\le
C_\varepsilon
\|g-g'\|_{\dot H^{-1}(X)}.
\]
\end{corollary}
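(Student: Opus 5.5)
The plan is to write the difference of squares as a product and to treat the two terms of the symmetrized loss separately. For each term,
\[
S_{\sigma,\varepsilon}^2(f,g)-S_{\sigma,\varepsilon}^2(f,g')
=\bigl(S_{\sigma,\varepsilon}(f,g)+S_{\sigma,\varepsilon}(f,g')\bigr)\bigl(S_{\sigma,\varepsilon}(f,g)-S_{\sigma,\varepsilon}(f,g')\bigr),
\]
and similarly for the pair $(-f,-g)$, $(-f,-g')$. By the triangle inequality,
\[
\bigl|\mathcal D_{\sigma,\varepsilon}^{2}(f,g)-\mathcal D_{\sigma,\varepsilon}^{2}(f,g')\bigr|
\le A_+\,|S_{\sigma,\varepsilon}(f,g)-S_{\sigma,\varepsilon}(f,g')|
+A_-\,|S_{\sigma,\varepsilon}(-f,-g)-S_{\sigma,\varepsilon}(-f,-g')|,
\]
where $A_\pm$ denotes the corresponding sums of Sinkhorn values.

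First, I bound the prefactors $A_\pm$ uniformly. Proposition~\ref{thm:Lipschitz} gives $S_{\sigma,\varepsilon}(f,g)\le M_2\|f-g\|_{\dot H^{-1}(X)}$. On the compact domain $X$ and the $L^\infty$-bounded class, $\|h\|_{\dot H^{-1}(X)}\le C_X\|h\|_{L^2(X)}$ for mean-zero $h$, by Poincar\'e duality. Alternatively, one can use directly that $S_\varepsilon^2$ of two probability measures on $X$ is at most $\mathrm{diam}(X)^2$ plus an $\varepsilon$-dependent entropic term that is bounded when the densities are bounded above and below. Either route gives $A_\pm\le 2K$, with $K$ depending only on $\varepsilon$, $X$, $\sigma$ and the assumed bounds.

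Second, I apply Proposition~\ref{prop:Lipschitz_stability} to each difference. For the positive term this is immediate. For the negated term, I check that the hypotheses transfer to $(-f,-g,-g')$. These functions are still mean-zero and lie in the same $L^\infty$ ball. The densities $T_\sigma(-f)$, $T_\sigma(-g)$ and $T_\sigma(-g')$ are bounded above and below by positive constants, because $\sigma$ is continuous and strictly positive on the compact interval $[-R,R]$ containing the ranges, which gives a lower bound on $\sigma$ and hence on the normalized densities. Finally, $\|(-g)-(-g')\|_{\dot H^{-1}}=\|g-g'\|_{\dot H^{-1}}$, since the seminorm is absolutely homogeneous. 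This yields a constant $C'_\varepsilon$, possibly different from the one for the positive term, and I take the maximum of the two.

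Combining the two steps gives the claim with $C_\varepsilon=2K\max(C_\varepsilon^{+},C_\varepsilon^{-})\cdot 2$, which can be absorbed into a single constant. I expect the main obstacle to be the uniform bound on $A_\pm$. It requires a bound on the Sinkhorn divergence itself that is independent of $g$, and it requires verifying that the density bounds hypothesized for $T_\sigma$ of the original functions also hold for the negated ones. My first approach is the argument above: the bounds follow from $\sigma>0$ being continuous on the range interval, so no additional assumption is needed.
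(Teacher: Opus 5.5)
Your proposal is correct and follows the paper's proof. Both factor each difference via $|a^2-b^2|=|a-b|\,|a+b|$, apply Proposition~\ref{prop:Lipschitz_stability} to $(f,g,g')$ and to $(-f,-g,-g')$, and use uniform boundedness of $S_{\sigma,\varepsilon}$ on bounded sets. Your extra checks make explicit what the paper leaves implicit: that the density bounds for the negated triple follow automatically from $\sigma>0$ being continuous on $[-R,R]$, and that $A_\pm$ is bounded, e.g.\ via $S_\varepsilon^2\le \mathrm{diam}(X)^2$, which holds because $\mathrm{OT}_\varepsilon\ge 0$ and the product coupling is admissible.
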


\begin{proof}
By definition,
\[
\mathcal D_{\sigma,\varepsilon}^{2}(f,g)
=
S_{\sigma,\varepsilon}^{2}(f,g)
+
S_{\sigma,\varepsilon}^{2}(-f,-g).
\]
Applying Proposition~\ref{prop:Lipschitz_stability} to the pairs $(f,g)$ and $(f,g')$, and similarly to $(-f,-g)$ and $(-f,-g')$, gives
\[
\left|
S_{\sigma,\varepsilon}(f,g)
-
S_{\sigma,\varepsilon}(f,g')
\right|
\le
C_1
\|g-g'\|_{\dot H^{-1}(X)},
\]
and
\[
\left|
S_{\sigma,\varepsilon}(-f,-g)
-
S_{\sigma,\varepsilon}(-f,-g')
\right|
\le
C_2
\|g-g'\|_{\dot H^{-1}(X)}.
\]
Using the identity $|a^2-b^2|=|a-b|\,|a+b|$ 
together with the boundedness of the Sinkhorn divergences on bounded subsets of $\mathcal F(X)$, yields
\[
\left|
S_{\sigma,\varepsilon}^{2}(f,g)
-
S_{\sigma,\varepsilon}^{2}(f,g')
\right|
\le
C_3
\|g-g'\|_{\dot H^{-1}(X)},
\]
and similarly
\[
\left|
S_{\sigma,\varepsilon}^{2}(-f,-g)
-
S_{\sigma,\varepsilon}^{2}(-f,-g')
\right|
\le
C_4
\|g-g'\|_{\dot H^{-1}(X)}.
\]
Summing the two inequalities gives the result.
\end{proof}

Having established the $\dot H^{-1}$-Lipschitz stability of the symmetrized Sinkhorn risk, we now quantify the impact of perturbations on the posterior itself. Following \cite{Stuart:10,Sullivan:2017}, we measure distances between posterior distributions using the Hellinger distance. Specifically, the Hellinger distance between two Borel probability measures $\Pi$ and $\Pi'$ on $\Theta$ is defined by
\[
d_{\mathrm H}(\Pi,\Pi')^2
=
\int_\Theta
\left(
\sqrt{\frac{d\Pi}{d\Pi_0}(\boldsymbol\theta)}
-
\sqrt{\frac{d\Pi'}{d\Pi_0}(\boldsymbol\theta)}
\right)^2
\Pi_0(d\boldsymbol\theta),
\]
where $\Pi_0$ denotes the prior measure. 
This framework allows us to establish robustness of the symmetrized Sinkhorn-Gibbs posterior with respect to perturbations in both the observed data and the forward model.

\begin{theorem}
\label{stability_data}
(Robustness to Observed Data Perturbations) 
Let $f:\Theta\rightarrow\mathcal F(X)$ be continuous, and suppose $\Pi_0$ is a Borel probability measure on $\Theta$. Let $\Pi$ and $\Pi'$ denote the symmetrized Sinkhorn-Gibbs posteriors corresponding to observed data $g$ and perturbed data $g'$, respectively. Then
\[
d_{\mathrm H}(\Pi,\Pi')
\le
C
\|g-g'\|_{\dot H^{-1}(X)},
\]
where the constant $C$ depends only on $\varepsilon$, $\lambda$, and uniform bounds on $f$, $g$, and $g'$.
\end{theorem}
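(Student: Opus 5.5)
The plan is the standard Stuart/Sullivan Hellinger-stability argument for Bayesian inverse problems, with the likelihood replaced by the Gibbs factor $\exp(-\lambda\Phi)$. The data-to-risk Lipschitz estimate comes from Corollary~\ref{cor:sym_risk_stability}, and the uniform bounds on the risk come from Theorem~\ref{well-defined}.

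Write $\Phi(\boldsymbol\theta)=\mathcal D^2_{\sigma,\varepsilon}(f(\boldsymbol\theta),g)$ and $\Phi'(\boldsymbol\theta)=\mathcal D^2_{\sigma,\varepsilon}(f(\boldsymbol\theta),g')$. Let $Z$ and $Z'$ be the corresponding normalizing constants. By Theorem~\ref{well-defined}(ii)--(iii) there is a constant $C_\Phi$, depending on uniform bounds of $f,g,g'$, with $0\le\Phi,\Phi'\le C_\Phi$ on $\Theta$. Hence
\[
Z,\,Z'\in\bigl[e^{-\lambda C_\Phi},1\bigr].
\]
Since $f$ is continuous on the compact set $\Theta$, the image $f(\Theta)$ is bounded in $L^\infty$. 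Together with the bounds on $g,g'$, the hypotheses of Proposition~\ref{prop:Lipschitz_stability} hold uniformly in $\boldsymbol\theta$. I expect the density bounds on $T_\sigma f(\boldsymbol\theta)$, $T_\sigma g$, $T_\sigma g'$ to follow from $L^\infty$ bounds, because $\sigma$ is continuous and positive. Corollary~\ref{cor:sym_risk_stability} then gives, uniformly in $\boldsymbol\theta$,
\[
|\Phi(\boldsymbol\theta)-\Phi'(\boldsymbol\theta)|\le C_\varepsilon\|g-g'\|_{\dot H^{-1}(X)}.
\]

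Next, split the Hellinger distance in the usual way:
\[
2\,d_{\mathrm H}(\Pi,\Pi')^2\le I_1+I_2,
\]
where
\[
I_1=\frac{2}{Z}\int_\Theta\Bigl(e^{-\lambda\Phi/2}-e^{-\lambda\Phi'/2}\Bigr)^2\,d\Pi_0,
\qquad
I_2=2\bigl|Z^{-1/2}-Z'^{-1/2}\bigr|^2\int_\Theta e^{-\lambda\Phi'}\,d\Pi_0 .
\]

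For $I_1$, use $|e^{-a}-e^{-b}|\le|a-b|$ for $a,b\ge0$. This gives
\[
I_1\le \frac{\lambda^2}{2Z}\,C_\varepsilon^2\|g-g'\|^2_{\dot H^{-1}(X)}.
\]

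For $I_2$, first bound the difference of normalizing constants in the same way:
\[
|Z-Z'|\le\lambda C_\varepsilon\|g-g'\|_{\dot H^{-1}(X)}.
\]
Then apply the mean value theorem to $t\mapsto t^{-1/2}$ on $[e^{-\lambda C_\Phi},1]$. This yields
\[
|Z^{-1/2}-Z'^{-1/2}|\le \tfrac12 e^{3\lambda C_\Phi/2}|Z-Z'|,
\]
and the remaining integral is at most $1$.

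Combining the two estimates gives
\[
d_{\mathrm H}(\Pi,\Pi')\le C\|g-g'\|_{\dot H^{-1}(X)},
\]
with $C$ depending only on $\lambda$, $C_\varepsilon$ and $C_\Phi$, that is, on $\varepsilon$, $\lambda$ and the uniform bounds. This is the claimed form.

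The algebra above is routine. The main obstacle is making the application of Corollary~\ref{cor:sym_risk_stability} uniform over $\boldsymbol\theta\in\Theta$. The constant in Proposition~\ref{prop:Lipschitz_stability} depends on the upper and lower density bounds of $T_\sigma f(\boldsymbol\theta)$, so I must check these are uniform. I would argue it as follows. With $R=\sup_{\boldsymbol\theta}\|f(\boldsymbol\theta)\|_{L^\infty}$, we have $\sigma(f(\boldsymbol\theta)(\mathbf x))\in[\sigma(-R),\sigma(R)]$ with $\sigma(-R)>0$. Therefore $T_\sigma f(\boldsymbol\theta)$ lies in
\[
\bigl[\sigma(-R)/(|X|\sigma(R)),\;\sigma(R)/(|X|\sigma(-R))\bigr].
\]
The same holds for $\pm g$ and $\pm g'$, which covers the negated pairs in the symmetrized loss. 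This makes every constant uniform.
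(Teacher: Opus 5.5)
Your proposal is correct and follows essentially the same route as the paper's appendix proof. Theorem~\ref{well-defined} bounds the risks, which gives $Z,Z'\ge e^{-\lambda M}$; Corollary~\ref{cor:sym_risk_stability} gives the uniform bound on $|\Phi-\Phi'|$; and $\sqrt{p}-\sqrt{p'}$ is split into an exponential-factor term and a normalizing-constant term. The only differences are cosmetic: the paper divides the first term by $\sqrt{Z'}$ rather than $\sqrt{Z}$, and under the paper's Hellinger convention (no factor $\tfrac12$) your left-hand side $2\,d_{\mathrm H}^2$ should read $d_{\mathrm H}^2$, which only changes constants. Your explicit check, via $\sigma(-R)>0$, that the density bounds required by Proposition~\ref{prop:Lipschitz_stability} hold uniformly in $\boldsymbol\theta$ (including the negated pairs) fills in a step the paper merely asserts.
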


\begin{proof}
By Corollary~\ref{cor:sym_risk_stability}, the risk functional satisfies
\[
|\Phi(\boldsymbol{\theta};g)-\Phi(\boldsymbol{\theta};g')|
\le
C_\varepsilon
\|g-g'\|_{\dot H^{-1}(X)}
\]
uniformly in $\boldsymbol{\theta}\in\Theta$. The result then follows from the standard Hellinger stability argument for Gibbs posteriors, which controls changes in the exponential weights and normalization constants in terms of uniform perturbations of the risk. For completeness, the full derivation is provided in Appendix~\ref{app:hellinger_stability}.
\end{proof}

\begin{theorem}
\label{stability_forward}
(Robustness to Forward Model Perturbations) 
Let $f:\Theta\rightarrow\mathcal F(X)$ be a continuous forward map, and let $f^h:\Theta\rightarrow\mathcal F(X)$ denote an approximation of $f$, where $h$ represents a discretization or approximation parameter. Suppose $\Pi_0$ is a Borel probability measure on $\Theta$, and let $g\in\mathcal F(X)$ be fixed observed data. Let $\Pi$ and $\Pi^h$ denote the corresponding symmetrized Sinkhorn-Gibbs posteriors generated by $f$ and $f^h$, respectively. Then
\[
d_{\mathrm H}(\Pi,\Pi^h)
\le
C
\max_{\boldsymbol\theta\in\Theta}
\|f(\boldsymbol\theta)-f^h(\boldsymbol\theta)\|_{\dot H^{-1}(X)},
\]
where the constant $C$ depends only on $\varepsilon$, $\lambda$, and uniform bounds on $f$, $f^h$, and $g$.
\end{theorem}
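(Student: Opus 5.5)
The argument follows the proof of Theorem~\ref{stability_data}, with the roles of the two arguments of the risk exchanged. The first step is a uniform-in-$\boldsymbol\theta$ bound on the risk difference,
\[
|\Phi(\boldsymbol\theta;g)-\Phi^h(\boldsymbol\theta;g)|
=\bigl|\mathcal D^2_{\sigma,\varepsilon}(f(\boldsymbol\theta),g)-\mathcal D^2_{\sigma,\varepsilon}(f^h(\boldsymbol\theta),g)\bigr|
\le C_\varepsilon\,\|f(\boldsymbol\theta)-f^h(\boldsymbol\theta)\|_{\dot H^{-1}(X)}.
\]
To get it, we use the symmetry of $S_{\sigma,\varepsilon}$ from Proposition~\ref{thm:smoothness}, which gives $S_{\sigma,\varepsilon}(f,g)=S_{\sigma,\varepsilon}(g,f)$. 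We then apply Proposition~\ref{prop:Lipschitz_stability} with $g$ as the fixed argument and with $f(\boldsymbol\theta)$, $f^h(\boldsymbol\theta)$ as the perturbed pair. The same is done for the negated triple $(-g,-f(\boldsymbol\theta),-f^h(\boldsymbol\theta))$, noting that $\|{-u}\|_{\dot H^{-1}}=\|u\|_{\dot H^{-1}}$. Finally, we pass from $S$ to $S^2$ through $|a^2-b^2|=|a-b|\,|a+b|$, exactly as in Corollary~\ref{cor:sym_risk_stability}.

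The main obstacle is making the constant independent of $\boldsymbol\theta$. Proposition~\ref{prop:Lipschitz_stability} gives a constant depending on $L^\infty$ bounds and on two-sided density bounds for $T_\sigma f(\boldsymbol\theta)$, $T_\sigma f^h(\boldsymbol\theta)$ and $T_\sigma g$. These are obtained as follows.
\begin{itemize}
\item Continuity of $f$ and $f^h$ on the compact set $\Theta$ gives $\sup_{\boldsymbol\theta}\|f(\boldsymbol\theta)\|_{L^\infty}\le R$ and the same for $f^h$. For $f^h$ this is assumed as one of the ``uniform bounds on $f^h$'' allowed in the constant.
\item With $|u|\le R$, the densities satisfy $\sigma(-R)/(|X|\sigma(R))\le T_\sigma u\le\sigma(R)/(|X|\sigma(-R))$, because $\sigma$ is positive and increasing. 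The same holds for $-u$.
\item The factor $|a+b|$ is bounded by Theorem~\ref{well-defined}(iii), or directly by continuity of $S_{\sigma,\varepsilon}$ on this bounded class.
\end{itemize}
All constants therefore depend only on $\varepsilon$, $R$, $\|g\|_{L^\infty}$, $\sigma$ and $X$. Taking the maximum over $\boldsymbol\theta$ then gives
\[
\sup_{\boldsymbol\theta}|\Phi-\Phi^h|\le C_\varepsilon\,\eta,
\qquad
\eta:=\max_{\boldsymbol\theta\in\Theta}\|f(\boldsymbol\theta)-f^h(\boldsymbol\theta)\|_{\dot H^{-1}}.
\]
The maximum is attained if $\boldsymbol\theta\mapsto f^h(\boldsymbol\theta)$ is continuous into $L^\infty$, since $L^\infty$ embeds continuously into $\dot H^{-1}$ for mean-zero functions on bounded $X$. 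Otherwise we read it as a supremum.

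The final step is the standard Hellinger argument, as in Appendix~\ref{app:hellinger_stability}. Write $Z=\int e^{-\lambda\Phi}\,d\Pi_0$ and $Z^h$ for its analogue. Since $0\le\Phi,\Phi^h\le C$, we have $Z,Z^h\ge e^{-\lambda C}>0$. The mean value theorem gives $|Z-Z^h|\le\lambda C_\varepsilon\eta$. Next split
\[
2\,d_{\mathrm H}^2\le \frac{2}{Z}\int\bigl(e^{-\lambda\Phi/2}-e^{-\lambda\Phi^h/2}\bigr)^2d\Pi_0
+2\int e^{-\lambda\Phi^h}\,d\Pi_0\,\bigl(Z^{-1/2}-(Z^h)^{-1/2}\bigr)^2 .
\]
The first term is at most $(\lambda^2/2)e^{\lambda C}C_\varepsilon^2\eta^2$, using $|e^{-a}-e^{-b}|\le|a-b|$ for $a,b\ge0$. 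For the second term, $|Z^{-1/2}-(Z^h)^{-1/2}|\le \tfrac12\max(Z^{-3},(Z^h)^{-3})^{1/2}|Z-Z^h|$, which again is $O(\eta)$. Hence $d_{\mathrm H}(\Pi,\Pi^h)\le C\eta$, with $C$ depending only on $\varepsilon$, $\lambda$ and the uniform bounds on $f$, $f^h$ and $g$, as claimed.
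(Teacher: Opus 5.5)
Your proposal is correct relative to the paper's earlier results and is essentially the paper's own proof: the paper likewise obtains the uniform bound $|\Phi(\boldsymbol\theta)-\Phi^h(\boldsymbol\theta)|\le C_\varepsilon\|f(\boldsymbol\theta)-f^h(\boldsymbol\theta)\|_{\dot H^{-1}(X)}$ by applying Corollary~\ref{cor:sym_risk_stability} ``with the first argument perturbed'' (implicitly via Proposition~\ref{prop:Lipschitz_stability} and symmetry), takes the maximum over $\Theta$, and reruns the Hellinger argument of Appendix~\ref{app:hellinger_stability}. Your explicit use of the symmetry of $S_{\sigma,\varepsilon}$, the two-sided density bounds you derive from the $L^\infty$ bounds, and your check that every constant is uniform in $\boldsymbol\theta$ make precise what the paper leaves implicit (your $2\,d_{\mathrm H}^2$ on the left of the split uses the convention with a factor $\tfrac12$ in the Hellinger distance, which the paper omits, so only constants change).
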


\begin{proof}
Define
\[
\Phi(\boldsymbol\theta)
=
\mathcal D_{\sigma,\varepsilon}^{2}
\bigl(
f(\boldsymbol\theta),
g
\bigr),
\qquad
\Phi^h(\boldsymbol\theta)
=
\mathcal D_{\sigma,\varepsilon}^{2}
\bigl(
f^h(\boldsymbol\theta),
g
\bigr).
\]
Applying Corollary~\ref{cor:sym_risk_stability} with the first argument perturbed yields
\[
|\Phi(\boldsymbol\theta)-\Phi^h(\boldsymbol\theta)|
\le
C_\varepsilon
\|f(\boldsymbol\theta)-f^h(\boldsymbol\theta)\|_{\dot H^{-1}(X)},
\]
uniformly in $\boldsymbol\theta$. 
Taking the maximum over $\Theta$ and repeating the standard Hellinger-distance argument used in Theorem~\ref{stability_data} gives the desired estimate.
\end{proof}

These results show that the symmetrized Sinkhorn-Gibbs posterior inherits robustness properties directly in the weaker $\dot H^{-1}$ topology. In contrast to posterior constructions whose stability is naturally controlled in stronger norms such as $L^2$ (cf.~\cite{Stuart:10}), the proposed framework remains stable under low-frequency perturbations of both the observed data and the forward model. Combined with the asymptotic convexity and stability properties of the symmetrized Sinkhorn loss established in Section~\ref{sec:convexity}, these robustness guarantees provide theoretical support for the use of symmetrized Sinkhorn-Gibbs inference in oscillatory inverse problems.


\section{Numerical Validation of Symmetrized Sinkhorn-Gibbs Inference}
\label{sec:numerical_exp}

In this section, we assess the performance of the proposed symmetrized Sinkhorn-Gibbs inference framework on a simplified yet structurally challenging seismic inversion problem. We compare its behavior with Gibbs posteriors constructed from two alternative empirical risks: the Euclidean ($L^2$) risk and a trace-by-trace Wasserstein risk. The goal is twofold. First, we investigate how different loss functions shape the empirical risk landscape of the inverse problem, especially its oscillations, local minima, and basin structure near the true parameter. Second, we compare the resulting posterior distributions across multiple regimes, including varying data resolution, observational noise, and intrinsic variability in the underlying parameters.

Unlike classical inference benchmarks, which typically assume a fixed but unknown parameter with noisy data and focus on posterior concentration as the amount of data increases, we consider a benchmark with both \emph{epistemic} and \emph{aleatoric} uncertainty in single-event and multi-event settings, respectively. At the event level, the problem reduces to a standard inverse problem with a fixed parameter, allowing us to examine posterior concentration and sensitivity to resolution and noise. At the population level, however, parameters are modeled as random variables drawn from an underlying distribution, enabling us to assess how well the inferred posteriors recover this distribution from a finite number of observations. This dual structure provides a richer validation setting: it allows us to study both classical consistency behavior and finite-sample distributional recovery, and to compare inferred posteriors directly against a known ground-truth distribution.

\subsection{Benchmark and experimental framework}
\label{sec:benchmark}

We consider a simplified seismic inversion problem in which the goal is to infer the location of a source along a one-dimensional geological fault, and potentially other seismic parameters, from observed wavefield data. The feature domain is taken as a space-time domain
\[
X = [0,1] \times [0,T],
\]
where $x \in [0,1]$ denotes the spatial coordinate along the fault and $t \in [0,T]$ denotes time. The wavefield is observed on a discrete grid of $N_x$ receivers and $N_t$ time samples.

\paragraph{Forward model.}
We denote by $f(\theta)$ the forward model, where the parameter $\theta = x_s \in [0,1]$ represents the source location. The predicted wavefield is constructed as a superposition of multiple arrivals:
\[
f(x,t; x_s) = a \sum_{\ell=1}^4 A_\ell(x; x_s)\, s_\ell\!\left(t - T_\ell(x; x_s)\right).
\]
Each term corresponds to a distinct propagation path, characterized by a travel-time map $T_\ell(x; x_s)$, a spatial amplitude modulation $A_\ell(x; x_s)$, and a source-time waveform $s_\ell$. The waveforms are defined as mixtures of shifted Ricker wavelets,
\[
s_\ell(\tau) = \sum_{m=1}^3 \alpha_{\ell,m}\, \psi_{\nu_\ell}(\tau - \tau_{\ell,m}),
\]
where $\psi_{\nu}$ denotes a Ricker wavelet with dominant frequency $\nu$:
\[
\psi_{\nu}(t) = \big(1 - 2\pi^2 \nu^2 t^2\big)e^{-\pi^2 \nu^2 t^2}.
\]

The four arrivals include direct, delayed, reflected, and scattered components, with distinct travel-time geometries and spatial amplitude modulations:
\begin{enumerate}
\item A direct arrival with linear travel time
\[
T_1(x; x_s) = \frac{|x - x_s|}{c},
\qquad
A_1(x; x_s) = r_{\mathrm{dir}}.
\]
\item A delayed direct-path arrival
\[
T_2(x; x_s) = \Delta_0 + \frac{|x - x_s|}{c},
\qquad
A_2(x; x_s) = r_0.
\]
\item A reflected arrival with curved travel-time structure and spatially oscillatory branch
\[
T_3(x; x_s) = \Delta_1 + \beta_1 (x - \xi_1)^2 + \gamma_1 |x_s - \xi_1|,
\]
\[
A_3(x; x_s) = r_1 \left[1 + \rho_1 \cos\!\big(k_1 (x - x_s - \eta_1)\big)\right].
\]
\item A scattered/partially visible arrival with oscillatory and spatially localized branch
\[
T_4(x; x_s) = \Delta_2 + \beta_2 \sqrt{(x - \xi_2)^2 + h_2^2} + \gamma_2 |x_s - \xi_2|,
\]
\[
A_4(x; x_s) =
r_2 \, w(x)\left[1 + \rho_2 \cos\!\big(k_2 (x - x_s - \eta_2)\big)\right],
\]
where $w(x)$ is a smooth visibility window supported on $[0,1]$.
\end{enumerate}

\paragraph{Structure of the wavefield.} 
This construction produces wavefields with multiple overlapping arrivals, spatially varying amplitudes, and oscillatory patterns in both time and space. These features mimic key characteristics of seismic data, including wavefront interference, path-dependent waveform distortion, and partial visibility of certain arrivals. While simplified and low-dimensional, the model isolates fundamental mechanisms—misalignment, interference, and limited visibility—that are known to create challenging inverse problems. The benchmark is specifically designed to induce misalignment in both time and space, giving rise to the temporal and spatial cycle-skipping phenomena discussed in Section~\ref{sec:cycle_skipping}. As a result, standard discrepancy measures often exhibit highly nonconvex loss landscapes. The purpose of this construction is not to reproduce a particular physical system in full detail, but rather to provide a controlled and reproducible benchmark that isolates key sources of nonconvexity and enables systematic comparison of inference methodologies, including the proposed symmetrized Sinkhorn-Gibbs framework.

\paragraph{Data generation.}
We consider a setting that models intrinsic variability in the system, corresponding to the spatial distribution of earthquake source locations along a geological fault. Each seismic event represents a distinct realization of the system, with its own parameter $\theta_j \in \Theta$ drawn from an underlying probability measure $\mu_\theta$ on $\Theta$.

We consider a collection of $J$ independent events. For each event $j = 1,\dots,J$, the parameter is drawn independently as
\[
\theta_j \overset{\mathrm{iid}}{\sim} \mu_\theta,
\]
and the corresponding data field is generated through the forward model:
\[
g_j(\mathbf{x}) = f(\theta_j)(\mathbf{x}), \qquad \mathbf{x} = (x,t) \in X.
\]

In practice, each field is observed on a discrete grid $\{\mathbf{x}_i\}_{i=1}^n \subset X$, yielding observations $\{g_j(\mathbf{x}_i)\}_{i=1}^n$, which we denote compactly by $g_j$. Here, $n = N_x \times N_t$ corresponds to the number of spatial and temporal samples.

To assess robustness, we consider both noise-free and noisy observations. In the noisy case,
\[
g_j^{\mathrm{obs}} = g_j + \varepsilon_j,
\]
where $\varepsilon_j$ represents additive observational noise.

The full dataset consists of $J$ independent realizations:
\[
\mathcal{D}_J = \left\{ \{ g_j^{\mathrm{obs}}(\mathbf{x}_i) \}_{i=1}^n \right\}_{j=1}^J.
\]

To illustrate the structure of the benchmark data, Figure~\ref{fig:benchmark_topviews} shows three representative noise-free events corresponding to different source locations, namely $x_s = 0.2$, $0.5$, and $0.7$. Each panel displays a top view of the observed wavefield as a function of time $t$ and spatial location $x$. The plots highlight the main qualitative features built into the benchmark: multiple arrivals, curved travel-time branches, oscillatory spatial amplitude patterns, and partial visibility of some arrivals. As the source location varies, the relative position and interaction of these structures change in a coherent but nontrivial manner, illustrating the geometric complexity that makes the benchmark challenging for inference.
\begin{figure}[!ht]
\centering
\includegraphics[width=0.31\textwidth]{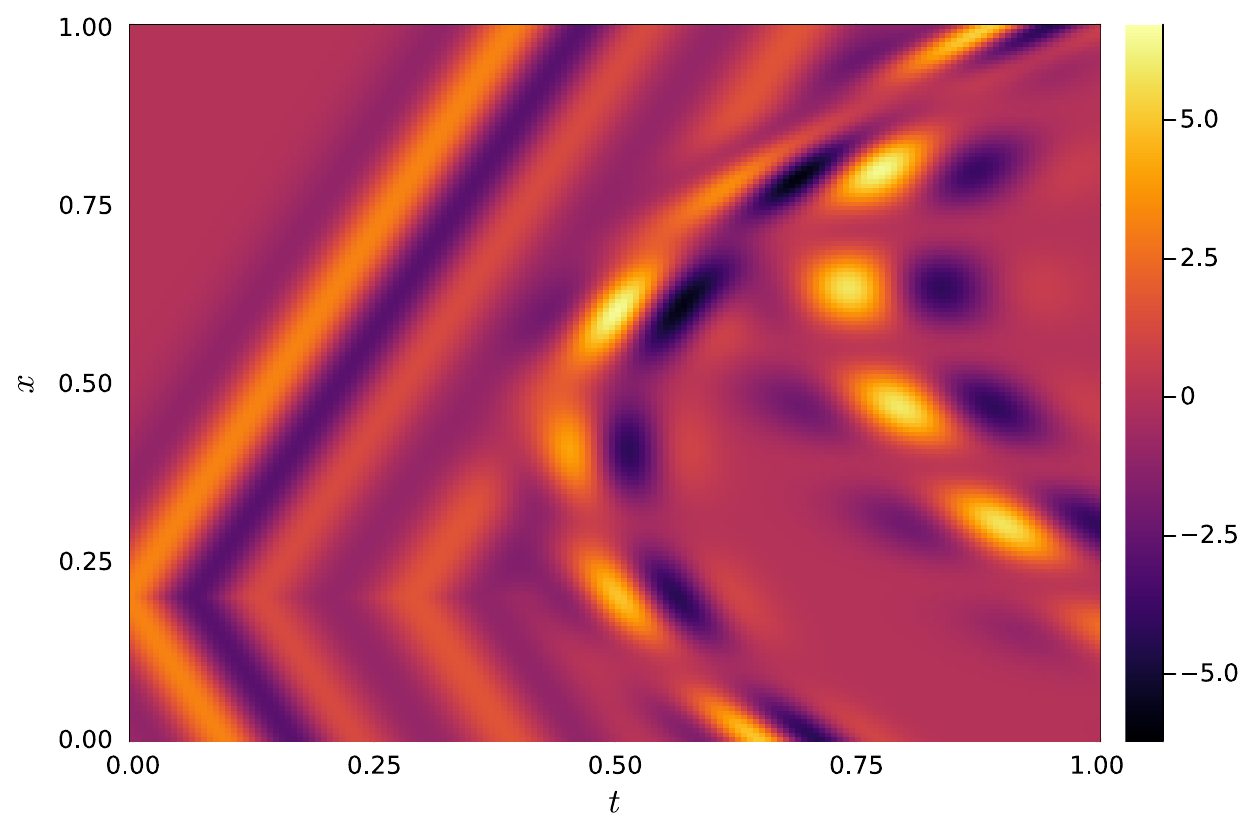}
\includegraphics[width=0.31\textwidth]{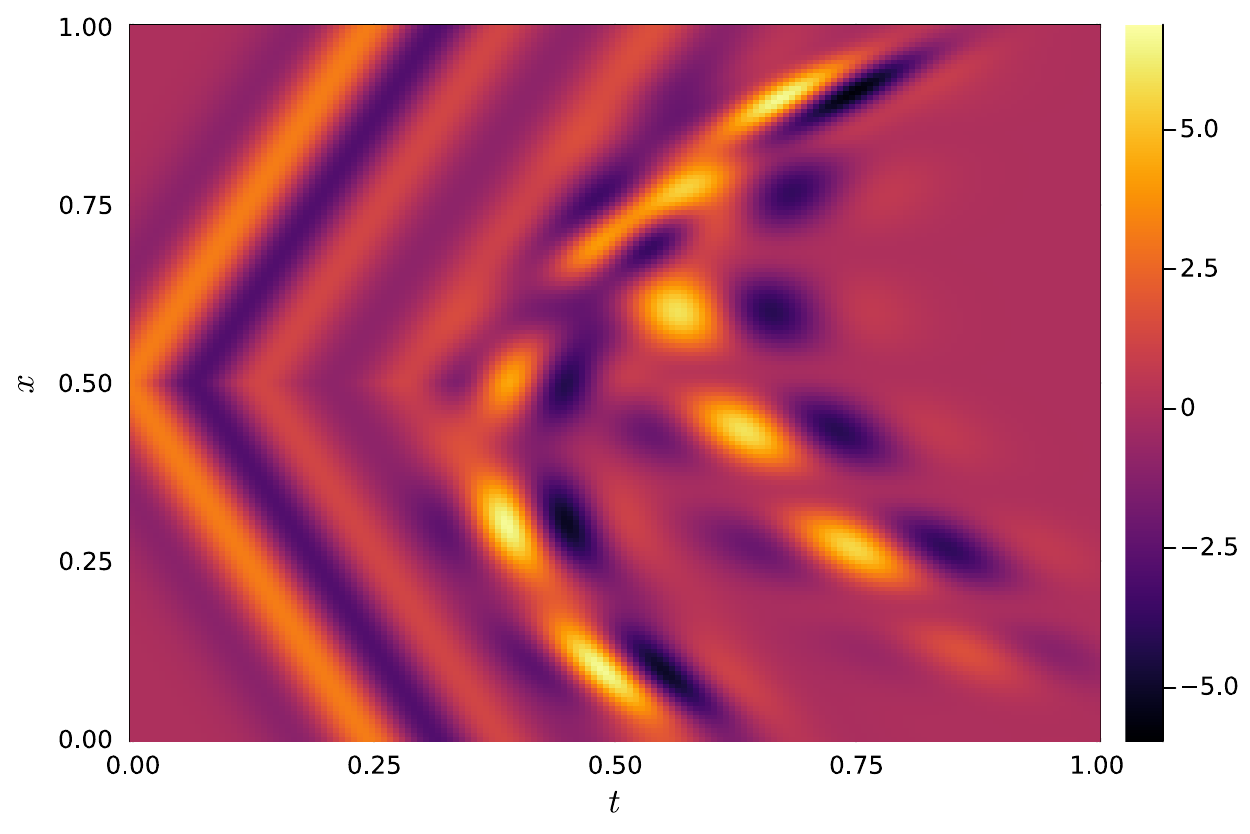}
\includegraphics[width=0.31\textwidth]{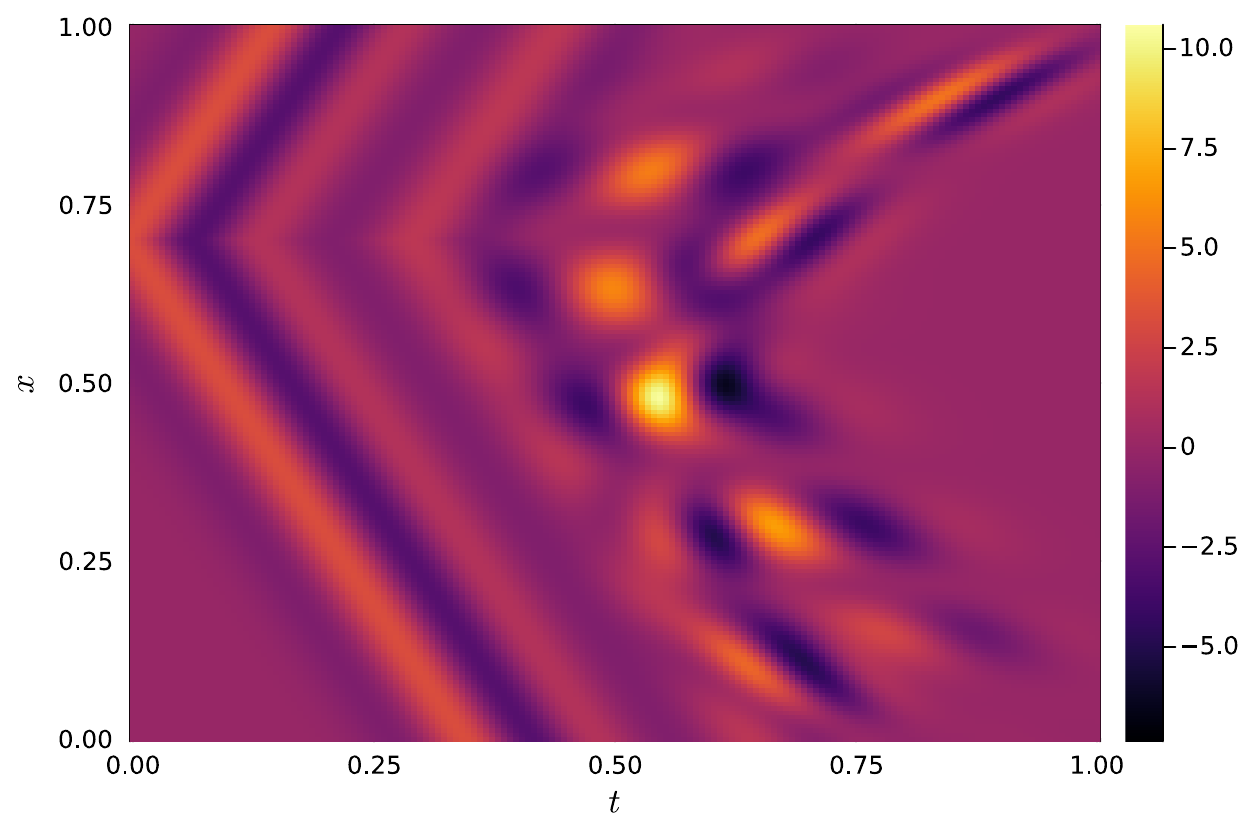}
\caption{Three representative benchmark events shown as top views of the noise-free observed wavefield over the space-time domain $(x,t)$, for source locations $x_s = 0.2$, $0.5$, and $0.7$ (left to right). The wavefields exhibit multiple arrivals, oscillatory spatial structure, and partial visibility, all of which contribute to the nonconvexity mechanisms studied later in the section.}
\label{fig:benchmark_topviews}
\end{figure}

\paragraph{Epistemic and aleatoric uncertainty.}
We consider two complementary regimes.
\begin{itemize}
    \item \emph{Epistemic uncertainty:} A fixed parameter $\theta^\ast$ (which is $\theta_j$ at event $j$) generates the data $g_j$ (or $g_j^{\mathrm{obs}}$), and the goal is to infer this parameter from a single dataset. This corresponds to the classical inverse problem setting.

    \item \emph{Aleatoric uncertainty:} The parameters $\{\theta_j\}_{j=1}^J$ are modeled as independent samples from an unknown distribution $\mu_\theta$. In this case, each dataset corresponds to a different realization of the system, and the goal is to infer properties of the underlying distribution.
\end{itemize}
This combined setting allows us to study both posterior concentration at the event level and distributional recovery at the population level.

\paragraph{Inference framework.}
For each event $j$, we infer the parameter $\theta_j$ from the observations $g_j$ using a Gibbs posterior of the form
\[
\Pi_j(d\theta \mid g_j) \propto \exp\!\bigl(-\lambda n \, \Phi(\theta; g_j)\bigr)\, \Pi_0(d\theta),
\]
where $\Phi(\theta; g_j)$ denotes the empirical risk defined in Section~\ref{sec:background}, with the loss function chosen as either the $L^2$ loss, a trace-by-trace Wasserstein loss, or the proposed Sinkhorn-based loss.

\paragraph{Two levels of evaluation.}
In this combined epistemic-aleatoric setting, we distinguish two levels of evaluation.

\smallskip
\noindent
\emph{Event-level inference.} This relates to the epistemic uncertainty at the level of individual events. For each event $j$, we analyze posterior concentration, estimation error, and sensitivity to resolution, initialization, and noise. This corresponds to the classical setting of recovering a fixed parameter. As the information content of the data increases, we expect the posterior to concentrate near the true parameter $\theta_j$:
\[
\Pi_j(\cdot \mid g_j) \;\to\; \delta_{\theta_j},
\]
where $\delta_{\theta_j}$ denotes the Dirac measure at $\theta_j$.

\smallskip
\noindent
\emph{Population-level inference.} This relates to the aleatoric uncertainty component. Across multiple events, we evaluate the recovery of the underlying distribution $\mu_\theta$, which we interpret as a probability measure on $\Theta$. From $J$ independent datasets, corresponding to parameters $\{\theta_j\}_{j=1}^J$ sampled from $\mu_\theta$, we construct:
\[
\hat{\mu}_J^{\mathrm{post}} 
= \frac{1}{J} \sum_{j=1}^J \Pi_j(d\theta \mid g_j),
\]
and compare this estimator with the true distribution $\mu_\theta$ using the quadratic Wasserstein distance.

\paragraph{Resolution and repetition.}
To study convergence behavior, we vary the data resolution by increasing $(N_x, N_t)$, thereby increasing the information content of each event. For population-level experiments, we vary the number of events $J$ and repeat experiments multiple times to obtain statistically stable estimates. We note that for each repetition, a new 
set of $J$ parameters is drawn from the target $\mu_\theta$, while all other components (forward model, sampling grid, amplitudes, and inference 
settings) are kept fixed. This setup allows us to examine both asymptotic trends (in resolution or number of events) and finite-sample behavior, providing a comprehensive evaluation of the proposed inference framework.

\paragraph{Organization of the numerical study.} 
The numerical study proceeds from increasingly demanding inference tasks. We first analyze the geometry of the underlying loss functions and the resulting posterior distributions in a one-parameter setting. We then examine robustness under observational noise and assess the impact of event-level inference on population-level recovery. In these experiments, inverse-temperature parameters are calibrated separately for each method to ensure comparable posterior concentration.  Finally, we consider a multi-parameter inverse problem in which model parameters and inverse temperature are inferred jointly using the adaptive Gibbs sampler, where the inverse temperature is learned jointly with the physical parameters.

\subsection{Mechanisms of nonconvexity: temporal and spatial cycle skipping}
\label{sec:cycle_skipping}

As discussed in Section~\ref{sec:background}, oscillatory inverse problems often exhibit highly nonconvex discrepancy landscapes arising from signal misalignment. The benchmark introduced above is designed to isolate and amplify these effects through structured temporal and spatial cycle skipping.

\paragraph{Temporal cycle skipping.}
Each waveform $s_\ell(t)$ contains oscillatory structure in time. When the parameter $\theta$ is misspecified, the predicted arrivals are shifted relative to the observed signal. Due to the oscillatory nature of the waveforms, incorrect parameter values may align peaks of the predicted signal with different oscillation cycles of the observed signal. This produces multiple local minima in the $L^2$ loss, even when the misalignment is large.

\paragraph{Spatial cycle skipping.}
In addition to temporal structure, the amplitude modulations $A_\ell(x;\theta)$ introduce oscillatory patterns across receivers. As the parameter varies, these spatial patterns shift along the domain. Because of their periodic structure, incorrect parameter values may align with different spatial phases of the observed data, again leading to multiple local minima. This effect is particularly pronounced when interference between arrivals produces complex spatial patterns.

\paragraph{Implications for loss functions.}
These two mechanisms affect loss functions differently:
\begin{itemize}
\item The $L^2$ loss is sensitive to pointwise differences and therefore suffers from both temporal and spatial cycle skipping.
\item Trace-by-trace Wasserstein distances mitigate temporal misalignment by aligning signals independently in time at each receiver, but they do not account for spatial coupling and therefore remain sensitive to spatial cycle skipping.
\item In contrast, optimal transport in the joint space-time domain allows mass to move simultaneously in both space and time. The resulting Sinkhorn-based loss can therefore accommodate both temporal and spatial misalignment, leading to an empirical risk landscape with reduced oscillations, fewer competing local minima, and a broader basin near the true parameter.
\end{itemize}

\medskip

This construction is intentional: the benchmark isolates the fundamental mechanisms that lead to nonconvexity in practice, while remaining sufficiently controlled to allow systematic comparison across loss functions. In this sense, it serves not only as a test problem for the present work, but also as a structured validation setting for transport-based inference methods more broadly.

\subsection{Single-event analysis: risk landscape}
\label{sec:loss_landscape}

We begin with a single-event setting in which the parameter $\theta = x_s$, representing the source location of an earthquake, is fixed. Before performing inference, we examine the 
empirical risk landscape as a function of the source-location parameter. 
We compare three empirical risks within the Gibbsian framework introduced in Section~\ref{sec:adaptive_mcmc}: the Euclidean ($L^2$) risk, a trace-by-trace Wasserstein risk (defined as the average over receivers of squared one-dimensional Wasserstein distances in time), and the proposed symmetrized Sinkhorn risk.

For a given dataset $g$, generated from a single realization of $x_s$, we compute the three empirical risks over a range of candidate source locations $x_s \in \Theta = [0,1]$. We refer to the resulting function $\Phi(\theta; g)$ as the \emph{risk landscape}. Figure~\ref{fig:risk_landscapes} shows these landscapes for increasing data resolutions.

\begin{figure}[!ht]
\centering
\includegraphics[width=0.31\textwidth]{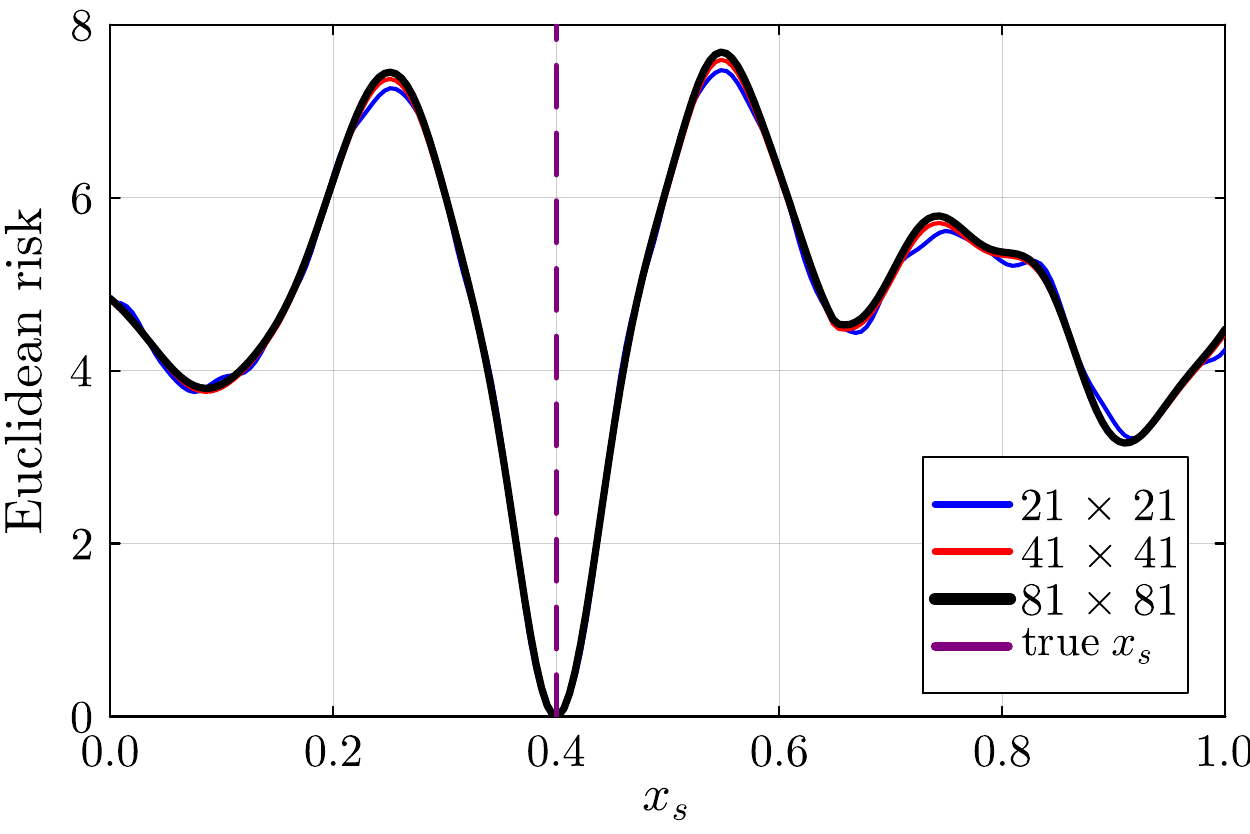}
\includegraphics[width=0.31\textwidth]{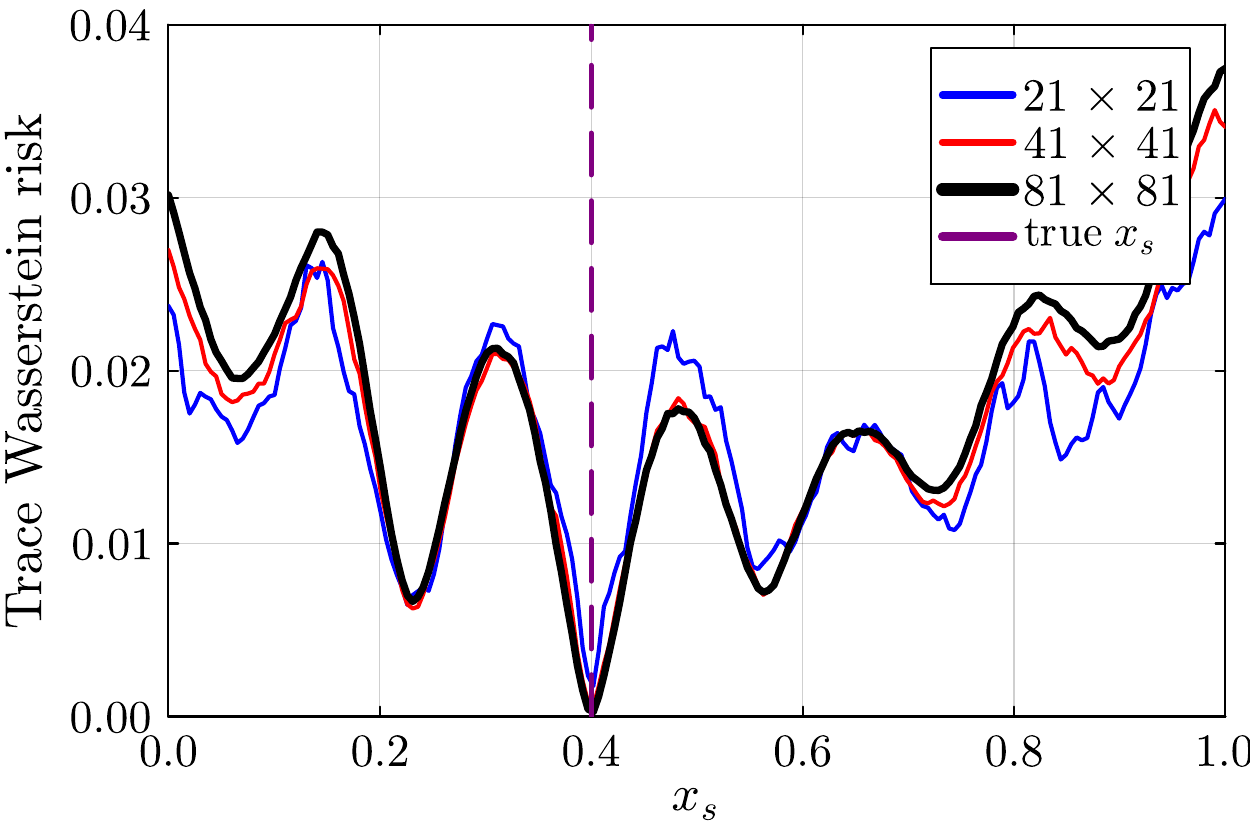}
\includegraphics[width=0.31\textwidth]{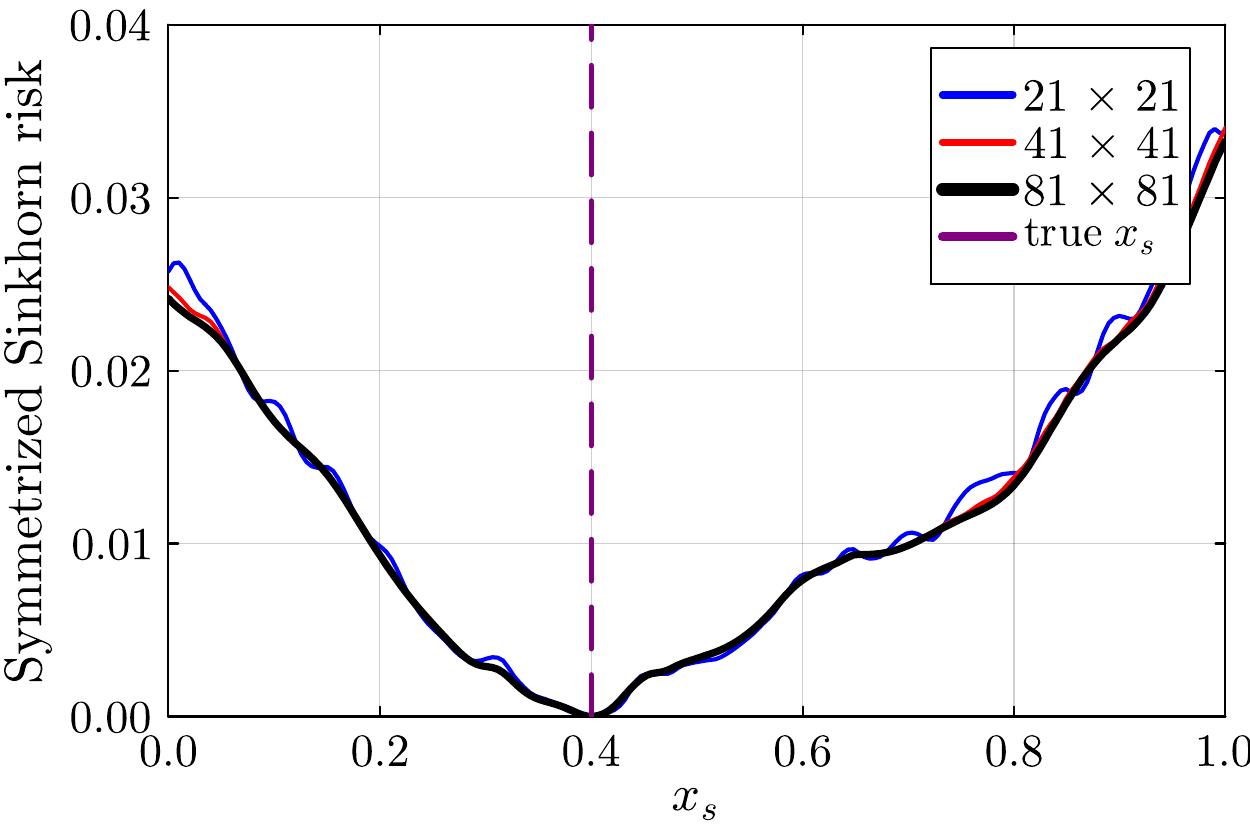}
\caption{Empirical risk landscapes as functions of the source location $x_s$ for increasing data resolutions. From left to right: Euclidean, trace Wasserstein, and symmetrized Sinkhorn risks. The curves correspond to progressively refined space-time grids, with the finest resolution shown in black. The true source location ($x_s=0.4$) is indicated by the dashed vertical line.}
\label{fig:risk_landscapes}
\end{figure}

All three risks are minimized near the true parameter in this noise-free setting. However, their landscape structure differs substantially. 
The Euclidean risk is strongly nonconvex, exhibiting pronounced oscillations and multiple secondary extrema. These arise from cycle-skipping: pointwise comparison of oscillatory signals admits multiple competing alignments, leading to a highly irregular landscape. 
The trace Wasserstein risk reduces temporal misalignment by aligning signals independently at each receiver. This attenuates large-scale oscillations compared to the Euclidean case, but the landscape remains visibly nonconvex, with persistent local minima across resolutions. The residual oscillations reflect the fact that transport is performed only in time, while spatial structure is aggregated through averaging across receivers. 
In contrast, the symmetrized Sinkhorn risk is smoother and more stable across resolutions. The curves largely align as the resolution is refined and exhibit a broad basin around the true parameter. By performing transport in the joint space-time domain, this risk reduces both temporal and spatial misalignment, resulting in a more regular landscape in the region relevant for inference.

Overall, these observations indicate that the Sinkhorn-based risk leads to a risk landscape that is less oscillatory and less dominated by competing local minima, even in this challenging oscillatory setting.

\subsection{Single-event inference: posterior behavior}
\label{sec:posterior_behavior}

We now perform inference in the single-event setting, where the parameter $\theta = x_s$ is fixed and the goal is to recover it from a single dataset $g$. 
This corresponds to the classical inverse problem setting and allows us to examine how differences in the empirical risk landscape translate into posterior concentration, accuracy, and sensitivity to spurious modes.

Throughout this subsection, we compare the Gibbs posteriors induced by the Euclidean, Trace Wasserstein, and Symmetrized Sinkhorn risks. To reduce dependence on a single representative event, all reported quantities are averaged over the set of true source locations
\[
x_s^\ast \in \{0.1,\,0.2,\,0.3,\,0.4,\,0.5,\,0.6,\,0.7,\,0.8,\,0.9\}.
\]
For all experiments, the MCMC chain is initialized at the common value $x_s^{(0)}=0.5$, corresponding to the midpoint of the uniform prior on $[0,1]$, and each experiment is repeated over 50 random seeds in order to assess sensitivity to the stochastic variability of the sampling procedure.

In this one-parameter setting, inverse temperatures are calibrated separately for each risk in order to facilitate a controlled comparison of posterior geometry. The adaptive temperature-learning strategy introduced in Section~\ref{sec:adaptive_mcmc} is reserved for the multi-parameter experiments considered later in the section. The inverse temperatures are chosen so that the resulting posteriors have comparable variance while maintaining small posterior mean errors. This allows the comparison to focus on differences in the geometry of the underlying risks rather than on differences induced by temperature tuning. The parameter $\delta$ used in the transport-based risks is also fixed after calibration. Values that are too small or too large were observed to degrade performance in different ways, so an intermediate value is used throughout.

\subsubsection{Effect of data resolution (noise-free case)}

We first study how the posterior changes as the data resolution increases in the noise-free setting. For each resolution level, we compute two summary quantities: the average absolute error of the posterior mean, $|\hat{x}_s-x_s^\ast|$, and the average posterior variance. The first measures inference accuracy, while the second measures posterior concentration.

We consider three levels of grid refinement corresponding to the resolutions $21\times21$ (coarse), $41\times41$ (medium), and $81\times81$ (fine). For each resolution, the quantities are first averaged over multiple choices of the true source location $x_s^\ast$. We then summarize the seed-to-seed variability using the median across 50 random seeds, with shaded bands corresponding to the interquartile range, i.e., the 25\%--75\% quantiles. Figure~\ref{fig:posterior_resolution} displays these results as functions of the data resolution.

\begin{figure}[!ht]
\centering
\includegraphics[width=0.48\textwidth]{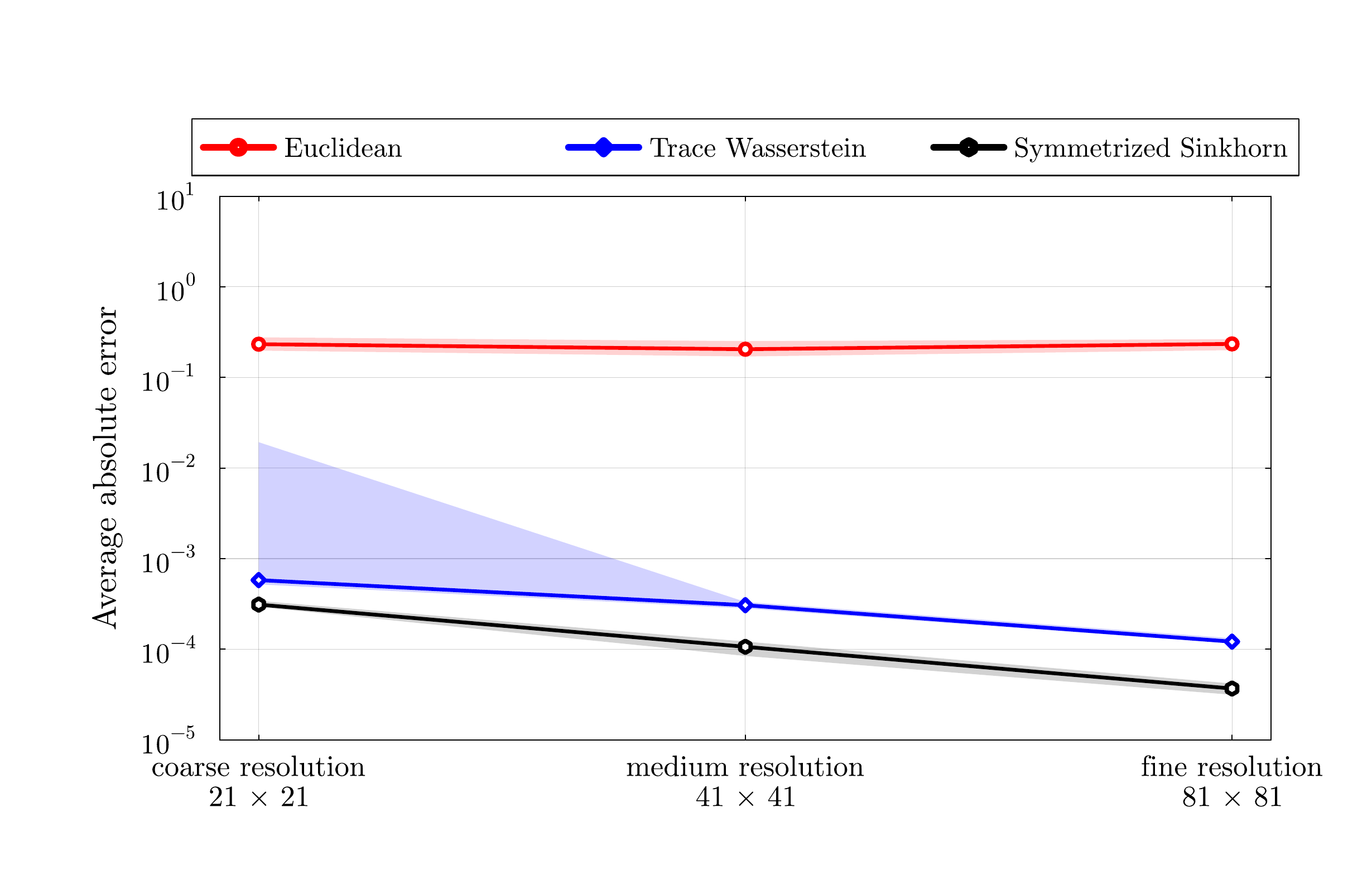}
\includegraphics[width=0.48\textwidth]{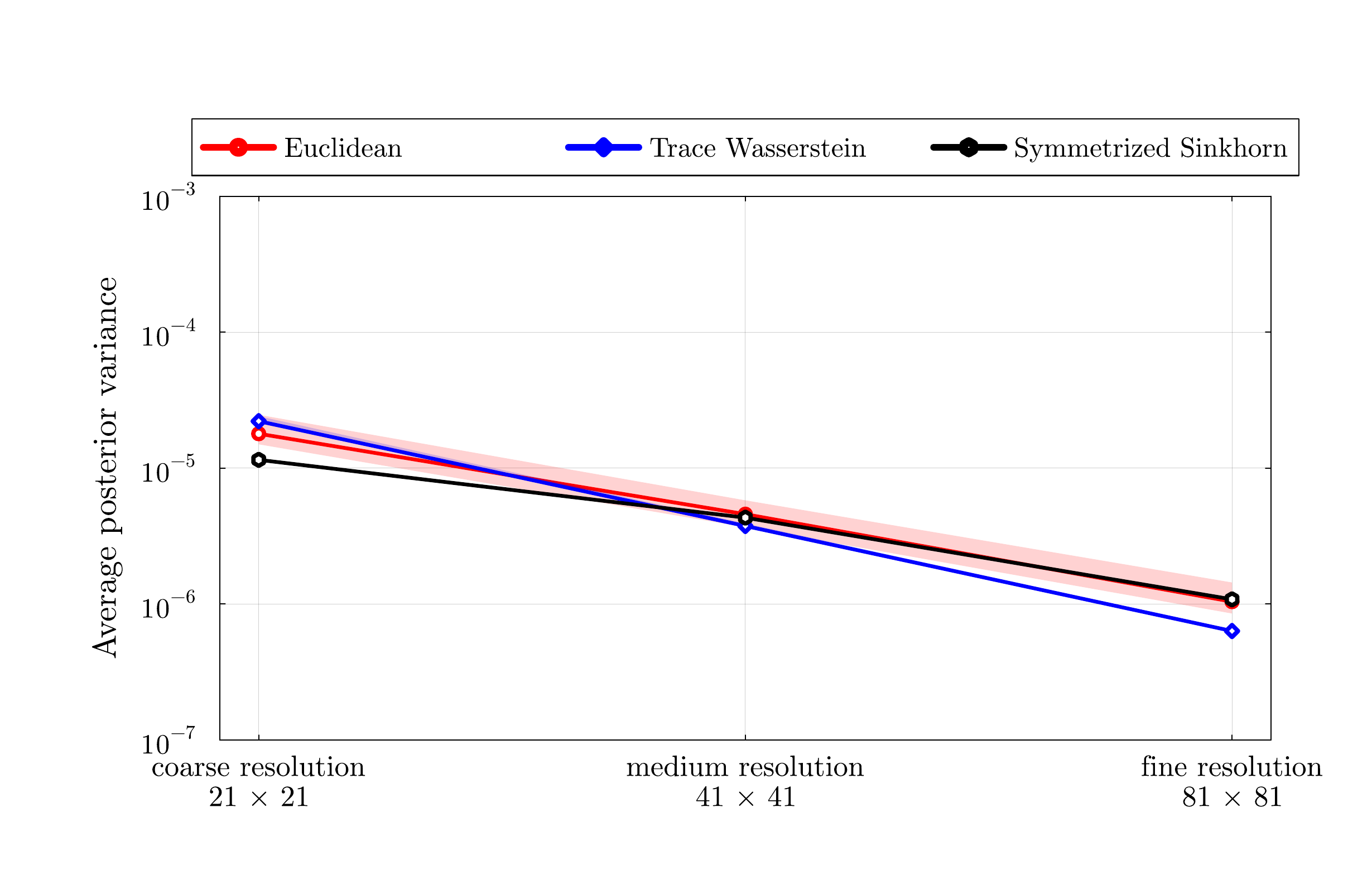}
\caption{Posterior behavior as a function of data resolution in the noise-free single-event setting. Left: average absolute error of the posterior mean. Right: average posterior variance. In each panel, values are first averaged over multiple choices of the true source location $x_s^\ast$ and then summarized over 50 random seeds. Solid curves show the median across seeds, while shaded regions indicate the interquartile range (25\%--75\% quantiles).}
\label{fig:posterior_resolution}
\end{figure}

As the resolution increases, all three methods exhibit decreasing posterior variance, indicating that finer data provide more information. However, the corresponding behavior of the posterior mean error differs substantially across the three posteriors.

The Euclidean posterior remains inaccurate across all resolutions. Its posterior mean error stays large as the grid is refined, despite the fact that its posterior variance decreases at a rate comparable to the transport-based methods. This indicates that the Euclidean posterior becomes increasingly concentrated around incorrect modes induced by the highly nonconvex Euclidean risk landscape. In this sense, the Euclidean posterior is not only inaccurate but also overconfident.

The Trace Wasserstein posterior improves substantially over the Euclidean case, achieving errors several orders of magnitude smaller. However, its variability across seeds remains noticeably larger, particularly at coarse resolution, where the interquartile band is significantly wider. This behavior is consistent with the residual nonconvexity of the trace-wise Wasserstein risk landscape: temporal misalignment is reduced, but spatially induced cycle-skipping continues to produce competing alignments across receivers.

In contrast, the Symmetrized Sinkhorn posterior exhibits the most stable and coherent behavior. Its posterior mean error decreases systematically with resolution and remains uniformly smallest across all grids, while the corresponding interquartile bands remain narrow. At the same time, its posterior variance is comparable to that of the competing methods. Thus, its improved accuracy is not simply a consequence of a more diffuse posterior, but rather of a less oscillatory and less multimodal structure of the underlying risk landscape. The joint space-time Sinkhorn comparison mitigates both temporal and spatial misalignment, leading to a posterior that is simultaneously accurate, stable, and well calibrated.

These results demonstrate that improvements in risk geometry translate directly into improvements in posterior behavior. In the noise-free single-event regime, the Symmetrized Sinkhorn posterior achieves the most reliable combination of accuracy, stability across seeds, and uncertainty quantification.

\subsubsection{Robustness with respect to noise}

We next study the robustness of the three posteriors with respect to observational noise. In this experiment, we fix the coarse observation grid ($21\times 21$) and vary the relative noise level over a prescribed range. We consider relative noise levels
$$
\eta \in \{1\%,2\%,4\%,6\%,8\%,10\%\},
$$
where the percentage is defined relative to the maximum amplitude of the clean signal. For a given noise level $\eta$, the noisy dataset is generated according to
$$
g^\eta = g + \sigma \, \xi,
\qquad
\sigma = \eta \, \max |g|,
$$
where $\xi$ is a standard Gaussian random vector of the same dimension as the data.

For each true source location, each noise level, and each random seed, we compare the posterior obtained from noisy data with the posterior obtained from clean data. As a measure of posterior stability, we compute the Hellinger distance between the clean-data posterior and the noisy-data posterior. 
The use of the Hellinger distance is consistent with the theoretical robustness results of Section~\ref{sec:robustness}, allowing empirical and theoretical notions of posterior stability to be compared directly. We also compute the absolute error of the posterior mean under noisy data. The quantities are first averaged over the set of true source locations. We then summarize the resulting seed-to-seed variability using the median across 50 random seeds, with the shaded bands corresponding to the interquartile range, i.e., the 25\%--75\% quantiles. Figure~\ref{fig:posterior_noise_sweep} summarizes these results as functions of the noise level. 
\begin{figure}[!ht]
\centering
\includegraphics[width=0.48\textwidth]{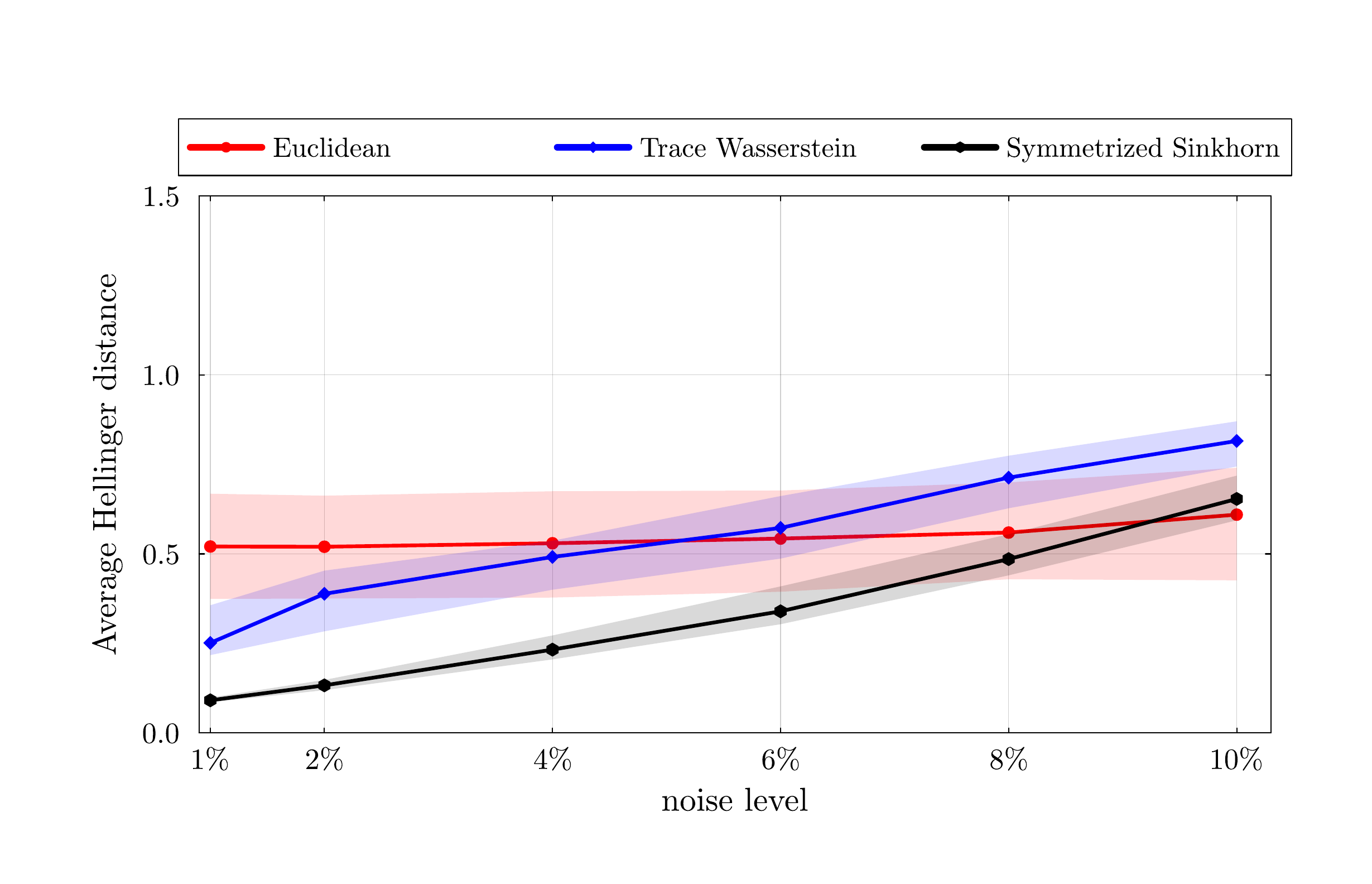}
\includegraphics[width=0.48\textwidth]{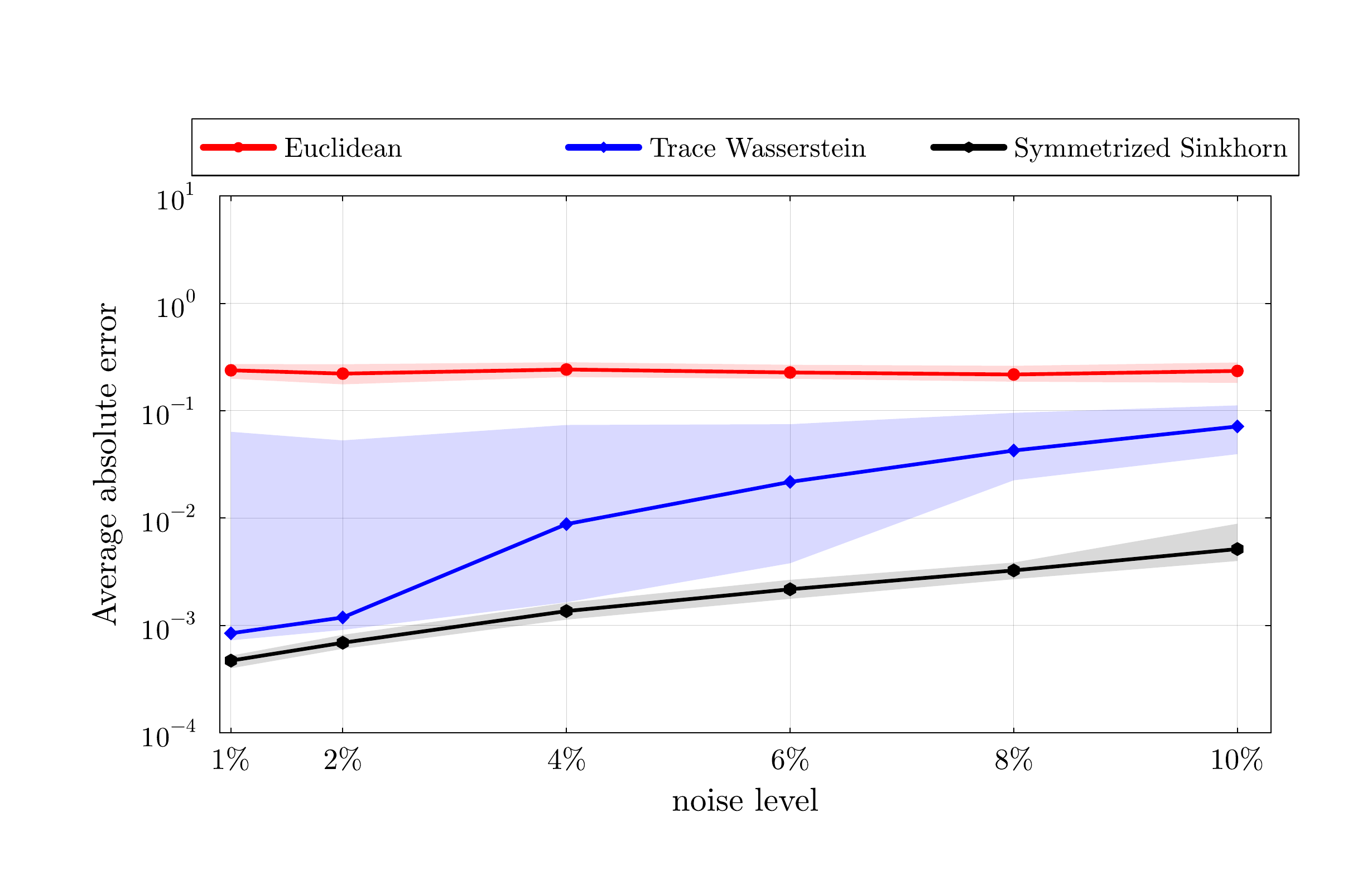}
\caption{Noise robustness in the single-event setting on the coarse observation grid ($21\times 21$). Left: average Hellinger distance between the clean-data and noisy-data posteriors. Right: average absolute error of the posterior mean under noisy data. In each panel, values are first averaged over multiple choices of the true source location $x_s^\ast$ and then summarized over 50 random seeds. Solid curves show the median across seeds, while shaded regions indicate the interquartile range (25\%--75\% quantiles).}
\label{fig:posterior_noise_sweep}
\end{figure}

Figure~\ref{fig:posterior_noise_sweep} shows two complementary aspects of robustness under observational noise. The Hellinger distance quantifies the change in the posterior induced by the addition of observational noise, whereas the posterior mean error measures inference accuracy relative to the true source location. Because these metrics characterize different aspects of posterior behavior, they should be interpreted jointly. A small Hellinger distance therefore indicates that the posterior is relatively insensitive to the added noise, but does not by itself imply accurate inference.

The Symmetrized Sinkhorn posterior exhibits the most favorable behavior overall. Its Hellinger distance remains consistently smaller than that of the competing methods, while the corresponding interquartile band remains narrow across the noise sweep, indicating stable behavior across stochastic realizations of the data. At the same time, it achieves the smallest posterior mean error throughout the experiment, again with limited variability across seeds. Thus, the reduced sensitivity to spatial and temporal misalignment provided by joint space-time transport yields a posterior that remains both stable and accurately localized in the presence of noise.

The Trace Wasserstein posterior occupies an intermediate regime. Its posterior mean error is substantially smaller than that of the Euclidean posterior, indicating that trace-wise transport successfully mitigates part of the cycle-skipping behavior present in the Euclidean risk landscape. However, both the Hellinger distance and the posterior mean error display noticeably larger variability across seeds, especially as the noise level increases. This behavior is consistent with the remaining nonconvexity of the trace-wise Wasserstein risk landscape: temporal misalignment is reduced, but competing spatial alignments can still induce instability in the posterior structure. In contrast, the Euclidean posterior exhibits comparatively moderate Hellinger distances because the addition of observational noise induces relatively little further change to an already inaccurate posterior. This is reflected in the consistently large posterior mean error across the entire noise sweep. Combined with the noise-free resolution study, these results indicate that the Euclidean posterior remains stably concentrated around incorrect modes induced by the highly nonconvex Euclidean risk landscape.

\subsection{Multiple-event inference: population-level recovery}
\label{sec:population_inference}

We now consider the multi-event setting, in which each dataset $g_j$ is generated by a parameter $\theta_j$ drawn from an underlying distribution $\mu_\theta$. The objective is no longer to recover a single parameter, but rather to infer properties of the distribution $\mu_\theta$ from a finite collection of observations. 
This setting introduces a fundamentally different validation regime. Unlike the single-event case, where performance is assessed through posterior concentration around a fixed parameter, here we can directly compare inferred distributions with the true underlying law. This enables a quantitative assessment of inference quality at finite sample sizes.

We consider a population distribution $\mu_\theta$, for the parameter $\theta$, given by a truncated Gaussian law on $[0.2,0.8]$, centered at $\theta_c = 0.35$ with standard deviation $\tau = 0.1$. For each experiment, we generate
\[
\theta_j \overset{\mathrm{iid}}{\sim} \mu_\theta,
\qquad
j=1,\dots,J,
\]
together with the corresponding datasets $\{ g_j \}_{j=1}^J$ on the coarse observation grid ($21\times21$). A relative noise level of $5\%$ is added to each dataset. For every event, we compute the posterior $\Pi_j(\theta \mid g_j)$ using each of the three loss functions.

From these posteriors, we construct an estimator of the population distribution (i.e., the posterior-mixture measure):
\[
\mu_{\theta}^J =
\frac{1}{J}\sum_{j=1}^J \Pi_j(\theta \mid g_j),
\]
and compare it against the true distribution $\mu_\theta$. As a reference, we also compare the true distribution with the oracle empirical distribution
\[
\mu_{\theta}^{J,\mathrm{oracle}}=
\frac{1}{J}\sum_{j=1}^J \delta_{\theta_j},
\]
which provides a natural finite-sample reference based on the true event parameters. All distributional errors are measured using the quadratic Wasserstein distance.

Figure~\ref{fig:population_inference} shows the Wasserstein error as a function of the number of events $J$, summarized over 50 independent repetitions. Solid curves indicate the median across repetitions, shaded regions correspond to the interquartile range (25\%--75\% quantiles), and the thick oracle curve represents the intrinsic finite-sample approximation error.

\begin{figure}[!ht]
\centering
\includegraphics[width=0.68\textwidth]{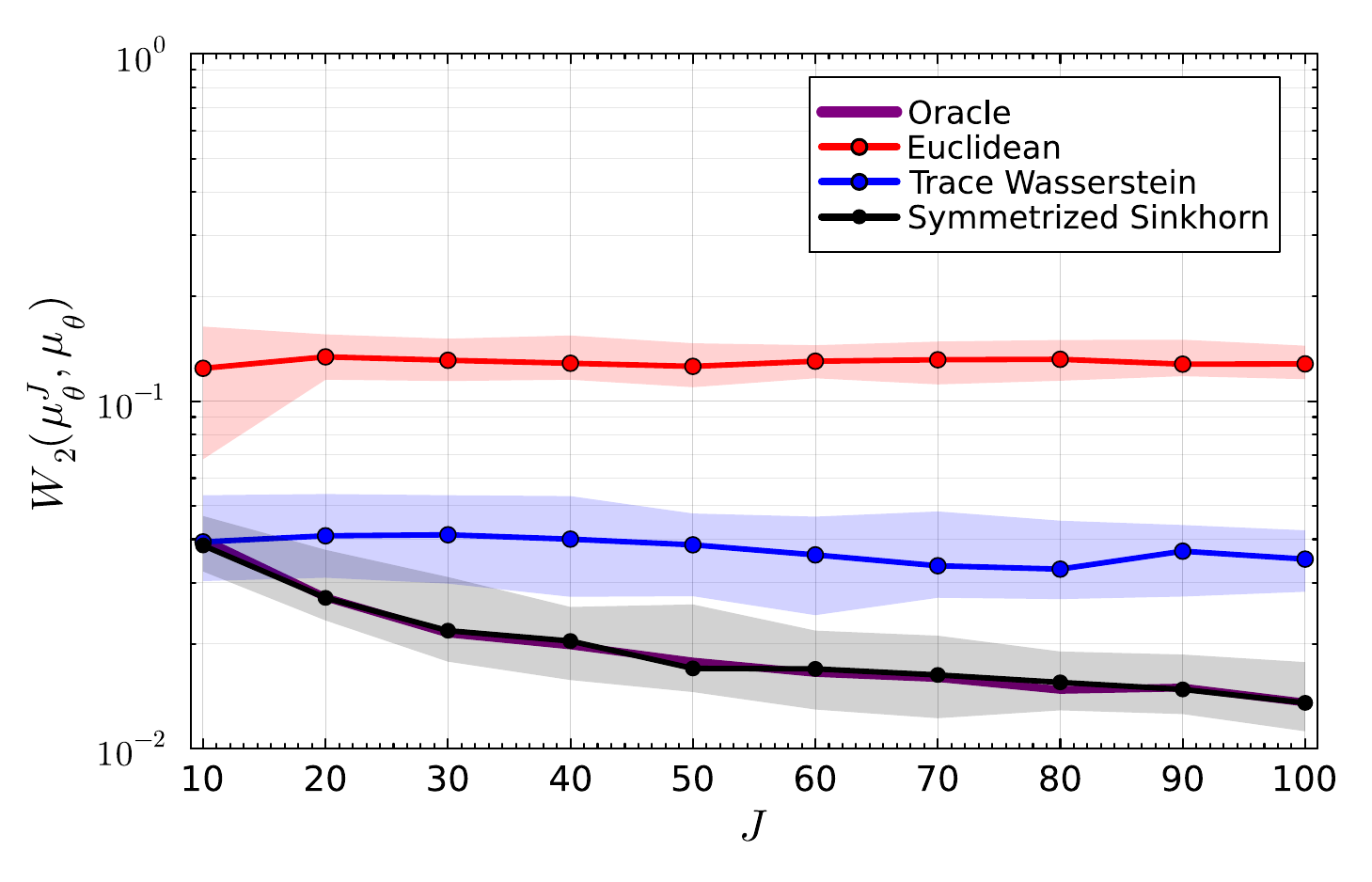}
\caption{Population-level recovery of the posterior-mixture population estimator as a function of the number of observed events $J$. The vertical axis shows the quadratic Wasserstein distance to the true population distribution $\mu_\theta$. Thin solid curves show the median across 50 independent repetitions, shaded regions indicate the interquartile range (25\%--75\% quantiles), and the thick oracle curve represents the empirical distribution built from the true event parameters. The oracle curve is nearly indistinguishable from the median Symmetrized Sinkhorn curve because the two almost completely overlap.}
\label{fig:population_inference}
\end{figure}

As expected, the oracle empirical distribution improves as the number of events increases, reflecting the intrinsic sampling variability associated with finite observations. The oracle curve is not a strict lower bound for all possible estimators, but it provides a useful reference scale based on the unobserved true event parameters.

The Euclidean posterior exhibits the weakest population-level performance. Its Wasserstein error remains large and nearly flat as $J$ increases, indicating that adding more events does not remove the bias introduced by inaccurate event-level posteriors.

The Trace Wasserstein posterior substantially improves population recovery relative to the Euclidean posterior. Its error is smaller across the full range of $J$, but it remains separated from the oracle reference and displays a wider interquartile band. This suggests that trace-wise transport reduces part of the event-level misalignment, while residual ambiguity still affects the recovered population law.

In contrast, the Symmetrized Sinkhorn posterior yields the most accurate and stable population-level inference. Its posterior-mixture error closely follows the oracle reference curve across the full range of $J$, with comparatively narrow interquartile bands. This indicates that the improved event-level risk landscapes induced by joint transport in space and time not only enhances single-event inference, but also enables reliable recovery of population-level structure from finite noisy data.

\subsection{Multi-parameter extension}
\label{sec:multi_parameter}

We finally consider a multi-parameter inverse problem that illustrates the adaptive Gibbs sampling framework introduced in Section~\ref{sec:adaptive_mcmc}. In contrast to the previous experiments, the inverse temperature is treated as an unknown parameter and inferred jointly with the physical parameters. 
In addition to the source location, we allow the dominant waveform frequency and secondary arrival structure to vary. The unknown parameter vector is
\[
\boldsymbol{\theta} = (x_s,\nu,\Delta_2),
\]
where \(x_s\) denotes the source location, \(\nu\) controls the dominant frequency of the emitted waveform, and \(\Delta_2\) introduces variability in the timing of secondary arrivals. This setting leads to a higher-dimensional inverse problem in which travel-time effects, oscillatory waveform structure, and secondary arrivals interact nonlinearly.

To avoid repeating the one-dimensional studies, we focus here on a representative noisy and moderately resolved setting. In all experiments in this section, the observed data are corrupted with \(5\%\) noise and recorded on a \(41\times41\) grid. The purpose of these experiments is not to re-examine the effects of noise or discretization resolution, but to investigate whether the favorable posterior behavior observed for the Sinkhorn loss persists in the presence of parameter coupling.

As discussed in Section~\ref{sec:adaptive_mcmc}, the Gibbsian framework accommodates a broad class of empirical risks. Here we compare posteriors associated with the Euclidean, trace-wise Wasserstein, and symmetrized Sinkhorn losses. For each loss, we consider the Gibbsian posterior density
\[
\pi(\boldsymbol{\theta},\lambda \mid y)
\propto
\exp \left[-\lambda n \Phi(\boldsymbol{\theta};y)\right]
\pi_0(\boldsymbol{\theta})\pi_0(\lambda),
\]
where \(\lambda>0\) is an inverse-temperature parameter. Rather than fixing \(\lambda\) manually, we infer it jointly with the physical parameters. The sampler alternates between a multivariate Metropolis update for \(\boldsymbol{\theta}\) with \(\lambda\) fixed, and a one-dimensional Metropolis update for \(\lambda\) with \(\boldsymbol{\theta}\) fixed. The proposal covariance for \(\boldsymbol{\theta}\) is adapted from the empirical covariance of the chain, while the proposal variance for \(\log\lambda\) is adapted separately as a scalar quantity. This adaptive construction reduces sensitivity to manual temperature tuning and is particularly important in the multi-parameter setting, where different parameter directions may exhibit substantially different effective curvatures.

The Euclidean posterior exhibited severe sensitivity to parameter coupling and consistently collapsed toward incorrect highly concentrated modes. Since this behavior was qualitatively different from the transport-based posteriors and visually dominated the scales of the marginal plots, we omit the detailed \(L^2\) posterior figures and focus instead on the comparison between the trace-wise Wasserstein and Sinkhorn Gibbs posteriors.

Figures~\ref{fig:multiparameter_xs8}--\ref{fig:multiparameter_xs4} display marginal posterior densities for two representative parameter configurations. In the first case, shown in Figure~\ref{fig:multiparameter_xs8}, the trace-wise Wasserstein posterior successfully localizes the source position \(x_s\), but remains substantially broader in the waveform-frequency and secondary-arrival parameters. In contrast, the Sinkhorn posterior remains sharply concentrated around the true parameters across all marginals. The second case, shown in Figure~\ref{fig:multiparameter_xs4}, is more challenging. Here the trace-wise Wasserstein posterior becomes visibly biased away from the true parameter vector, particularly in \(x_s\) and \(\Delta_2\), while also exhibiting substantially larger posterior uncertainty. In contrast, the Sinkhorn posterior remains accurately centered and tightly concentrated near the true parameter values across all three marginals.

These results suggest that the advantages of the Sinkhorn-based risk are not limited to the scalar source-location problem. Even when multiple waveform and arrival-time parameters are inferred simultaneously, the Sinkhorn-based Gibbs posterior remains accurately centered around the true parameter values while exhibiting less bias and sharper posterior concentration under noisy and moderately resolved observations.
\begin{figure}[t]
\centering
\includegraphics[width=0.32\textwidth]{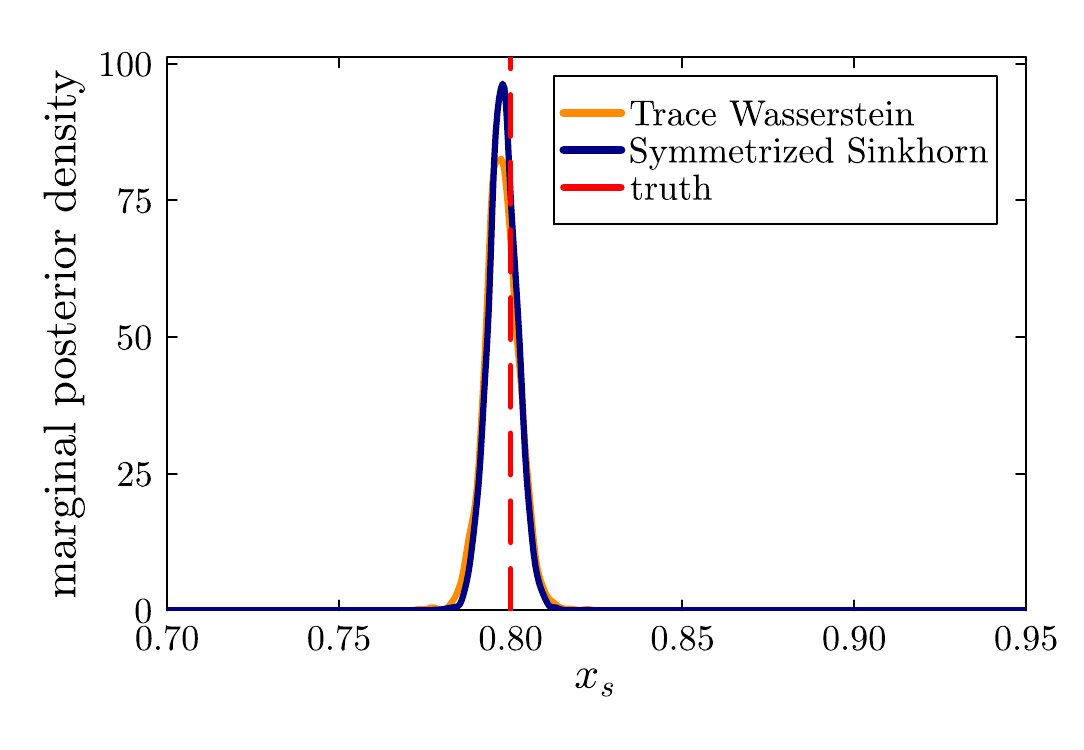}
\hfill
\includegraphics[width=0.32\textwidth]{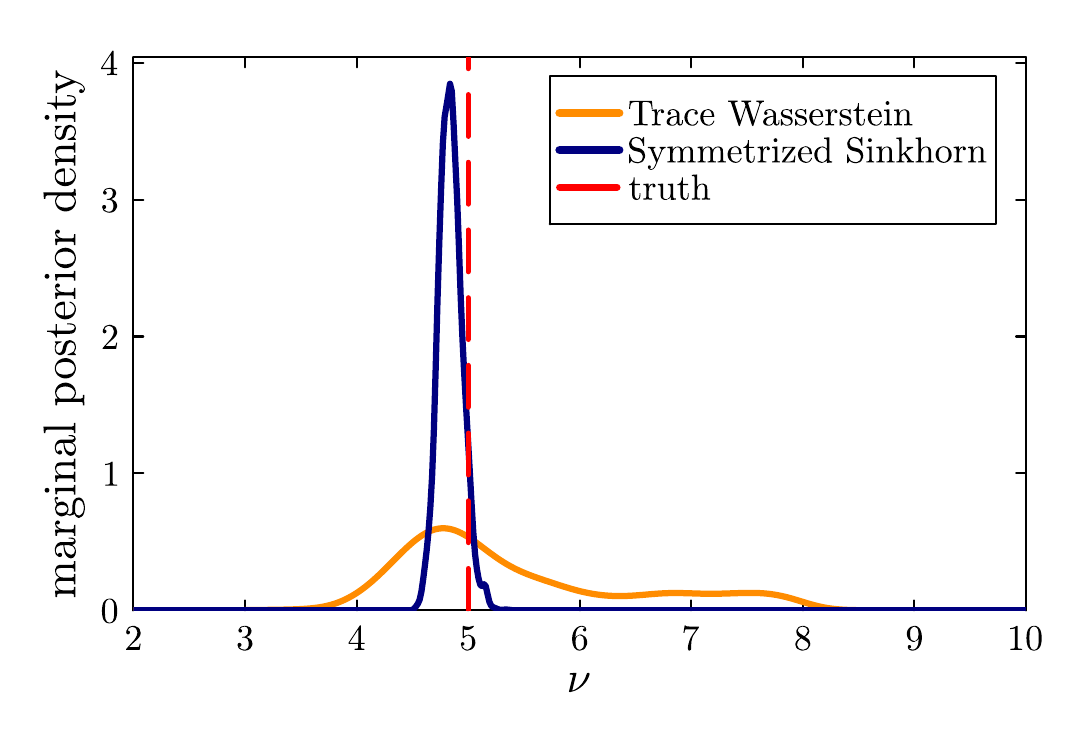}
\hfill
\includegraphics[width=0.32\textwidth]{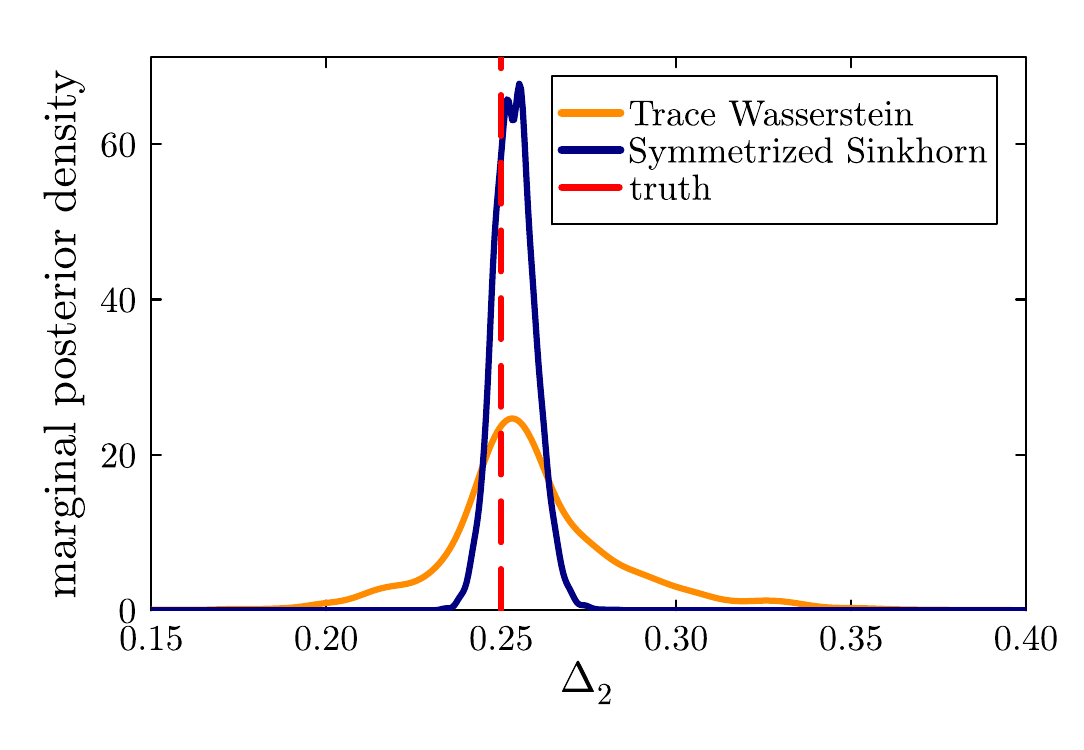}
\caption{Marginal posterior densities for the multi-parameter event-level problem with true parameter vector $\boldsymbol{\theta}_{\rm true}=(0.8,5.0,0.25)$ on a $41\times41$ grid with $5\%$ noise. The trace-wise Wasserstein posterior accurately localizes the source position but remains broader and less accurate in the waveform-frequency and secondary-arrival parameters. The Sinkhorn posterior remains substantially more concentrated around the true parameter values across all marginals.}
\label{fig:multiparameter_xs8}
\end{figure}

\begin{figure}[t]
\centering
\includegraphics[width=0.32\textwidth]{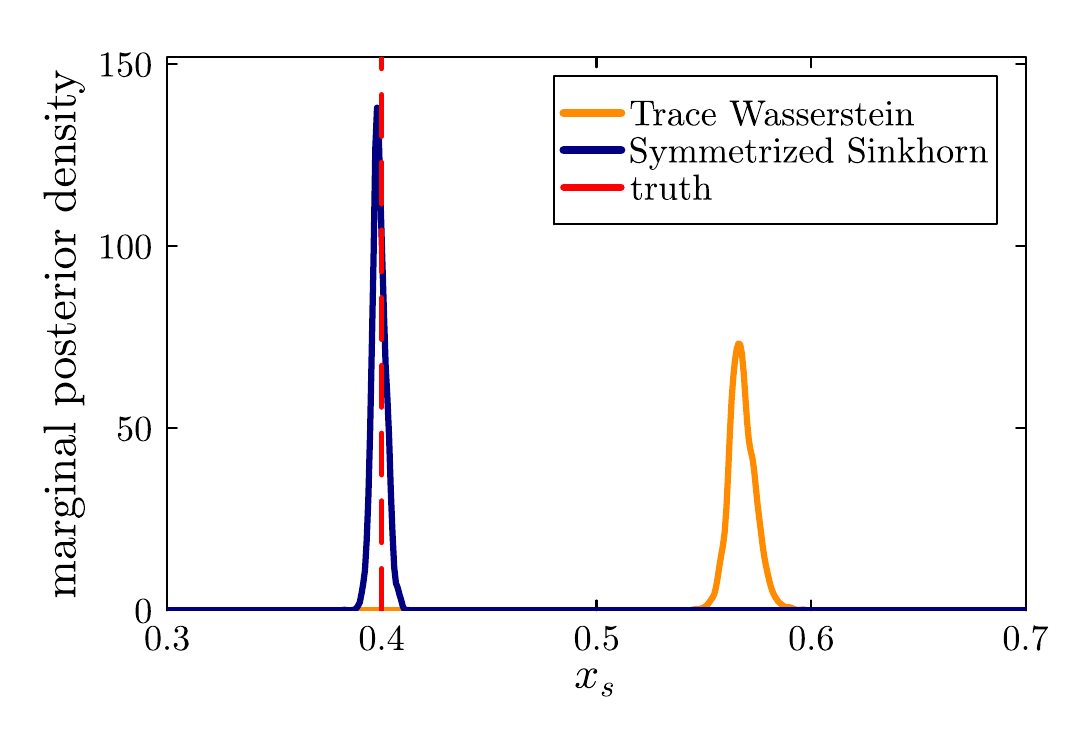}
\hfill
\includegraphics[width=0.32\textwidth]{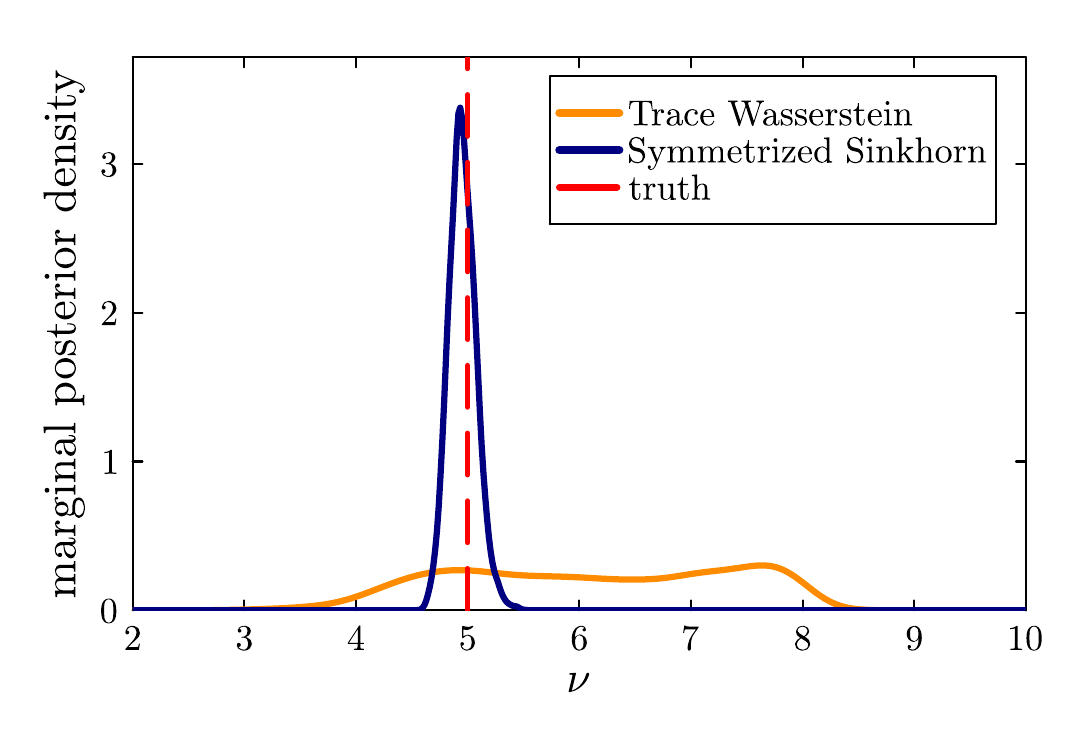}
\hfill
\includegraphics[width=0.32\textwidth]{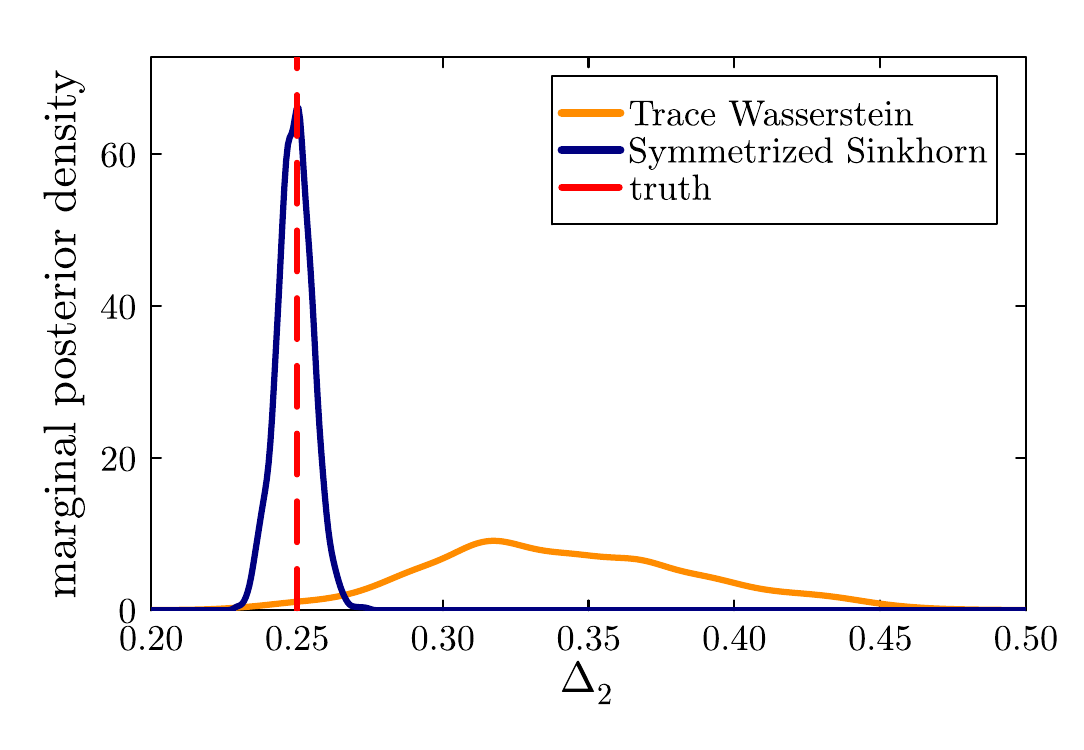}
\caption{Marginal posterior densities for the multi-parameter event-level problem with true parameter vector $\boldsymbol{\theta}_{\rm true}=(0.4,5.0,0.25)$ on a $41\times41$ grid with $5\%$ noise. In this more challenging configuration, the trace-wise Wasserstein posterior becomes biased away from the true parameter vector, particularly in $x_s$ and $\Delta_2$, while the Sinkhorn posterior remains accurately centered and sharply concentrated near the truth across all three parameters.}
\label{fig:multiparameter_xs4}
\end{figure}

\section{Conclusions}
\label{sec:conclusions}

We have introduced a symmetrized Sinkhorn-Gibbs inference framework for oscillatory inverse problems. The proposed approach integrates Gibbsian inference with entropic optimal transport through a normalization procedure for signed oscillatory signals and a symmetrized Sinkhorn loss that incorporates transport information from both positive and negative signal excursions. 
This construction yields a Gibbs posterior specifically designed to address optimization challenges arising from oscillatory data, including nonconvexity, spurious local minima, and cycle-skipping effects that commonly degrade conventional discrepancy measures.

On the theoretical side, we established smoothness properties of the normalized Sinkhorn divergence, proved well-definedness of the resulting Gibbs posterior, and derived robustness guarantees with respect to perturbations in both the observed data and the forward model. On the computational side, numerical experiments demonstrated that the proposed symmetrized Sinkhorn-Gibbs framework produces less oscillatory empirical risk landscapes with fewer spurious local minima, leading to posterior distributions that are more accurately centered around the true parameters, while also providing greater robustness to observational noise and improved population-level recovery than Gibbs inference based on Euclidean and trace-wise Wasserstein losses.

An important direction for future research is the extension of the proposed framework to high-dimensional parameter spaces. Developing scalable variational inference methods for symmetrized Sinkhorn-Gibbs posteriors represents a promising avenue for enabling transport-based inference to scale to large uncertainty quantification problems while preserving its ability to account for spatial and temporal displacement in the feature domain.

\section*{Acknowledgements}
M. Motamed was supported by the U.S. Department of Energy (DOE), Office of Science, Advanced Scientific Computing Research, under Award No. DE-SC0025481, and by the National Science Foundation under Grant No. DMS-2436318. 

This paper describes objective technical results and analysis. Any subjective views or opinions that might be expressed in the paper do not necessarily represent the views of the U.S. Department of Energy or the United States Government.

This article has been authored by an employee of National Technology \& Engineering Solutions of Sandia, LLC under Contract No. DE-NA0003525 with the DOE. The employee owns all right, title and interest in and to the article and is solely responsible for its contents. The United States Government retains and the publisher, by accepting the article for publication, acknowledges that the United States Government retains a non-exclusive, paid-up, irrevocable, world-wide license to publish or reproduce the published form of this article or allow others to do so, for United States Government purposes. The DOE will provide public access to these results of federally sponsored research in accordance with the DOE Public Access Plan https://www.energy.gov/downloads/doe-public-access-plan

\section*{Data Availability Statement}
The data and code supporting the findings of this study are available from the corresponding author upon reasonable request and will be made publicly available upon acceptance of the manuscript.

\section*{Author contributions}

M. Motamed and G. Huerta jointly conceived the project and contributed to the development of the underlying ideas. The methodology, theoretical analysis, computational investigations, and initial manuscript preparation were carried out by M. Motamed. G. Huerta contributed to methodological discussions, interpretation of the results, and manuscript review and editing. Both authors contributed to the revision of the manuscript and approved the final version.

\section*{Conflict of Interest}

The authors declare no conflicts of interest.

\section*{Ethics Statement}

Ethical approval was not required for this study.

\appendix

\section{Hellinger Stability Estimate for Symmetrized Sinkhorn-Gibbs Posteriors}
\label{app:hellinger_stability}

In this appendix, we provide the complete proof of Theorem~\ref{stability_data}, which establishes robustness of the symmetrized Sinkhorn-Gibbs posterior with respect to perturbations in the observed data. The proof follows the general strategy used in the stability analysis of Bayesian inverse problems \cite{Stuart:10,Sullivan:2017}, adapted to the Gibbsian setting and the symmetrized Sinkhorn risk.

\medskip

\noindent
{\bf Complete proof of Theorem~\ref{stability_data}.} Denote the risk functionals
\[
\Phi(\boldsymbol{\theta}; g)
=
\mathcal D_{\sigma,\varepsilon}^{2}
\bigl(
f(\boldsymbol{\theta}),g
\bigr),
\qquad
\Phi(\boldsymbol{\theta}; g')
=
\mathcal D_{\sigma,\varepsilon}^{2}
\bigl(
f(\boldsymbol{\theta}),g'
\bigr),
\]
with corresponding normalizing constants
\[
Z = \int_{\Theta} \exp\big(-\lambda \, \Phi(\boldsymbol{\theta}; g)\big) \, \Pi_0(d\boldsymbol{\theta}), \quad Z' = \int_{\Theta} \exp\big(-\lambda \, \Phi(\boldsymbol{\theta}; g')\big) \, \Pi_0(d\boldsymbol{\theta}),
\]
and define the posterior densities with respect to the prior measure $\Pi_0$ by
\[
p(\boldsymbol{\theta})
=
\frac{
\exp(-\lambda\,\Phi(\boldsymbol{\theta};g))
}{Z},
\qquad
p'(\boldsymbol{\theta})
=
\frac{
\exp(-\lambda\,\Phi(\boldsymbol{\theta};g'))
}{Z'}.
\]
The squared Hellinger distance between $\Pi$ and $\Pi'$ is
\[
d_{\mathrm H}(\Pi,\Pi')^2
=
\int_\Theta
\left(
\sqrt{p(\boldsymbol{\theta})}
-
\sqrt{p'(\boldsymbol{\theta})}
\right)^2
\,\Pi_0(d\boldsymbol{\theta}).
\]
To estimate this quantity, we write
\[
\sqrt{p(\boldsymbol{\theta})}
-
\sqrt{p'(\boldsymbol{\theta})}
=
\frac{
\sqrt{\exp(-\lambda\,\Phi(\boldsymbol{\theta};g))}
}{\sqrt Z}
-
\frac{
\sqrt{\exp(-\lambda\,\Phi(\boldsymbol{\theta};g'))}
}{\sqrt{Z'}}.
\]
Adding and subtracting $\sqrt{\exp(-\lambda \Phi(\boldsymbol{\theta}; g))}/\sqrt{Z'}$ yields
\[
\sqrt p-\sqrt{p'}
=
\frac{
\sqrt{\exp(-\lambda\,\Phi(g))}
-
\sqrt{\exp(-\lambda\,\Phi(g'))}
}{\sqrt{Z'}}
+
\sqrt{\exp(-\lambda\,\Phi(g))}
\left(
\frac1{\sqrt Z}
-
\frac1{\sqrt{Z'}}
\right),
\]
where we abbreviate $\Phi(\boldsymbol{\theta};g)$ by $\Phi(g)$ and similarly for $\Phi(g')$.

We estimate the two contributions separately. 
First, since the function $x \mapsto \exp(-\lambda x/2)$ is globally Lipschitz on bounded intervals, and since $\Phi$ is uniformly bounded by Theorem~\ref{well-defined}, there exists a constant $C_\lambda>0$ such that
\[
\left|
\sqrt{\exp(-\lambda\,\Phi(g))}
-
\sqrt{\exp(-\lambda\,\Phi(g'))}
\right|
\le
C_\lambda
\left|
\Phi(g)-\Phi(g')
\right|.
\]
Moreover, by Corollary~\ref{cor:sym_risk_stability},
\[
\left|
\Phi(\boldsymbol{\theta};g)
-
\Phi(\boldsymbol{\theta};g')
\right|
=
\left|
\mathcal D_{\sigma,\varepsilon}^{2}
\bigl(
f(\boldsymbol{\theta}),g
\bigr)
-
\mathcal D_{\sigma,\varepsilon}^{2}
\bigl(
f(\boldsymbol{\theta}),g'
\bigr)
\right|
\le
C_\varepsilon
\,
\|g-g'\|_{\dot H^{-1}(X)}
\]
uniformly for all $\boldsymbol{\theta}\in\Theta$. 
Therefore,
\[
\left|
\sqrt{\exp(-\lambda\,\Phi(g))}
-
\sqrt{\exp(-\lambda\,\Phi(g'))}
\right|
\le
C_\lambda C_\varepsilon
\,
\|g-g'\|_{\dot H^{-1}(X)}.
\]
Squaring and integrating yields
\[
\int_\Theta
\left(
\frac{
\sqrt{\exp(-\lambda\,\Phi(g))}
-
\sqrt{\exp(-\lambda\,\Phi(g'))}
}{\sqrt{Z'}}
\right)^2
\Pi_0(d\boldsymbol{\theta})
\le
\frac{
C_\lambda^2 C_\varepsilon^2
}{Z'}
\,
\|g-g'\|_{\dot H^{-1}(X)}^2.
\]

We next estimate the contribution arising from the normalization constants. Using
\[
\left|
\frac1{\sqrt Z}
-
\frac1{\sqrt{Z'}}
\right|
\le
\frac{
|Z-Z'|
}{
2\,\min(Z,Z')^{3/2}
},
\]
it suffices to bound $|Z-Z'|$. 
Using the mean value theorem,
\begin{align*}
|Z-Z'|
&=
\left|
\int_\Theta
\Bigl(
\exp(-\lambda\,\Phi(g))
-
\exp(-\lambda\,\Phi(g'))
\Bigr)
\,\Pi_0(d\boldsymbol{\theta})
\right| \\
&\le
\lambda
\int_\Theta
\left|
\Phi(g)-\Phi(g')
\right|
\max\!\Bigl(
\exp(-\lambda\,\Phi(g)),
\exp(-\lambda\,\Phi(g'))
\Bigr)
\Pi_0(d\boldsymbol{\theta}).
\end{align*}
Since the exponential factors are bounded above by one and
\[
\left|
\Phi(g)-\Phi(g')
\right|
\le
C_\varepsilon
\,
\|g-g'\|_{\dot H^{-1}(X)},
\]
we obtain
\[
|Z-Z'|
\le
\lambda C_\varepsilon
\,
\|g-g'\|_{\dot H^{-1}(X)}.
\]

Furthermore, Theorem~\ref{well-defined} implies that $\Phi$ is uniformly bounded on $\Theta$. Hence there exists $M<\infty$ such that
\[
0 \le \Phi(\boldsymbol{\theta};g),
\Phi(\boldsymbol{\theta};g')
\le M
\qquad
\forall \boldsymbol{\theta}\in\Theta.
\]
Consequently,
\[
Z,Z'
\ge
e^{-\lambda M}
\Pi_0(\Theta)
=
e^{-\lambda M},
\]
so both normalization constants are bounded below by a strictly positive constant independent of the perturbation size. 
Therefore,
\[
\left|
\frac1{\sqrt Z}
-
\frac1{\sqrt{Z'}}
\right|
\le
C
\,
\|g-g'\|_{\dot H^{-1}(X)}
\]
for some constant $C>0$.

Since $\sqrt{\exp(-\lambda\,\Phi(g))}
\le 1$, the second contribution to the Hellinger distance satisfies
\[
\int_\Theta
\left(
\sqrt{\exp(-\lambda\,\Phi(g))}
\left(
\frac1{\sqrt Z}
-
\frac1{\sqrt{Z'}}
\right)
\right)^2
\Pi_0(d\boldsymbol{\theta})
\le
C
\,
\|g-g'\|_{\dot H^{-1}(X)}^2.
\]

Combining the two estimates and absorbing all constants into a generic constant $C>0$ yields
\[
d_{\mathrm H}(\Pi,\Pi')^2
\le
C
\,
\|g-g'\|_{\dot H^{-1}(X)}^2.
\]
Taking square roots gives
\[
d_{\mathrm H}(\Pi,\Pi')
\le
C
\,
\|g-g'\|_{\dot H^{-1}(X)},
\]
which establishes the claimed robustness estimate.

\bibliographystyle{unsrt}
\bibliography{refs_motamed}

\end{document}